\documentclass[a4paper,reqno]{amsart}

\usepackage{amsmath}
\usepackage{amssymb}
\usepackage{amsfonts}
\usepackage{amsthm}
\usepackage{mathrsfs}
\usepackage{mathtools}
\usepackage{hyperref}
\usepackage{enumerate}
\usepackage{tikz}

\hypersetup{psdextra,unicode,colorlinks=true}
\mathtoolsset{showonlyrefs=true}
\numberwithin{equation}{section}
\allowdisplaybreaks

\newtheorem{thm}{Theorem}[section]
\newtheorem{lem}[thm]{Lemma}
\newtheorem{prop}[thm]{Proposition}
\newtheorem{coro}[thm]{Corollary}
\newtheorem{defi}[thm]{Definition}
\newtheorem{remark}[thm]{Remark}

\newcommand{\me}{\mathrm{e}}
\newcommand{\mi}{\mathrm{i}}
\newcommand{\dif}{\mathop{}\!\mathrm{d}}

\begin{document}
\title{Symmetric hyperbolic Schr\"{o}dinger equations on tori}

\author{Baoping Liu}
\address{Department of Mathematics, School of Mathematical Sciences\\
	Peking University\\
	Beijing\\ China}
\email{baoping@math.pku.edu.cn}

\author{Xu Zheng}
\address{Department of Mathematics, School of Mathematical Sciences\\
	Peking University\\
	Beijing\\ China}
\email{xuzheng-math@stu.pku.edu.cn}

\thanks{2020 \textit{AMS Mathematics Subject Classification}.  35Q55.}
\thanks{Keywords:   Hyperbolic Schr\"{o}dinger equation, Strichartz estimate, Davey-Stewartson system, local well-posedness.}
\thanks{The authors are supported by the NSF of China (No. 12571254, 12341102).}
\begin{abstract}

	In this paper, we study the symmetric hyperbolic Schr\"{o}dinger equations in the periodic setting.

	First, we prove  full range Strichartz estimates on  general tori by adapting Bourgain's major arc method~\cite{Bourgain1993}. The result is sharp for rational tori.

	Second, on two-dimensional rational tori, we establish optimal local well-posedness for two hyperbolic nonlinear Schrödinger (HNLS) equations: the septic HNLS and the hyperbolic-elliptic Davey–Stewartson system.

\end{abstract}
\maketitle

\section{Introduction}

The Strichartz estimate has been a key tool in the study of nonlinear Schr\"{o}dinger equations for a long time.
The Strichartz estimate for elliptic  Schr\"{o}dinger equations on Euclidean domain was established by \cite{Strichartz1977, Velo} and extended to endpoint cases by Keel and Tao~\cite{Keel-Tao1998}. The standard arguments are based on $TT^*$ method and dispersive estimates. On non-Euclidean domains one usually faces the problem of worse dispersive estimates.

Bourgain~\cite{Bourgain1993} initiated the study of Strichartz estimates on tori $\mathbb{T}^d = (\mathbb{R}/2\pi\mathbb{Z})^d$ via techniques from analytic number theory and proved that
\begin{equation}\label{Bourgain 1993 Strichartz estimate}
	\| P_{\leq N} \me^{\mi t\Delta} \phi \|_{{L_{t,x}^{p}([0,1]\times \mathbb T^{d})}} \lesssim N^{\frac{d}{2} - \frac{d+2}{p}} \| \phi \|_{L^2_x(\mathbb T^d)}, \quad \forall p\geq \frac{2(d+4)}d,
\end{equation}
where $P_{\leq N}$ denotes the Littlewood-Paley projection operator. Later, by Fourier decoupling method, Bourgain-Demeter~\cite{bour2015} proved the full range Strichartz estimates on general $d$-dimensional tori with extra $N^\varepsilon$ loss
\begin{equation}\label{Bourgain-Demeter Strichartz estimate by decoupling}
	\| P_{\leq N} \me^{\mi t\Delta} \phi \|_{{L_{t,x}^{p}([0,1]\times \mathbb T^{d})}} \lesssim_\varepsilon N^{\frac{d}{2} - \frac{d+2}{p}+\varepsilon } \| \phi \|_{L^2_x(\mathbb T^d)}, \quad \forall p\geq \frac{2(d+2)}d,\ \forall \varepsilon>0.
\end{equation}
The  loss in \eqref{Bourgain-Demeter Strichartz estimate by decoupling} was  removed by Killip-Vi\c{s}an~\cite{Killip2016} for $p>2(d+2)/d$. Recently, Herr-Kwak~\cite{Herr-Kwak} proved the sharp  endpoint $L^4$ estimate on $\mathbb T^2$  via incidence geometry
\begin{equation}
	\| P_{\leq N} \me^{\mi t\Delta} \phi \|_{L^4_{t,x}([0,1]\times \mathbb T^2)} \lesssim (\log N)^{1/4} \| \phi \|_{L^2_x(\mathbb T^2)}.
\end{equation}

Other geometric settings have also been extensively studied in recent years.  Takaoka-Tzvetkov~\cite{Takaoka-Tzvetkov2002} established a lossless $L^4$ Strichartz estimate on $\mathbb{R}\times\mathbb{T}$.  Burq-G\'{e}rard-Tzvetkov~\cite{BGT2004} proved Strichartz estimates with fractional loss of derivatives on any compact Riemannian manifold. Huang-Sogge~\cite{Sogge2} obtained optimal estimate  on Zoll manifold.   See also~\cite{Sogge1, Sogge3} and references therein for results in other settings.

Now we turn our attention to the hyperbolic Schr\"{o}dinger equation
\begin{equation}\label{LHLS}
	(\mi \partial_t + \Box ) u =0,  \end{equation}
with $\Box = \theta_1\partial^2_{x_1}+\dots+\theta_v\partial^2_{x_v}-\theta_{v+1}\partial^2_{x_{v+1}}-\dots-\theta_d\partial^2_{x_d}$, $ \theta_j\in (0,1], 1\leq v\leq d/2.$~\footnote{As in~\cite{Killip2016}, the general flat tori are considered as $\mathbb{R}^d/(L_1\mathbb{Z}_1\times L_2\mathbb{Z}_2\times \dots \times L_d\mathbb{Z}_d)$ with $L_j\in \mathbb{R}^{+}$. But we can incorporate the geometry into the  operator by taking $\theta_j=L_j^{-2}$ in  the definition of $\Box$ and considering the problem on the standard torus. We can further restrict to $\theta_j\in (0,1]$ by a change of variable in time.}\label{footnote1}

In \cite{bourgain2017}, Bourgain-Demeter proved the estimate on $\mathbb{T}^d$\begin{equation}\label{Bourgain-Demeter hyperbolic Strichartz estimate by decoupling}
	\| P_{\leq N} \me^{\mi t\Box} \phi \|_{L^p_{t,x}([0,1]\times \mathbb T^d)} \lesssim_\varepsilon N^{\beta_{d,v}(p)+\varepsilon} \| \phi \|_{L^2_x(\mathbb T^d)}, \quad \forall p\geq 2,\ \forall \varepsilon>0,
\end{equation}
here
\begin{equation}
	\beta_{d,v}(p) = \max\left\{ \frac{d}{2} - \frac{d+2}{p}, \frac v2-\frac vp\right\}.\label{definition of powerindex}
\end{equation}
Notice that estimate \eqref{Bourgain-Demeter hyperbolic Strichartz estimate by decoupling} is weaker than the elliptic case \eqref{Bourgain-Demeter Strichartz estimate by decoupling} when $p<\frac{2(d-v+2)}{d-v}$. In particular, for the square torus, this is due to the fact that the hyperbolic paraboloid contains a vector
subspace of dimension $v$.

The extra $N^\varepsilon$ loss in~\eqref{Bourgain-Demeter hyperbolic Strichartz estimate by decoupling} was removed by \cite{Başakoğlu-Wang2025-3} in the partial range $p>\frac{2(d-v+2)}{d-v}$, following the argument of Killip-Vi\c{s}an~\cite{Killip2016}; and by \cite{LZ2025} in the full range $p\geq 2$ for $d=3,\ \theta_j=1$ via incidence geometry approach, motivated by~\cite{Herr-Kwak}.  See also earlier result of Godet-Tzvetkov \cite{Tzvetkov} and Wang \cite{wang2013},  where the $L^4$ Strichartz estimate with $1/4$-derivative loss on square tori $\mathbb{T}^2$ was obtained
using different methods.

Here we consider the case $d$ is even and $v=\frac{d}{2}$, i.e. the symmetric hyperbolic Schr\"{o}dinger equation,  and   remove the $N^\varepsilon$-loss in~\eqref{Bourgain-Demeter hyperbolic Strichartz estimate by decoupling} in the full range on general tori.  Our first main result reads as follows:
\begin{thm}\label{thm:main sharp Strichartz estimate on d-dimensional tori}
	For $v=\frac{d}{2}$ and any $\phi\in {L^2_{x}(\mathbb T^{d})}$, we have that
	\begin{equation}\label{est:main sharp Strichartz estimate on d-dimensional tori}
		\| P_{\leq N} \me^{\mi t\Box} \phi \|_{{L_{t,x}^{p}([0,1]\times \mathbb T^{d})}} \lesssim \big( N^{\frac d2-\frac{d+2}p} + N^{\frac d4-\frac d{2p}} \big) \| \phi \|_{{L^2_{x}(\mathbb T^{d})}},\quad \forall p\geq 2.
	\end{equation}
\end{thm}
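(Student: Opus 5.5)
The plan is to adapt Bourgain's major arc method~\cite{Bourgain1993} to the symbol $Q(n)=\sum_{j\le v}\theta_j n_j^2-\sum_{j>v}\theta_j n_j^2$ with $v=d/2$. The two terms in \eqref{est:main sharp Strichartz estimate on d-dimensional tori} cross at the critical exponent $p_0=\frac{2(d+4)}{d}$: there $\frac d2-\frac{d+2}{p_0}=\frac d4-\frac{d}{2p_0}=\frac{d}{d+4}$. For $p\ge p_0$ the first term dominates, and for $2\le p\le p_0$ the second does. Since $p=2$ is trivial, interpolation reduces everything to two claims: the lossless endpoint estimate at $p=p_0$, and the supercritical range $p>p_0$. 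The latter should follow from the $p_0$ bound (or a level-set version of it) by the Killip--Vi\c{s}an style argument, as in \cite{Başakoğlu-Wang2025-3}, together with Bernstein. The heart of the matter is therefore a distributional estimate for $F=P_{\le N}\me^{\mi t\Box}\phi$ with $\|\phi\|_2=1$. Concretely, we aim to show
\[
|\{(t,x):|F|>\lambda\}|\lesssim N^{\frac{d+4}{2}\cdot\frac{d}{d+4}\cdot\frac{2(d+4)}{d}-\ldots}\ \text{(i.e. the scaling-correct bound } \lambda^{-p_0}N^{\frac{d}{d+4}p_0})\quad\text{for }\lambda\gtrsim N^{d/4},
\]
while smaller $\lambda$ are handled by a $TT^*$ argument with an $L^q$, $q<p_0$, bound carrying the $\frac d4-\frac d{2q}$ loss.

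For the large-level estimate we follow Bourgain's argument. Write $|\{|F|>\lambda\}|\le\lambda^{-2}\langle K_N*f,f\rangle$, where $f=\mathbf 1_{|F|>\lambda}F/|F|$ and $K_N(t,x)=\sum_{|n|\le N}\me^{\mi(n\cdot x+tQ(n))}$ is the kernel. The kernel factorizes into a product of one-dimensional Weyl sums $\prod_j\sum_{|n_j|\le N}\me^{\mi(n_jx_j\pm t\theta_jn_j^2)}$; the signs are irrelevant for one-dimensional sums up to complex conjugation. Hence the standard Weyl/Dirichlet bound $|K_N(t,x)|\lesssim\prod_j \frac{N}{\sqrt q(1+N\|\theta_j t/(2\pi)-a/q\|^{1/2})}$ holds near major arcs. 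For irrational $\theta_j$ the arcs differ across coordinates, but each factor is individually bounded by the one-dimensional major/minor arc estimate (Dirichlet approximation applied to $\theta_jt$ separately). We then decompose $K_N=K_1+K_2$ in time with a Farey dissection at scale $M\sim N^{2}/\lambda^{\text{power}}$. The major-arc part $K_1$ is bounded in $L^\infty$ by $\ll\lambda^2$, and the remaining part $K_2$ is bounded on the Fourier side via divisor-type bounds for $\sum_{q\sim Q}\sum_a \me^{\mi ka/q}$. Because only product structure and one-dimensional sums enter, the hyperbolic signature plays no role at this stage, exactly as in the elliptic case.

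The small-$\lambda$ regime is where the hyperbolic loss $N^{\frac d4-\frac d{2p}}$ must appear, and it is sharp because of the $v$-dimensional null subspaces $n_j=\pm n_{j+v}$ in the rational case. Here we plan to pair coordinates $(x_j,x_{j+v})$. Each hyperbolic pair $\theta n_1^2-\theta' n_2^2$ may be compared with products of lower-dimensional elliptic estimates, or handled directly via the $L^4$ bound obtained by counting solutions of $Q(a)-Q(b)=\tau$. The key counting estimate is that, for fixed $\tau$, the number of $a,b$ in a ball of radius $N$ with $a-b=\xi$ and the resulting linear condition is $\lesssim N^{d-1}$ generically but $N^{d/2}$-type on null directions. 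We expect the main obstacle to be matching these two regimes without $\varepsilon$ loss for general (possibly irrational) $\theta_j$, i.e. proving the minor-arc/large-$q$ Fourier bound uniformly in $\theta$. Our first attempt will be to use only the product structure and the one-dimensional Weyl bounds coordinatewise, summing the geometric series in $q$ dyadically so that no divisor-function loss survives once $\lambda\gg N^{d/4}$. Below that threshold we fall back on the trivial interpolation with $L^2$ and the $N^{\frac d4}$ bound at $p=\infty$-scaling.
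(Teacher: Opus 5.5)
There is a genuine gap. Your outer architecture matches the paper's: reduce to a lossless bound at $p_0=\frac{2(d+4)}{d}$, then interpolate with the trivial $L^2$ bound and the Bernstein bound $N^{d/2}$ at $L^\infty$. Both target exponents are affine in $1/p$, so Riesz--Thorin alone covers $p>p_0$ and no Killip--Vi\c{s}an step is needed.

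\textbf{The missing $p_0$ bound.} The $p_0$ bound itself is never established, and the mechanism you suggest for avoiding the divisor loss does not work. On the major arcs the time weight has Fourier coefficients $|\hat\Theta_{Q,T}(n)|\lesssim QT\,d(n;Q)$, with $|n|\lesssim T^{-1}\le N^2$. Suppose you bound the divisor factor uniformly, say by $\min\{Q,\max_{|n|\lesssim N^2}d(n)\}$. Then at $p=r=p_0$ every dyadic $Q$ below $\max_{|n|\lesssim N^2}d(n)$ contributes a full $N^{2d/(d+4)}$. Summing dyadically in $q$ therefore leaves a factor that grows with $N$.

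The missing idea is Bourgain's: large values of $d(n;Q)$ are rare (Lemma~\ref{lem:distribution}). Feeding this into restricted-type estimates for characteristic functions, followed by an atomic decomposition, gives for $p>2$
$|\langle f,g*\Lambda_{Q,T}\rangle|\lesssim Q^2T\|f\|_{L^1}\|g\|_{L^1}+Q^{2(1+\varepsilon)/p}T^{2/p}\|f\|_{L^{p'}}\|g\|_{L^{p'}}$
(Lemma~\ref{lem:interpolated time convolution estimate}). The $\varepsilon$ now rides on $Q$ rather than $N$. It is harmless because the net power of $Q$ at $p=r=p_0$ is $-\frac{d}{d+4}+O(\varepsilon)$.

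\textbf{Irrational $\theta_j$.} You name the misaligned arcs as the main obstacle but do not resolve it. The paper uses the arithmetic--geometric mean inequality $\prod_j X_j(\theta_jt)\lesssim\sum_j X_j(\theta_jt)^d$. This replaces the product kernel by a sum of one-variable weights $\Lambda_N(\theta_jt)$, each handled by rescaling time.

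With these two ingredients, plain $TT^*$ and Lemma~\ref{lem:full time convolution estimate} at $r=p=p_0$ give the two terms $N^{d(\frac12-\frac1p)}$ and $N^{2(d(\frac12-\frac1p)-\frac2p)}$. These coincide exactly at $p_0$, so no level sets are needed.

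\textbf{Problems with the level-set route as you set it up.}
\begin{itemize}
\item A weak-type bound $|\{|F|>\lambda\}|\lesssim\lambda^{-p_0}N^2$ on $N^{d/4}\lesssim\lambda\lesssim N^{d/2}$ integrates to $N^2\log N$. Even if proved, it costs $(\log N)^{1/p_0}$. You would need the scaling-correct bound $\lambda^{-q}N^{\frac d2q-(d+2)}$ for some $q<p_0$ in that range. The range $\lambda\lesssim N^{d/4}$ is fine with $L^2$ alone, since $\frac d4(p_0-2)=2$.
\item The roles of $K_1$ and $K_2$ are reversed. The major-arc piece has sup norm of order $N^d\ge\lambda^2$ near $t=0$. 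It is the minor-arc piece, bounded by $(N^2/M)^{d/2}$, that gets absorbed into $\lambda^2|\Omega|^2$; this is exactly where $M\le N$, i.e.\ $\lambda\gtrsim N^{d/4}$, enters. The major-arc piece is the one that needs the divisor analysis above.
\item Your starting inequality should read $\lambda^2|\Omega|^2\lesssim\langle K_N*f,f\rangle$.
\item The coordinate pairing and hyperbolic point counting in your third paragraph play no role. The signature is invisible in the kernel bounds, which coincide with the elliptic ones. The $N^{\frac d4-\frac d{2p}}$ term comes purely from interpolating the $p_0$ bound with $L^2$.
\end{itemize}
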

\begin{remark}
	Note that when  $d$ is even, and $v=\frac{d}{2}$,
	the turning point $B$ in the following picture corresponds to
	\[p=\frac{2(d-v+2)}{d-v}= \frac{2(d+4)}{d},\]
	which coincides exactly with the exponent in \eqref{Bourgain 1993 Strichartz estimate}.  This observation motivates us to employ the major arc approach from the seminal paper~\cite{Bourgain1993}.

	We will thus focus our proof for $p=\frac{2(d+4)}{d}$, and the general case follows from
	interpolation with $L^2$ and $L^\infty$ estimates.
\end{remark}
\begin{figure}[htbp]
	\begin{tikzpicture}[xscale=3,yscale=2]
		\draw[->] (0,0) -- (1,0);
		\draw[->] (0,0) -- (0,3/2);
		\draw (0,1) -- (1/4,1/4);
		\draw (1/4,1/4) -- (1/2,0);
		\draw[dashed] (0,1/2) -- (1/4,1/4);
		\draw[dashed] (1/3,0) -- (1/4,1/4);
		\node[below] at (1,0) {$\frac1p$};
		\node[left,font=\small] at (0,3/2) {$\beta_{d,v}(p)$};
		\node[below] at (1/2,0) {$\frac 12$};
		\node[above, font=\tiny] at (1/2,0) {C};
		\node[above, font=\tiny] at (1/3,1/6) {B};
		\node[below] at (1/4,0) {$\frac{d}{2(d+2)}$};
		\node[left] at (0,1/2) {$\frac v2$};
		\node[left] at (0,1) {$\frac d2$};
		\node[right,font=\tiny] at (0,1) {A};
		\foreach \x in {1/3,1/2}{\draw (\x,0) -- (\x,0.03); }
		\foreach \y in {1/2,1}{\draw (0,\y) -- (0.02,\y); }
	\end{tikzpicture}
	\caption{Relation between $\beta_{d,v}(p)$ and $p$}
	\label{figure: relation between power index and p}
\end{figure}
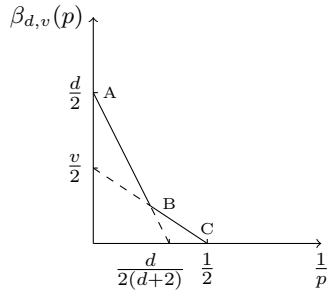

Next,  we consider the Cauchy problem for nonlinear hyperbolic Sch\"{o}dinger equation (HNLS)
\begin{equation}
	\mi\partial_t u + \Box u = \pm |u|^{2k}u,\quad (t,x)\in \mathbb{R}\times \mathbb{T}^d,
	\label{HNLS on Td}
\end{equation}

HNLS  serves as a fundamental model in multiple physics contexts, such as plasma waves~\cite{Plasma-1,Plasma-2,Plasma-3, Plasma-4} and gravity water waves~\cite{Craig, D-S, Saut,Totz2015cmp}.
We note that the symmetric hyperbolic Schr\"{o}dinger equation appears not only in two dimensions~\cite{Totz2015cmp} but also in higher dimensions. Examples include its emergence from the Boltzmann equation via the Wigner transform~\cite{Natasa1, Natasa2}, or applying  Fourier transform in the velocity variable — albeit in this case yielding $(\mi\partial_t +\nabla_{\xi}\cdot\nabla_x)\tilde{f}=0$~\cite{Chen_2024,Chenxuwen2,Chenxuwen3,Wangyuzhao2024}.

The mathematical problems on HNLS equations were first addressed by Ghidaglia and Saut in \cite{Ghidaglia-Saut1993,Ghidaglia-Saut1996}.
We refer the readers to the survey paper by Saut-Wang~\cite{Saut2024hnls} for more details.

In the study of \eqref{HNLS on Td} on Euclidean domain, the critical Sobolev index $s_c=\frac d2-\frac 1k$ is of particular interest, since the scaling symmetry $u \mapsto \lambda^{1/k} u(\lambda^2t,\lambda x)$ leaves the Sobolev norm $\| \cdot \|_{\dot H^{s_c}(\mathbb{R}^d)}$ invariant.  Since the dispersive estimate and Strichartz estimates are the same as the elliptic Schr\"{o}dinger equation, sharp local well-posedness with data in $H^{s_c}(\mathbb{R}^d)$  follows directly from the Strichartz estimate.
The Cauchy problem for HNLS on  tori has been studied in~\cite{wang2013, LZ2025, Başakoğlu-Wang2025-3}. We quickly summarize the results (when equation is local well-posedness for data in $H^s(\mathbb{T}^d)$, we shorten the notation by saying LWP in $s$. Also notice for hyperbolic equation we always have $1\leq v\leq \frac{d}{2}$.)

\begin{itemize}
	\item Subcritical results:  LWP for $s>s_c$ when $(d,v, k)\not=(2,1, 1)$;  LWP for $s>\frac12$ when $(d,v, k)=(2,1, 1)$.
	\item Critical results: LWP for $s\geq s_c$ when    $(d,v, k) \neq (2,1,1)$, $(2,1, 2)$, $(2,1,3)$, $(3,1,1)$, $(4,2,1)$.
\end{itemize}
The above results hold for general tori~\cite{Başakoğlu-Wang2025-3}, though in some cases the Cauchy problem  was considered first on square torus~\cite{wang2013, LZ2025}.

On rational tori,  we also have ill-posedness in the sense that the first Picard iteration is unbounded.
\begin{itemize}
	\item $(d,v,k)=(2,1,1)$:  ill-posedness for $s=\frac12$~\cite{LZ2025}, see also~\cite{wang2013} for $s<\frac12$.
	\item $(d,v,k)=(3,1,1)$:  ill-posedness for $s=s_c$~\cite{LZ2025}.
\end{itemize}

Here  we deal with all the leftover cases at critical regularity on rational tori by proving that
\begin{thm} \label{thm:cauchyProblemHNLS} For initial data $u_0\in H^{\frac d2-\frac1k}(\mathbb{T}^d)$, the Cauchy problem for HNLS $\eqref{HNLS on Td} $ $(\mbox{with all } \theta_j=1)$ is locally well-posedness for $(d,v, k)=(2,1, 3)$ and ill-posedness for $(d,v, k)=(2,1, 2)$ or $(4,2, 1)$ in the sense that the solution flow is not $C^{2k+1}$ at the origin.
\end{thm}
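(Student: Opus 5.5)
The well-posedness and ill-posedness parts need different arguments, so I treat them separately.

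\emph{Well-posedness for $(d,v,k)=(2,1,3)$.} Here $s_c=1-\frac13=\frac23$ and the nonlinearity is septic. I will run a contraction argument in the critical spaces $X^{s}$, $Y^{s}$ built on $U^2/V^2$ adapted to $\me^{\mi t\Box}$, as in \cite{Herr-Kwak} and \cite{LZ2025}. The input is Theorem~\ref{thm:main sharp Strichartz estimate on d-dimensional tori} with $d=2$, $v=1$. It gives lossless estimates
\[
\|P_{\le N}\me^{\mi t\Box}\phi\|_{L^p_{t,x}}\lesssim N^{1-\frac4p}\|\phi\|_{L^2},\qquad p\geq \frac{2(d+4)}{d}=6,
\]
on the scale-invariant part of the range, while for $p<6$ the loss is $N^{\frac12-\frac1p}$. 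The first step is to transfer these bounds to $U^p$ atoms and to localize them to cubes $C$ of side $M\le N$ via Galilean invariance. The second step is the multilinear estimate
\[
\Big\|\prod_{j=1}^{8}P_{N_j}u_j\Big\|_{L^1}\lesssim \Big(\frac{N_8}{N_1}+\frac1{N_2}\Big)^{\delta}\prod_{j\geq 2} N_j^{2/3}\ldots,
\]
with $N_1\geq N_2\ge\cdots$. I will prove it by placing the two highest frequency factors in $L^6$ after cube decomposition, and the remaining six factors in $L^p$ with $p$ slightly above $6$ (where there is a gain) or in $L^\infty$ via Bernstein. The $\delta$-gain needed for summation in the critical case comes from interpolating the $U^6$ bound with the $V^2$ bound at $p>6$. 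The main obstacle is the case with two comparable high frequencies, where the endpoint $L^6$ gives no gain. There I will first try a bilinear refinement: decompose into cubes of side $N_2$, then use orthogonality together with strict $p>6$ on the lower-frequency factors. Once the multilinear estimate is in hand, the contraction argument for small data and critical LWP is standard.

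\emph{Ill-posedness for $(2,1,2)$ and $(4,2,1)$.} For $(2,1,2)$ we have $s_c=\frac12$; for $(4,2,1)$ we have $s_c=1$. In both cases I will show that the $(2k+1)$-th Picard iterate
\[
A(u_0)(t)=\int_0^t \me^{\mi(t-s)\Box}\big(|\me^{\mi s\Box}u_0|^{2k}\me^{\mi s\Box}u_0\big)\dif s
\]
is unbounded from $H^{s_c}$ to $H^{s_c}$. This rules out a $C^{2k+1}$ flow map. The construction exploits the vector subspace inside the hyperbolic paraboloid: on the line $\xi_1=\xi_2$ in $\mathbb T^2$ (respectively the two-dimensional null subspace $\{\xi_1=\xi_3,\ \xi_2=\xi_4\}$ in $\mathbb T^4$) the symbol vanishes.

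Take $u_0=N^{-\gamma}\sum_{n\in S}\me^{\mi n\cdot x}$, where $S$ is the set of lattice points on the null subspace with $|n|\sim N$, normalized in $H^{s_c}$. The nonlinearity is then resonant: every interaction among modes in $S$ yields zero phase. Hence $A(u_0)(t)=t\,|u_0|^{2k}u_0$ exactly. Its $H^{s_c}$ norm is computed by counting representations $n=n_1-n_2+\cdots$ within $S$; this is a lattice-convolution count in dimension $v$ giving size $\sim N^{v\cdot 2k}$ per output mode. I expect the count to show the ratio $\|A(u_0)\|_{H^{s_c}}/\|u_0\|_{H^{s_c}}^{2k+1}$ growing like a positive power of $N$ or $\log N$ in these two borderline cases, as for $(3,1,1)$ in \cite{LZ2025}. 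If the pure-subspace count turns out merely bounded, I will thicken $S$ to a slab of width $O(1)$ around the subspace, which keeps the phases $O(1)$ for $t\sim1$, and to add a logarithmic factor I will use dyadically many frequency scales summed with weights (a Besov-type superposition). This ill-posedness count, particularly getting divergence exactly at $s_c$, is the second delicate step.
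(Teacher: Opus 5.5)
Your well-posedness argument has a genuine gap. For $d=2$, Theorem~\ref{thm:main sharp Strichartz estimate on d-dimensional tori} gives $N^{1-4/p}$ for $p\ge 6$. That is exactly the scaling bound, so there is no ``gain slightly above $6$''. For $p<6$ the bound is $N^{\frac12-\frac1p}$, worse than scaling, because of the null lines $\ell^s_\pm$.

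\emph{Strichartz alone reaches scaling but does not sum.} The only lossless H\"older choice in a trilinear product is $L^6\times L^6\times L^6$. After cutting $u_1$ into cubes of side $N_2$, this gives $\|u_1u_2u_3\|_{L^2}\lesssim N_2^{2/3}N_3^{1/3}\prod\|u_i\|_{Y^0}$. Use the paper's labelling, with $u_0$ at the top frequency and $N_1\ge\cdots\ge N_7$, and split the octilinear form into two such trilinear factors plus an $L^\infty$ factor. You get $N_2^{2/3}N_3^{2/3}N_4^{1/3}N_5^{1/3}N_6N_7$. This is scale-invariant but has no decay in $N_3/N_2$ or $N_4/N_3$, so the dyadic sums do not close in $\ell^2$ at $s=2/3$. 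This is exactly the obstruction recorded in the remark after Lemma~\ref{trilinear-Y}.

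\emph{Your own arrangement fails even the scaling bound.} Cutting both top factors at the scale of the third frequency and putting them in $L^6$ already spends that frequency's whole $2/3$-derivative budget. The third factor must then go into $L^2$, since any $p>2$ costs a further positive power of its frequency. The remaining five factors are then left with total H\"older weight $1/6$, which loses a full derivative when all lower frequencies are comparable.

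\emph{Neither interpolation nor orthogonality supplies the gain.} The $U^6$/$V^2$ interpolation (Proposition~\ref{prop:interpolation}) only transfers an estimate to $V^2$ at logarithmic cost; it cannot create a power gain. Such a gain must come from an estimate that beats the Strichartz distribution of derivatives. ``Orthogonality'' after a cube decomposition is only the spatial-frequency orthogonality already used above. A Herr--Tataru--Tzvetkov style time--frequency strip argument would need a gain for $\me^{\mi t\Box}$ on strips. Near the diagonals these strips are unions of null-line segments, and you have not formulated such an estimate.

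\emph{The missing idea is the diagonal/off-diagonal splitting of Definition~\ref{def:off-diagonal}.} The off-diagonal part $\widetilde{\mathscr J}(u_1,u_2)$ behaves like an elliptic product. The resonance function factors as $2(\xi_1-\xi_4)^tA(\xi_1-\xi_2)$ with $A=\mathrm{diag}(1,-1)$. Together with the divisor bound for lattice points on hyperbolas, this gives $\|\widetilde{\mathscr J}(u_1,u_2)\|_{L^2}\lesssim N^{\varepsilon}$. Combined with the level-set estimate, it yields the lossless bound $N^{2-4/p}$ for every $p>2$ (Lemma~\ref{lem:refinedLp}). The diagonal part has Fourier support on lines. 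It is bounded by $N_2^{1/2}N_3^{1/2}$ using only $L^\infty_tL^2_x$ and orthogonality across parallel lines.

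Together these give $\|u_1u_2u_3\|_{L^2}\lesssim N_2^{1/2}N_3^{1/2}\prod\|u_i\|_{Y^0}$, better than the Strichartz bound by $(N_3/N_2)^{1/6}$. The pairing $\|u_0u_2u_4\|_{L^2}\|u_1u_3u_5\|_{L^2}\|u_6u_7\|_{L^\infty}$ then produces the summable factor $(N_0/N_1)^{2/3}(N_2N_3N_4N_5)^{-1/6}(N_6N_7)^{1/3}$. Without an ingredient of this kind, your contraction argument does not close.

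\emph{Ill-posedness.} Your first expectation is wrong. Single-scale data $\hat u_0=N^{-\dim V}$ on the points of $V$ with $|n|\sim N$ give output coefficients $\sim N^{-\dim V}$ on $\sim N^{\dim V}$ points. So $\||u_0|^{2k}u_0\|_{H^{s_c}}\approx 1$ and the ratio stays bounded. The slab fallback is also misguided: one lattice step off the null line, $h$ is already of size $\sim N$, so exact resonance is lost.

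Your Besov-type fallback is the right one, and it coincides with the paper's $\phi_N=\sum_{\xi\in V\cap[1,N]^d}|\xi|^{-\dim V}\me^{2\pi\mi x\cdot\xi}$. This has $\|\phi_N\|_{H^{s_c}}\approx(\log N)^{1/2}$. Since all coefficients are positive, each of the $2k$ lower factors contributes $\sum_{|\eta|\lesssim|\xi|}|\eta|^{-\dim V}\approx\log|\xi|$ with no cancellation. This gives the $(\log N)^k$ amplification of the first Picard iterate $t|\phi_N|^{2k}\phi_N$.
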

\begin{remark}
	On $\mathbb{R}\times\mathbb{T}$, the sharp $L^4$ estimates without loss were proved in~\cite{Deng-Fan-Zhao2025}, which implies sharp local well-posedness for cubic HNLS with small $L^2$ data.  It extends to global well-posedness by the mass conservation law.
\end{remark}

We also consider
the Davey-Stewartson system (DS) \begin{equation}
	\begin{cases}
		\mi \partial_t u + \sigma_1 \partial_{x_1}^2 u + \partial_{x_2}^2 u = \sigma_2 |u|^2u + (\partial_{x_1} \psi)u , \\
		\alpha \partial_{x_1}^2 \psi + \partial_{x_2}^2 \psi = - \gamma \partial_{x_1}(|u|^2),
	\end{cases} \quad (t,x) \in \mathbb  R\times \mathbb T^2.
	\label{DSII}
\end{equation}
Depending on the signs $(\operatorname{sgn}(\sigma_1), \operatorname{sgn}(\alpha))=(1,1), (1,-1), (-1, 1)$ and $ (-1,-1)$, the Davey-Stewartson systems are usually classified as elliptic-elliptic, elliptic-hyperbolic, hyperbolic-elliptic, and hyperbolic-hyperbolic. The elliptic-elliptic and elliptic-hyperbolic Davey-Stewartson systems were first derived by Davey-Stewartson~\cite{D-S} in the context of water waves, without surface tension (or capillarity).
Later, Djordjevic and Redekopp~\cite{Redekopp} showed that the inclusion of surface tension leads to a negative parameter
$\alpha$ when capillary effects are dominant, which corresponds to hyperbolic-elliptic system.
A rigorous derivation of \eqref{DSII} from the water wave problem in the modulational scaling regime was provided by Craig-Schanz-Sulem~\cite{Craig}.
Independently, Ablowitz-Haberman~\cite{Haberman}, Morris~\cite{Morris} and Cornille~\cite{Cornille} obtained a particular form of \eqref{DSII} as an example of completely integrable model  that generalizes  the 1d nonlinear Schr\"{o}dinger equation to two dimensions.
These are usually referred to as the DSI and DSII systems, corresponding to the parameters
\begin{align*}
	(\sigma_1,\sigma_2,\alpha,\gamma)=  & (1,\pm1,-1,\mp 2)  \\
	(\sigma_1, \sigma_2,\alpha,\gamma)= & (-1,\pm1,1,\pm 2).
\end{align*}

The investigation of the system \eqref{DSII} on $\mathbb R^2$ in terms of well-posedness was initiated by Ghidaglia-Saut~\cite{Saut} who established the local well-posedness in the elliptic–elliptic, elliptic–hyperbolic and hyperbolic–elliptic cases. Linares-Ponce~\cite{Ponce} studied the local well-posedness in weighted Sobolev spaces for the elliptic-hyperbolic and hyperbolic–hyperbolic cases.
Nachman-Regev-Tataru~\cite{Tataru2020} considered the defocusing DSII equation,  i.e. $(\sigma_1, \sigma_2,\alpha,\gamma)=(-1,1,1, 2)$, and
proved global well-posedness and scattering for any initial data in $L^2(\mathbb R^2)$.

The periodic problem is much less studied. $L^4$ Strichartz estimate would imply local well-posedness for elliptic-elliptic DS with data in $H^s(\mathbb{T}^2)$ for $s>0.$ Godet~\cite{Godet} obtained  local well-posedness  for the hyperbolic–elliptic case in $H^s(\mathbb T^2)$ for $s > 1/2$ as well as a lower bound on the  blow-up rate for this equation.

In this paper, we study the hyperbolic-elliptic Davey-Stewartson system with  $\sigma_1 = -1$ and  $\alpha>0$.
The system \eqref{DSII} can be rearranged as a hyperbolic Schr\"{o}dinger equation with nonlocal nonlinearity
\begin{equation}
	\label{DS hyperbolic nonlocal}
	\mi\partial_t u  -\partial_1^2u+\partial_2^2 u = \Big(\sigma_2-\frac\gamma{1+\alpha} \Big) |u|^2 u +  \frac{\gamma}{1+\alpha} \mathscr{J}_{\mathrm{DS}}(|u|^2)\cdot u,
\end{equation}
where $\mathscr{J}_{\mathrm{DS}}$ is the multiplier operator with symbol $m_{\mathrm{DS}}$ given by
\begin{equation}\label{def:JDS}
	m_{\mathrm{DS}}(\xi,\eta) = \frac{-\xi^2+\eta^2}{\alpha \xi^2+\eta^2}, \quad (\xi,\eta) \neq (0,0);  \quad m_{\mathrm{DS}}(0,0)=1.
\end{equation}
\begin{thm}\label{thm:LWP for DS} We have the following results about well-posendess of \eqref{DS hyperbolic nonlocal}:
	\begin{enumerate}
		\item The Cauchy problem for \eqref{DS hyperbolic nonlocal} is locally well-posedness in $H^s(\mathbb{T}^2)$ for $s>0$ if $\sigma_2=\gamma/(1+\alpha)$ and $s>1/2$ if $\sigma_2\neq \gamma/(1+\alpha)$.
		\item If $\sigma_2\neq \gamma/(1+\alpha)$, let $T > 0$ be an arbitrarily small positive constant. Assume that the data-to-solution map $ u_0 \mapsto u(\cdot)$ associated with \eqref{DS hyperbolic nonlocal} on smooth data extends continuously to a map from  $H^{1/2}(\mathbb T^2)$ into $C([0,T);H^{1/2}(\mathbb T^2))$. Then this map will not be $C^3$ at the origin.
	\end{enumerate}
\end{thm}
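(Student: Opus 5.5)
For part (1), the plan is to run a standard contraction argument in Bourgain spaces $X^{s,b}$ adapted to $\mi\partial_t-\partial_1^2+\partial_2^2$. The linear input is Theorem~\ref{thm:main sharp Strichartz estimate on d-dimensional tori} with $d=2$, $v=1$, $p=4$, which gives $N^{1/4}$ loss. That is too weak by itself, so the first step is to decompose the cubic interaction according to the structure of the nonlinearity.

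Write $u=\sum_N P_N u$. The trilinear form we must control is
\[
\int \widehat{|u|^2}(\zeta)\,m(\zeta)\,\overline{\widehat{u\bar w}(\zeta)}\dif\zeta\,\dif t,
\]
with $m=\sigma_2-\frac{\gamma}{1+\alpha}+\frac{\gamma}{1+\alpha}m_{\mathrm{DS}}$. The loss in the hyperbolic $L^4$ estimate comes from concentration along the null lines $\xi=\pm\eta$, since the hyperbolic paraboloid contains lines. When $\sigma_2=\gamma/(1+\alpha)$, the symbol is $m=\frac{\gamma}{1+\alpha}\frac{\eta^2-\xi^2}{\alpha\xi^2+\eta^2}$. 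This vanishes exactly on the set $\eta^2=\xi^2$, which carries the resonant concentration of $u\bar u$ for the $-\partial_1^2+\partial_2^2$ flow. More precisely, for $u_1\bar u_2$ with frequencies $k_1,k_2$, the resonance function is $(k_1-k_2)\cdot\Lambda(k_1+k_2)$ with $\Lambda=\mathrm{diag}(-1,1)$. The bad configurations are those where $\zeta=k_1-k_2$ is near a null direction and the pairs are counted on long lines.

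The key step is therefore a bilinear estimate. I would dyadically localize $|\zeta|\sim M$ and the angle of $\zeta$ to the null lines, $\big||\xi|-|\eta|\big|\sim\mu M$. The aim is to show
\[
\|P_{\zeta\text{-sector}}(u_1\bar u_2)\|_{L^2_{t,x}}\lesssim (\mu M)^{1/2}\,\|u_1\|_{X^{0,b}}\,\|u_2\|_{X^{0,b}}.
\]
This would come from counting lattice points: for fixed $\zeta$ and modulation, $k_1$ lies on a line orthogonal to $\Lambda\zeta$ inside a ball of radius $N$, up to a divisor-type factor. Since $|m|\lesssim\mu$ on that sector, the factor $\mu\cdot\mu^{1/2}$ absorbs the loss, leaving only $N^{0+}$ logarithmic sums. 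Those sums are closed by any $s>0$.

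When $\sigma_2\ne\gamma/(1+\alpha)$, the symbol does not vanish on the null lines and we are left with the plain cubic HNLS with $(d,v,k)=(2,1,1)$. For that equation LWP for $s>1/2$ is already known. Since $\mathscr J_{\mathrm{DS}}$ is bounded on $L^2$-based spaces, the same estimate, namely Strichartz with a $1/4$ loss on each of two factors, gives $s>1/2$.

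For part (2), I would follow the ill-posedness argument for $(2,1,1)$ at $s=1/2$ in~\cite{LZ2025}. The third Picard iterate is
\[
A_3(u_0)(t)=\int_0^t \me^{\mi(t-t')\Box}\big[m\text{-cubic}(\me^{\mi t'\Box}u_0)\big]\dif t'.
\]
It suffices to find data with $\|u_0\|_{H^{1/2}}=1$ and $\|A_3(u_0)(t)\|_{H^{1/2}}\to\infty$.

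For the data, take $\widehat{u_0}=N^{-1}\,\mathbf 1_{\{(n,n):\,N\le n\le 2N\}}$, which is supported on the null diagonal. Then $|(n,n)|^2_\Box=0$, so these modes are stationary and all interactions $k_1-k_2+k_3$ on the diagonal are exactly resonant. Moreover $\zeta=k_1-k_2$ lies on the diagonal, where $m_{\mathrm{DS}}=0$, so $m=\sigma_2-\gamma/(1+\alpha)\ne0$.

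It follows that $A_3=c\,t\,(u_0|u_0|^2)$ with $c\ne0$. Counting diagonal triples then gives $\|A_3\|_{H^{1/2}}\gtrsim t\,N^{1/2}\cdot N^{-3}\cdot N^{2}\cdot N^{1/2}\sim t\log$-free growth, which must be checked against the normalization. One should adjust the data to a square $\{(n,n+j)\}$ strip if needed, so as to obtain a divergent sum, as in~\cite{LZ2025}. The main obstacle is the lattice-counting bilinear estimate in part (1), in particular uniformity in $\alpha$ near the null sector.
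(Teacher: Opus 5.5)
Your sketch of part (1) for $\sigma_2\neq\gamma/(1+\alpha)$ is fine in outline. The other two steps have genuine gaps.

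\textbf{Part (1), $\sigma_2=\gamma/(1+\alpha)$, $s>0$.} Your sector estimate $\|P_{\text{sector}}(u_1\bar u_2)\|_{L^2}\lesssim(\mu M)^{1/2}$ is too weak to close. On the sector you only have $|m|\sim\mu$, so pairing with the same bound for the other bilinear factor gives $\mu\cdot(\mu M)^{1/2}\cdot(\mu M)^{1/2}=\mu^2M$. At generic angles $\mu\sim1$, where $m_{\mathrm{DS}}\sim1$, this loses a full $M$, up to $N_{\mathrm{med}}$, which is worse than for cubic HNLS. Your ``$\mu\cdot\mu^{1/2}$'' bookkeeping drops these powers of $M$.

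Your counting does not even give that bound. For fixed $\zeta$, the admissible $k_1$ on the line $k_1\cdot\Lambda\zeta=\mathrm{const}$ number about $1+N\gcd(\zeta)/|\zeta|$. This is $\sim N$ for $\zeta=(M,0)$, a generic-angle frequency.

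What is needed is that every interaction whose output frequency is not \emph{exactly} on $\ell^0_\pm$ is essentially lossless: the dichotomy is arithmetic, not angular. This is the paper's key lemma. For $u_i=P_{C_i}\me^{\mi t\Box}\phi_i$, the off-diagonal part of Definition~\ref{def:off-diagonal} satisfies $\|\widetilde{\mathscr J}(u_1,u_2)\|_{L^2_{t,x}}\lesssim N^\varepsilon$ (Lemma~\ref{lemv2: Bilinear L2 estimate for projection on diagonal}). The proof expands the square over the resonant set $\Gamma$ and groups by the \emph{sum} $\tilde\xi=\xi_1+\xi_3$. The constraint is then the hyperbola $h(\xi-\tilde\xi/2)=\tilde\tau/2-h(\tilde\xi/2)$, which has $\lesssim N^\varepsilon$ lattice points by the divisor bound.

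In the degenerate case the set is $\ell_+^{s_+}\cup\ell_-^{s_-}$. The off-diagonal condition forces $\xi_1,\xi_2$ onto different lines, which gives the bound $\sum_s\|P_{\ell_+^s}\phi_1\|\|P_{\ell_+^s}\phi_2\|\cdot\sum_{s'}\|P_{\ell_-^{s'}}\phi_1\|\|P_{\ell_-^{s'}}\phi_2\|\le1$. The identity $\sum_i(-1)^{i-1}h(\xi_i)=2(\xi_1-\xi_4)^tA(\xi_1-\xi_2)$ on $\Gamma$ transfers the off-diagonal condition between pairings, which handles $C_1\neq C_2$.

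One then writes the quadrilinear form as an $L^2$ pairing of two $\sqrt{\mathscr J_{\mathrm{DS}}}$-bilinear terms. Decomposing the two highest-frequency factors into cubes of side $N_{\mathrm{med}}$ gives an $N_{\mathrm{med}}^\varepsilon$ loss, summable for every $s>0$. Uniformity in $\alpha$ is not an issue: $\alpha>0$ is fixed, and only the boundedness of $m_{\mathrm{DS}}$ and its vanishing on $\ell^0_\pm\setminus\{0\}$ are used.

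\textbf{Part (2).} Your single-scale data gives $\|A_3(u_0)(t)\|_{H^{1/2}}\sim t$, as your own count shows, so it proves nothing. Passing to a strip $\{(n,n+j)\}$ is the wrong fix. Since $h(n,n+j)=2nj+j^2\neq0$ for $j\neq0$, those modes are not stationary, and you lose the exact resonance that gave $A_3=c\,t\,u_0|u_0|^2$.

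The divergence must come from superposing all scales along the diagonal. Take $\hat\phi_N(n,n)=1/|(n,n)|$ for $1\le n\le N$. Then $\|\phi_N\|_{H^{1/2}}\approx(\log N)^{1/2}$. Restricting to $\xi_2,\xi_3$ much smaller than $\xi$ gives $\widehat{|\phi_N|^2\phi_N}(\xi)\gtrsim(\log|\xi|)^2/|\xi|$, hence $\||\phi_N|^2\phi_N\|_{H^{1/2}}\gtrsim\log N\,\|\phi_N\|_{H^{1/2}}^3$.

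Since $m_{\mathrm{DS}}$ vanishes at the nonzero diagonal differences, the first Picard iterate is a nonzero multiple of $(\sigma_2-\tfrac{\gamma}{1+\alpha})\,t\,|\phi_N|^2\phi_N$. The only other term is a harmless zero-mode term proportional to $t\|\phi_N\|_{L^2}^2\phi_N$. Normalizing $\phi_N$ then shows the third derivative at the origin is unbounded.
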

\begin{remark}
	The  DSII system satisfies the condition $\sigma_2=\gamma/(1+\alpha)$ and hence enjoys better local well-posedness, while for $\sigma_2\neq\gamma/(1+\alpha)$ the DS system behaves like cubic hyperbolic Schr\"{o}dinger equation~\cite{wang2013}.
\end{remark}
\begin{remark}

	Although Theorems~\ref{thm:cauchyProblemHNLS} and~\ref{thm:LWP for DS} are stated for square tori, the results generalize straightforwardly to any rational tori, i.e., those for which $L_j/L_k\in \mathbb{Q}, $
	(see footnote~\ref{footnote1}).
	The essential ingredient in our proof is a refined bilinear estimate for terms with an off-diagonal nature; see Definition~\ref{def:off-diagonal}, Lemma~\ref{lemv2: Bilinear L2 estimate for projection on diagonal}, and \ref{lem:refinedLp}.  It is a rather interesting question whether one can obtain same well-posedness result for septic NLS and the DSII equation on 2d irrational tori.
\end{remark}

The outline of this paper is as follows: In Section~\ref{Section: Strichartz Estimates}, we prove the Strichartz estimate using the major arc method, and we adopt the simplified approach of Killip-Vi\c{s}an~\cite{Killip2016}. In Section~\ref{Sec:bilinear}, we single out the off-diagonal part for the bilinear term and prove the refined bilinear estimate. In Section~\ref{Sec:LWP}, we set up the general framework using $U,V$ space.  Then we prove an improvement of the trilinear estimate based on the refined bilinear estimate.
Finally, we prove Theorem~\ref{thm:cauchyProblemHNLS} and~\ref{thm:LWP for DS} via the contraction mapping argument.

\section{Strichartz Estimates}\label{Section: Strichartz Estimates}
\subsection{Notations}
We denote $A\lesssim B$ or $A=O(B)$ if $A\leq CB$ holds for some constant $C>0$ independent with $A$ and $B$. We write $A\approx B$ if $A\lesssim B$ and $B\lesssim A$.

For $f \in L^2(\mathbb T^d)$, the Fourier coefficients of $f$ are given by
\[ \hat f(k) = \int_{\mathbb T^d} f(x) \me^{-2\pi\mi k\cdot x} \dif x, \quad k\in\mathbb{Z}^d,\]
and the Fourier series of $f$ is
\[ f(x) =  \sum_{k\in\mathbb Z^d} \hat f(k) \me^{2\pi\mi k\cdot x}.\]
For any subset $S\subset \mathbb Z^d$, we denote $P_S$ for the Fourier multiplier with symbol $\chi_S$, i.e.
\[ P_Sf = \sum_{k\in S} \hat f(k) \me^{ 2\pi\mi k\cdot x}. \]
When we discuss cubes (or squares) $C\subset \mathbb Z^d$, we always assume  their edges are parallel to the coordinate axes.

In this paper, $N$ will always be a dyadic integer, i.e.  $N=2^n$ for some $n\in\mathbb N$.
For $S = {\{ k \in \mathbb Z^d \mid  N \leq |k| < 2N \}}$ we simply write $P_S$ as $P_N$, and
\[P_{\leq N} f= \sum_{M\leq N,\ M \text{ dyadic}}P_M f.\]

For $s\in\mathbb R$, the Sobolev space $H^s(\mathbb T^d)$ is the set of all functions $f\in L^2(\mathbb T^d)$ such that the norm
\[ \| f \|_{H^s(\mathbb{T}^d) }: = \left( \sum_{k\in\mathbb{Z}^d} \left<k\right>^{2s} |\hat f(k)|^2 \right)^{1/2} \]
is finite, where $\left< k \right> = \sqrt{1+|k|^2}$.

For any Fourier multiplication operator $\mathscr{A}$, we naturally view it as a  sesquilinear operator by denoting
$\mathscr{A}(u_1, u_2)= \mathscr{A}(u_1\bar{u}_2)$.

For integers $a,b$, we denote $a|b$ if $a^{-1}b\in\mathbb Z$.
By $a\sim b$ we  mean $a^{-1}b\in [1/2 , 2]$.
$d(a)$ is the number of divisors of $a$.
$d(a;Q)$ is the number of divisors $q$ of $a$ that obey $q\sim Q$.
$\gcd(a,b) $ denotes the greatest common divisor of $a$ and $b$.
$\operatorname{lcm}(a,b)$ denotes the least common multiple of $a$ and $b$.
We also denote $\#(\cdot)$ for the counting measure and $|\cdot|$ for the Lebesgue measure,
and $\langle\cdot,\cdot\rangle$ for the $L^2$ inner product.

\subsection{Strichartz Estimate}
Consider the symmetric hyperbolic Schr\"{o}dinger equation with the operator
\begin{equation}
	\Box = \sum_{j=1}^d (-1)^{j-1}\theta_{j}\partial_{x_j}^2,\quad \theta_j\in(0,1],\quad d \text{ is even integer}.\end{equation}
We denote the one-dimensional convolution kernel of Schr\"{o}dinger propagator with smooth cut-off
\begin{equation}
	K^\ominus_N(t,x) = \sum_{n\in\mathbb{Z}} \Psi(n/N) \me^{2\pi\mi (nx+n^2t)},\quad x\in\mathbb{T}^1,
\end{equation}
where $\Psi$ is a suitable compactly supported even function. Then the $d$-dimensional convolution kernel with respect to $\{\me^{\mi t\Box}\}_{t\in\mathbb{R}}$ is given by
\begin{align}
	K_N(t,x) & =
	\prod_{j\text{ even}} \sum_{n_j}  \Psi(\frac{n_j}{N})\me^{2\pi \mi (x_jn_j + \theta_jn_j^2 t)} \prod_{j\text{ odd}} \sum_{n_j}  \Psi(\frac{n_j}{N})\me^{2\pi \mi (x_jn_j - \theta_jn_j^2 t)} \\
	         & = \prod_{j\text{ even}} K^\ominus_N(\theta_{j}t,x_j) \prod_{j\text{ odd}} \overline{K^\ominus_N(\theta_{j}t,x_j)},\quad x\in \mathbb{T}^d.
\end{align}
Here, we replaced $n_j$ by $-n_j$ when $j$ is odd, exploiting that $\Psi$ is even.

Notice that the complex conjugation doesn't affect the size of the kernel,  and the arithmetic–geometric
mean trick used by  Killip-Vi\c{s}an~\cite{Killip2016} helps to handle the general tori. Thus, we can expect the argument in~\cite{Bourgain1993} should still work for the hyperbolic Schr\"{o}dinger equation on general tori.

The solution to the free hyperbolic Schr\"{o}dinger equation with initial data $u_0$ is given by
\begin{equation}
	P_{\leq N} u(t,x) =P_{\leq N} \me^{\mi t\Box} u_0(x) = u_0*_{x}K_N(t,x),
\end{equation}
which admits the mass conservation law
\begin{equation}
	\|  u(t,\cdot)  \|_{L^2_{x} (\mathbb T^d)} = \| u_0 \|_{L^2_{x} (\mathbb T^d)}.
\end{equation}
Besides, Young's inequality for convolution indicates that
\begin{equation}
	\| P_{\leq N} u(t,\cdot)\|_{L^\infty_{x} (\mathbb T^d)} \le \| K_N (t,\cdot) \|_{L^\infty_{x} (\mathbb{T}^d)} \| u_0 \|_{L^1_{x} (\mathbb{T}^d)}.
\end{equation}
Interpolating with these two bounds, we have that for any $r\geq 2$,
\begin{equation} \label{est:kernel-Lr}
	\| P_{\leq N} u(t,\cdot) \|_{L^r_{x} (\mathbb{T}^d)} \le \| K_N (t,\cdot) \|_{L^\infty_{x} (\mathbb{T}^d)} ^{1-2/r} \| u_0 \|_{L^{r'}_{x} (\mathbb{T}^d)}.
\end{equation}
We recall the Dirichlet lemma in Diophantine approximation.
\begin{lem}(Dirichlet) \label{L:Dirichlet}
	Given an integer $N\geq 2$ and $\beta\in[0,1]$, there exist integers $1\leq q<N$ and $0\leq a\leq q$ so that $\gcd(a,q)=1$ and
	$
		| \beta - \tfrac aq | \leq \tfrac{1}{Nq}.
	$
\end{lem}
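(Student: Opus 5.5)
The statement to prove is Dirichlet's lemma: for integer $N\geq 2$ and $\beta\in[0,1]$, find $1\le q<N$ and $0\le a\le q$ with $\gcd(a,q)=1$ and $|\beta-a/q|\le 1/(Nq)$. I will use the classical pigeonhole argument.

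\textbf{Pigeonhole step.} Consider the $N+1$ real numbers
\[ 0,\quad \{\beta\},\quad \{2\beta\},\quad \dots,\quad \{(N-1)\beta\},\quad 1, \]
where $\{\cdot\}$ denotes the fractional part. All of them lie in $[0,1]$. Partition $[0,1]$ into the $N$ intervals $[j/N,(j+1)/N]$ for $0\le j\le N-1$. By pigeonhole, two of the $N+1$ numbers lie in the same interval, so they differ by at most $1/N$.

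Each of these numbers has the form $m\beta-n$ with integers $0\le m\le N-1$ and $n$. Indeed, $0=0\cdot\beta-0$, and $1=0\cdot\beta-(-1)$. So the two close numbers are $m_1\beta-n_1$ and $m_2\beta-n_2$ with
\[ |(m_1-m_2)\beta-(n_1-n_2)|\le \tfrac1N . \]

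\textbf{Nonzero coefficient.} I need $m_1\ne m_2$. If $m_1=m_2$, the two numbers are distinct entries of the list with the same coefficient of $\beta$. The only such pair is $0$ and $1$, which both have $m=0$. Their difference is $1>1/N$ since $N\ge2$, so they cannot be the close pair. Hence $m_1\ne m_2$.

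Relabel so that $q':=m_1-m_2$ satisfies $1\le q'\le N-1$, and set $a'=n_1-n_2$. Then $|q'\beta-a'|\le 1/N$.

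\textbf{Reduction to lowest terms.} Let $g=\gcd(a',q')$ and put $q=q'/g$, $a=a'/g$. Then $\gcd(a,q)=1$ and $1\le q\le q'<N$. Moreover,
\[ |q\beta-a| = \tfrac1g\,|q'\beta-a'| \le \tfrac1N , \]
which is a slightly stronger inequality than the one for $(a',q')$. Dividing by $q$ gives $|\beta-a/q|\le 1/(Nq)$.

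\textbf{Range of $a$.} From $|\beta-a/q|\le 1/(Nq)$ we get $\beta q-1/N\le a\le \beta q+1/N$. Since $\beta\in[0,1]$ and $1/N<1$, this gives $-1<a<q+1$. As $a$ is an integer, $0\le a\le q$.

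The main point needing care is excluding the degenerate pair $\{0,1\}$. Including the endpoint $1$ gives the extra pigeon, and the hypothesis $N\ge2$ (so $1/N<1$) is exactly what rules out $0$ and $1$ being the close pair. Everything else is routine.
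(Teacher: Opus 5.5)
Your proof is correct and complete. The paper does not prove this lemma; it only recalls it as the classical Dirichlet approximation theorem. Your pigeonhole argument with the $N+1$ points $0,\{\beta\},\dots,\{(N-1)\beta\},1$ is the standard proof of it.

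The delicate steps all check out:
\begin{itemize}
\item Using $N\ge 2$ to exclude the pair $\{0,1\}$ is exactly what forces $m_1\neq m_2$.
\item Dividing by $g=\gcd(a',q')$ only improves the bound. This includes the degenerate case $a'=0$, which gives $q=1$, $a=0$.
\item The range $0\le a\le q$ follows correctly from $1/N<1$.
\end{itemize}

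One cosmetic point: the closed intervals $[j/N,(j+1)/N]$ overlap at their endpoints, so they cover $[0,1]$ rather than partition it. Assign each point to a single interval (or use half-open intervals with the last one closed), and the pigeonhole step goes through unchanged.
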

As a consequence, the following dispersive estimate for the Schr\"{o}dinger propagator on the torus holds true.
\begin{lem}~\cite{Bourgain1993} \label{lem:Weyl's bound}
	Let $1\leq q<N$ and $0\leq a\leq q$ so that $\gcd(a,q)=1$ and
	$
		| t - \tfrac aq | \leq \tfrac{1}{Nq}.
	$
	Then we have
	\begin{equation}
		|K^\ominus_N(t,x)| \lesssim \frac{N}{\sqrt{q}(1+N|t-a/q|^{1/2})}.
	\end{equation}
\end{lem}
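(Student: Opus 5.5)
The plan is to prove the bound by splitting $n$ into residue classes modulo $q$ and applying Poisson summation. This turns $K^\ominus_N(t,x)$ into a sum of quadratic Gauss sums, each multiplied by an oscillatory integral. The Gauss sums give the factor $q^{-1/2}$, and the integrals give $\min(N,|\beta|^{-1/2})$. Write $t=\frac aq+\beta$ with $|\beta|\le \frac1{Nq}$, and set $e(z)=\me^{2\pi\mi z}$. Substituting $n=qm+r$ with $0\le r<q$, the factor $e(n^2a/q)=e(r^2a/q)$ depends only on $r$. Hence
\begin{equation}
K^\ominus_N(t,x)=\sum_{r=0}^{q-1} e\Big(\frac{ar^2}{q}\Big)\sum_{m\in\mathbb Z}\Psi\Big(\frac{qm+r}{N}\Big)e\big((qm+r)x+(qm+r)^2\beta\big).
\end{equation}

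Apply Poisson summation in $m$ to the inner sum and change variables $y=qm+r$. This gives
\begin{equation}
K^\ominus_N(t,x)=\frac1q\sum_{k\in\mathbb Z} G(a,k;q)\, I_k,\qquad G(a,k;q)=\sum_{r=0}^{q-1}e\Big(\frac{ar^2+kr}{q}\Big),
\end{equation}
where $I_k=\int_{\mathbb R}\Psi(y/N)\,e\big(y(x-\tfrac kq)+\beta y^2\big)\dif y$. Since $\gcd(a,q)=1$, the standard Gauss sum evaluation (square $|G|^2$ and sum the resulting linear exponential sums) gives $|G(a,k;q)|\le\sqrt{2q}$ uniformly in $k$. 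It therefore remains to show
\begin{equation}
\sum_k |I_k|\lesssim \min\big(N,|\beta|^{-1/2}\big)\approx \frac{N}{1+N|\beta|^{1/2}}.
\end{equation}

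For the oscillatory integrals, the phase derivative is $\phi_k'(y)=x-\frac kq+2\beta y$. On the support $|y|\lesssim N$ we have $|2\beta y|\lesssim \frac1q$, so only $O(1)$ values of $k$ have $|x-\frac kq|\lesssim \frac1q$. For each of these, the trivial bound gives $|I_k|\lesssim N$. The second derivative test (van der Corput, with $\phi''=2\beta$ and the amplitude of bounded variation) gives $|I_k|\lesssim|\beta|^{-1/2}$. Together these yield $\min(N,|\beta|^{-1/2})$.

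For the remaining $k$, write $|x-\frac kq|\approx \frac{|j|}q$ with $|j|\ge C$; then $|\phi_k'|\gtrsim \frac{|j|}{q}$ on the support. I would integrate by parts twice, using $\phi''=2\beta$ with $|\beta|\le\frac1{Nq}$ and $|\partial_y^\ell\Psi(y/N)|\lesssim N^{-\ell}$. This should give $|I_k|\lesssim N\,(Nq^{-1}|j|)^{-2}\,(1+|\beta|N^2)$. Since $|\beta|N^2\le N/q$ and $q<N$, I expect this to be $\lesssim \frac{q}{|j|^2}$ up to harmless factors.

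Summing over $j$ then gives a tail of size $O(q)$. After multiplying by $\frac1q\sqrt{2q}$, the tail contributes $O(\sqrt q)$, and this is dominated by the main term $\frac{N}{\sqrt q(1+N|\beta|^{1/2})}\gtrsim \frac{N}{\sqrt q}\cdot\sqrt{q}\cdot\frac{1}{\sqrt{N}}\cdot\ldots$. More precisely, $N|\beta|^{1/2}\le\sqrt{N/q}$, so the main term is at least $\gtrsim \sqrt{N}\ge\sqrt q$. Hence the tail is admissible.

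The main obstacle is this nonstationary tail: the integration-by-parts bound must be uniform in $\beta$ and $x$, so that the $k$-sum converges with total size at most comparable to the main term. If two integrations by parts are too lossy, I would instead integrate by parts more times. Alternatively, I would replace $\Psi$ by a Schwartz-class smooth cutoff and quote a standard nonstationary phase lemma for amplitudes with derivative bounds $N^{-\ell}$.
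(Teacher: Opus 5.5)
The paper gives no proof of this lemma; it simply quotes it from Bourgain. Your argument is the classical major-arc proof, and it is correct. Its steps are: split $n$ into residue classes mod $q$, apply Poisson summation, use $|G(a,k;q)|\le\sqrt{2q}$, bound the $O(1)$ near-stationary indices by the trivial and second-derivative estimates, and integrate by parts twice for the rest.

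The integration-by-parts bound you hedged on does hold. Set $A=N|j|/q$ and $B=|\beta|N^2\le N/q$. Two integrations by parts give $|I_k|\lesssim NA^{-2}(1+B/A+B^2/A^2)$. This is dominated by your bound $NA^{-2}(1+B)\lesssim q/j^2$, because $A>1$ and $B/A^2\le q/N<1$.

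Two small remarks.
\begin{enumerate}
\item You need $\Psi$ smooth (at least $C^2$) for Poisson summation and the two integrations by parts. This is what the paper's ``suitable'' cutoff should be taken to mean.
\item The final comparison is simpler than you make it. The tail is $O(q)$, and in fact $O(q^2/N)$ if you use $B/A\le 1/|j|$. Since $q\le\sqrt{Nq}\le\min(N,|\beta|^{-1/2})$, the reduction $\sum_k|I_k|\lesssim\min(N,|\beta|^{-1/2})$ holds exactly as you stated it. The garbled display in your last paragraph can be dropped.
\end{enumerate}
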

For the $d$-dimensional case, applying the arithmetic–geometric
mean inequality, one easily gets that for each $t$ there exists $0\leq a_j\leq q_j<N$ so that $\gcd(a_j,q_j)=1$ and
\begin{align}
	| K_N(t,x) | & \lesssim \prod_{j=1}^d \frac{N}{\sqrt{q_j}(1+N|\theta_jt-a_j/q_j|^{1/2})} \\& \lesssim \sum_{j=1}^d \Big(\frac{N}{ \sqrt{q_j}(1+N|\theta_jt-a_j/q_j|^{1/2}) }\Big)^d.
\end{align}
Take $\varphi$ to be a suitable cut-off function such that for dyadic $Q,T \in 2^{\mathbb{Z}}$, \begin{equation}
	1 \leq \sum_{Q=1}^{Q_{\text{max}}} \sum_{T=1/N^2}^{T_{\text{max}}} \bigg(\sum_{\substack{ \gcd(a,q)=1 \\ q\sim Q}} \varphi \Big( \frac{t-a/q}{T} \Big)\bigg) + \rho(t),
\end{equation}
where $Q_{\text{max}} < N$ (we will take $Q_{\text{max}}=\sigma N$ for a small constant $\sigma$, so that the major arcs have the separation property, same as~\cite{Bourgain1993}), $T_{\text{max}} = Q_{\text{max}}/(QN^2)$ and $\rho$ is a bounded positive function. As a result, we have
\begin{align}
	\| K_N(t,\cdot)  \|_{L^\infty_{x}(\mathbb T^d)}^{1-2/r} & \lesssim \sum_{j=1}^d \bigg( \rho(\theta_jt) \cdot \Big(\frac{N^2}{Q_{\text{max}}}\Big)^{d(\frac 12-\frac 1r)}  + \sum_{Q,T}\Lambda_{Q,T}(\theta_jt) \cdot (QT)^{-d(\frac 12-\frac 1r)} \bigg) \\&=:\sum_{j=1}^d \Lambda_N(\theta_jt) .\label{est-Lr-lambda}
\end{align}
for \begin{equation}
	\Lambda_{Q,T}(t) = \sum_{\substack{ \gcd(a,q)=1 \\ q\sim Q}} \varphi\bigg(\frac{t-a/q}{T}\bigg).
\end{equation}
We will show time convolution estimates with kernel $\Lambda_N(t)$.
Since irreducible fractions with denominator $\sim Q$ are $Q^{-2} > T$-separated, it holds that $\| \Lambda_{Q,T} \|_{L^\infty} \lesssim 1$ and consequently
$
	\| \Lambda_{Q,T} * g \|_{L^\infty} \lesssim \| g \|_{L^1}.
$
On the other hand, we have the trivial control
\begin{equation}\label{est:time cut-off}
	\Lambda_{Q,T}(t) \leq \Theta_{Q,T}(t) := \sum_{q \sim Q} \sum_{a=0}^{q-1} \varphi\Big(\frac{t-a/q}{T}\Big),
\end{equation}
and the Fourier transform of $\Theta_{Q,T}$ is easier to estimate\begin{align}
	| \hat \Theta_{Q,T} (n) | & =T |\hat\varphi(Tn)| \sum_{q\sim Q}\sum_{a=0}^q \me^{2\pi\mi an/q} =T |\hat\varphi(Tn)| \sum_{q\sim Q}q\chi_{\{q|n\}} \\&\lesssim Q T d(n;Q) | \hat\varphi(T n) |,\quad \forall n\in\mathbb{Z},\label{est:time cut-off fourier}
\end{align}
recall that  $d(n;Q)$ denotes the number of divisors $q$ of $n$ that obeys $q\sim Q$.
\begin{lem}[{\cite[Lemma 4.28]{bour1989israel}}]\label{lem:distribution}
	Denote $d(\ell)$ the number of divisors of $\ell\in\mathbb{Z}$.
	For any integers $B,R\geq 1$ we have
	\begin{equation}
		\# \{1\leq n \leq R : d(n;Q)> D \} \leq  R D^{-B} \sum_{\ell=1}^{Q^B} \frac{d(\ell)^B}\ell.\end{equation}
\end{lem}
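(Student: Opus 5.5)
The plan is a $B$-th moment bound followed by Chebyshev's inequality, where the $B$-th moment of the restricted divisor function is computed by expanding it as a sum over $B$-tuples of divisors. First, since $d(n;Q)>D$ implies $(d(n;Q)/D)^B>1$, we have
\begin{equation}
	\# \{1\leq n \leq R : d(n;Q)> D \} \leq D^{-B} \sum_{n=1}^{R} d(n;Q)^B .
\end{equation}
So it suffices to show $\sum_{n=1}^R d(n;Q)^B \leq R\sum_{\ell} d(\ell)^B/\ell$, with $\ell$ ranging over the admissible least common multiples.

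Next I expand the power. By definition $d(n;Q)=\sum_{q\sim Q}\chi_{\{q|n\}}$, so
\begin{equation}
	d(n;Q)^B=\sum_{q_1,\dots,q_B\sim Q}\prod_{i=1}^B\chi_{\{q_i|n\}}=\sum_{q_1,\dots,q_B\sim Q}\chi_{\{\operatorname{lcm}(q_1,\dots,q_B)|n\}}.
\end{equation}
Summing over $1\leq n\leq R$ and exchanging the order of summation, the number of $n\in[1,R]$ divisible by $\ell=\operatorname{lcm}(q_1,\dots,q_B)$ is at most $R/\ell$. Here it matters that $n\geq 1$, so $n=0$, which is divisible by everything, is excluded. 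This gives
\begin{equation}
	\sum_{n=1}^R d(n;Q)^B\leq R\sum_{q_1,\dots,q_B\sim Q}\frac{1}{\operatorname{lcm}(q_1,\dots,q_B)}.
\end{equation}

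Finally I regroup the tuples according to the value $\ell$ of their least common multiple. Each $q_i$ is then a divisor of $\ell$, so the number of tuples with a given $\ell$ is at most $d(\ell)^B$. Moreover $\ell\leq\prod q_i$, so $\ell$ is at most $Q^B$ up to the normalisation of ``$q\sim Q$''. With the convention $q\sim Q\Rightarrow q\leq 2Q$ one gets $\ell\leq (2Q)^B$; this only changes the upper limit of the $\ell$-sum by a $B$-dependent constant factor, which is harmless in all later applications, or one may simply read $q\sim Q$ as $q\leq Q$. Combining the three displays yields the claimed bound. There is no real obstacle beyond this bookkeeping of the range of $\ell$; the key idea is just that the moment of $d(n;Q)$ linearises into lcm-divisibility conditions.
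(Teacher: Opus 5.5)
Your proof is correct and follows the paper's argument step for step: Chebyshev with the $B$-th moment, the expansion $d(n;Q)^B=\sum_{q_1,\dots,q_B\sim Q}\chi_{\{\operatorname{lcm}(q_1,\dots,q_B)\mid n\}}$, the bound $R/\operatorname{lcm}(q_1,\dots,q_B)$ on the number of admissible $n\le R$, and the regrouping by $\ell=\operatorname{lcm}(q_1,\dots,q_B)$ with at most $d(\ell)^B$ tuples per $\ell$. Your caveat about the range of $\ell$ is also justified. With the paper's convention $q\sim Q\iff q\in[Q/2,2Q]$ one only gets $\ell\le(2Q)^B$, and the bound with upper limit $Q^B$ can literally fail: for $Q=1$, $B=2$, $D=1.9$ the left side is $\lfloor R/2\rfloor$ while the right side is $R/3.61$. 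However, using $(2Q)^B$ instead costs nothing in Lemma~\ref{lem:interpolated time convolution estimate}, where only $\sum_{\ell\le (2Q)^B} d(\ell)^B/\ell \lesssim_{B,\varepsilon} Q^{\varepsilon}$ is needed.
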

\begin{proof}
	By Chebyshev's inequality,
	\begin{align}
		       & \#\{1\leq n \leq R : d(n;Q)> D \} \leq D^{-B} \sum_{n=1}^R (d(n;Q))^B = D^{-B} \sum_{n=1}^R \Big( \sum_{q \sim Q} \chi_{ \{ q | n\} } \Big)^B                                 \\
		={}    & D^{-B} \sum_{n=1}^R \sum_{q_1,\dots, q_B\sim  Q}  \chi_{\{q_i | n,\ 1\leq i \leq B\}} \lesssim D^{-B} \sum_{q_1,\dots,q_B \sim Q} \frac{R}{\operatorname{lcm}(q_1,\dots,q_B)} \\
		={}    & D^{-B} \sum_{\ell=1}^{Q^B} \frac R\ell \#\{q_1,\dots,q_B \sim Q :  \operatorname{lcm}(q_1,\dots,q_B) = \ell\}                                                                 \\
		\leq{} & D^{-B} \sum_{\ell=1}^{Q^B} \frac R\ell \#\{q_1,\dots,q_B \sim Q : q_i | \ell,1\leq i\leq B\} = D^{-B} \sum_{\ell=1}^{Q^B} \frac R\ell \#\{q : q|\ell
		\}^B.
	\end{align}
\end{proof}

\begin{lem}\label{lem:interpolated time convolution estimate}
	For $0<T<1$ and $1\leq Q \leq T^{-1/2}$, we have that for any $p>2$ and $f,g\in L^{p'}([0,1])$, we have
	\begin{align} \label{convolution-bound}
		| \langle f, g*\Lambda_{Q,T}\rangle|  \lesssim_p {} Q^2T \|f\|_{L^1}\|g\|_{L^1} + Q^{\frac{2 (1+\varepsilon)}p} T^{\frac 2p} \| f\|_{L^{p'} } \| g \|_{L^{p'}}.
	\end{align}
\end{lem}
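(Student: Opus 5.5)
We split $\Lambda_{Q,T}\le\Theta_{Q,T}$ into its zero Fourier mode, a set of "good" frequencies and a small set of "bad" frequencies, as in Bourgain's argument and the simplification of Killip--Vi\c{s}an. We may assume $f,g\ge 0$, so that $|\langle f,g*\Lambda_{Q,T}\rangle|\le\langle f,g*\Theta_{Q,T}\rangle$ by \eqref{est:time cut-off}. The proof interpolates two bounds.

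\emph{An $L^1\times L^1$ bound.} Since $\|\Lambda_{Q,T}\|_{L^\infty}\lesssim1$, we have $|\langle f,g*\Lambda_{Q,T}\rangle|\lesssim\|f\|_{L^1}\|g\|_{L^1}$.

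\emph{An $L^2\times L^2$ bound.} For this we expand $\Theta_{Q,T}$ in Fourier series on $[0,1]$ (extending $f,g$ by zero and viewing them on the circle). Write $\Theta_{Q,T}=\hat\Theta_{Q,T}(0)+\sum_{n\neq0}\hat\Theta_{Q,T}(n)\me^{2\pi\mi nt}$. By \eqref{est:time cut-off fourier}, $\hat\Theta_{Q,T}(0)\lesssim Q^2T$, since $\sum_{q\sim Q}q\approx Q^2$. This term contributes $Q^2T\|f\|_{L^1}\|g\|_{L^1}$, which is the first term in \eqref{convolution-bound}.

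For $n\ne0$ we fix a threshold $D$ and split $\mathbb Z\setminus\{0\}$ into two sets.

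\emph{Good frequencies}, where $d(n;Q)\le D$. By \eqref{est:time cut-off fourier} and Plancherel, their contribution is at most $QTD\,\|f\|_{L^2}\|g\|_{L^2}$.

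\emph{Bad frequencies}, where $d(n;Q)>D$. Because $|\hat\varphi(Tn)|$ decays rapidly, we may decompose dyadically in $|n|\sim R\ge T^{-1}$ and in effect restrict to $|n|\lesssim T^{-1}$ up to tails. The trivial bound $d(n;Q)\lesssim Q$ then gives $|\hat\Theta_{Q,T}(n)|\lesssim Q^2T$. Lemma~\ref{lem:distribution} counts these frequencies:
\[
\#\{|n|\lesssim T^{-1}:\ d(n;Q)>D\}\lesssim_B T^{-1}D^{-B}Q^{\varepsilon}.
\]
This uses the divisor bound $\sum_{\ell\le Q^B}d(\ell)^B/\ell\lesssim_{B,\varepsilon}Q^{\varepsilon}$, where the $\varepsilon$ is reassigned after fixing $B$. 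On the bad set we estimate with $\ell^\infty$ bounds on $\hat f,\hat g$ and the $\ell^1$ norm of the multiplier:
\[
\Big|\sum_{n\ \mathrm{bad}}\hat\Theta_{Q,T}(n)\hat f(n)\overline{\hat g(n)}\Big|\le Q^2T\cdot\#\{\text{bad}\}\cdot\|f\|_{L^1}\|g\|_{L^1}\lesssim Q^{2+\varepsilon}D^{-B}\|f\|_{L^1}\|g\|_{L^1}.
\]
Choosing $D=Q^{\varepsilon}$ and $B$ large makes $Q^{2+\varepsilon}D^{-B}\le Q^2T$, using $T\ge Q^{-C}$. This is the one delicate point: when $T$ is very small compared with $Q$, we instead use the hypothesis $Q\le T^{-1/2}$, take $D=T^{-\varepsilon}$, and convert back using $T^{-\varepsilon}\lesssim Q^{-2\varepsilon}$-type bookkeeping. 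Either way the bad frequencies are absorbed into the $Q^2T\|f\|_1\|g\|_1$ term. Altogether,
\[
|\langle f,g*\Lambda_{Q,T}\rangle|\lesssim Q^2T\|f\|_{L^1}\|g\|_{L^1}+Q^{1+\varepsilon}T\|f\|_{L^2}\|g\|_{L^2}.
\]

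\emph{Interpolation.} It remains to combine the two bounds to reach $L^{p'}$. Subtract the zero mode: let $\Lambda'=\Lambda_{Q,T}-\text{(low part)}$. The "$Q^2T\|f\|_1\|g\|_1$" piece is harmless because $\|f\|_{L^1}\le\|f\|_{L^{p'}}$ on $[0,1]$. For the remainder $\Lambda'$ we have $\|\Lambda'\|_{L^\infty}\lesssim1+Q^2T\lesssim1$ (using $Q^2T\le1$), which gives the $L^1\to L^\infty$ bound. It also has the $L^2\to L^2$ norm $Q^{1+\varepsilon}T$ coming from its good frequencies. Bilinear complex interpolation (Riesz--Thorin for the operator $g\mapsto g*\Lambda'$ from $L^{p'}$ to $L^p$, with $\theta=2/p$) yields
\[
(Q^{1+\varepsilon}T)^{2/p}\cdot1^{1-2/p}=Q^{\frac{2(1+\varepsilon)}{p}}T^{\frac2p}.
\]

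The main obstacle is arranging the splitting so that the interpolation is legitimate. One has to define an operator whose $L^\infty$ kernel bound is $O(1)$ and whose $L^2$ multiplier bound is $Q^{1+\varepsilon}T$. To do this, peel off the zero mode together with the bad frequencies into a separate operator $E$ satisfying $|\langle f,Eg\rangle|\lesssim Q^2T\|f\|_1\|g\|_1$, in such a way that $E$ also has an $L^\infty$ kernel bounded by $O(Q^2T)\le O(1)$. That is exactly what the distribution count of Lemma~\ref{lem:distribution} guarantees, with $B$ chosen depending on $p$ and $\varepsilon$.
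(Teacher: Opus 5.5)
There is a genuine gap, at the step you call delicate. The endpoint bound $|\langle f,g*\Lambda_{Q,T}\rangle|\lesssim Q^2T\|f\|_{L^1}\|g\|_{L^1}+Q^{1+\varepsilon}T\|f\|_{L^2}\|g\|_{L^2}$ cannot be obtained uniformly in $T$, and it is in fact false.

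\emph{Why your bookkeeping fails.} Absorbing the bad frequencies via $\ell^\infty$ bounds on $\hat f,\hat g$ requires $Q^{2+\varepsilon}D^{-B}\lesssim Q^2T$, i.e.\ $D\gtrsim T^{-1/B}$. The good part then costs $QTD\gtrsim QT^{1-1/B}$. The hypothesis $Q\le T^{-1/2}$ only gives $T\le Q^{-2}$, never $T\gtrsim Q^{-C}$; in Lemma~\ref{lem:full time convolution estimate}, for instance, $Q\approx\log N$ with $T\approx N^{-2}$ occurs. Your fallback $D=T^{-\varepsilon}$ does not help. The inequality $Q\le T^{-1/2}$ gives $Q^{2\varepsilon}\le T^{-\varepsilon}$, the opposite of the direction you need. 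A $T^{-\varepsilon}$ loss becomes an $N^{c\varepsilon}$ loss after summing over dyadic $T\ge N^{-2}$, which is exactly what the lemma must avoid.

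\emph{Counterexample to the endpoint.} Let $M=\operatorname{lcm}\{q:q\sim Q\}$ and let $F_J(s)=\sum_{|j|\le J}(1-\frac{|j|}{J+1})\me^{2\pi\mi js}$ be the Fej\'er kernel. Take $f=g=F_J(M\cdot)$ with $JM\le cT^{-1}$; this is possible with $J\gg Q$ once $T\lesssim \me^{-CQ}$. Every frequency $n$ in the support of $\hat f$ is divisible by every $q\sim Q$. So each Ramanujan sum $\sum_{\gcd(a,q)=1}\me^{2\pi\mi an/q}$ equals $\#\{0\le a<q:\gcd(a,q)=1\}$, and $\hat\Lambda_{Q,T}(n)\approx Q^2T$. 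Hence $\langle f,f*\Lambda_{Q,T}\rangle\approx Q^2TJ$, whereas your right-hand side is $\approx Q^2T+Q^{1+\varepsilon}TJ$, which is too small by a factor $Q^{1-\varepsilon}$. The lemma itself at $p>2$ survives this example, since $\|f\|_{L^{p'}}\approx J^{1/p}$ and $JT\lesssim M^{-1}$.

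\emph{No valid splitting exists.} For the same reason, there is no splitting $\Lambda_{Q,T}=\Lambda'+E$ with $\|E\|_{L^\infty}\lesssim Q^2T$ and $\Lambda'$ of multiplier norm $Q^{1+\varepsilon}T$. The kernel $E$ would have to carry $\approx(TM)^{-1}$ Fourier coefficients of size $\approx Q^2T$, which contradicts $\|E\|_{L^2}\le\|E\|_{L^\infty}$. Lemma~\ref{lem:distribution} cannot supply such an $E$, because its count carries the factor $T^{-1}$.

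The missing idea is to aim at an endpoint $p_0>2$ and let the threshold $D$ depend on $f$ and $g$.
\begin{itemize}
\item The paper takes $f=\chi_E$, $g=\chi_F$.
\item It controls the bad frequencies by H\"older and Hausdorff--Young in $\ell^{2q'}$. This gives $(\#\text{bad})^{1/q}(|E||F|)^{\frac12+\frac1{2q}}$ rather than $\#\text{bad}\cdot|E||F|$.
\item It chooses $D=Q(A^2|E||F|)^{1/(2(B+q))}$ with $A=T^{-1}Q^{-B}\sum_{\ell\le Q^B}d(\ell)^B/\ell$.
\end{itemize}
The factor $T^{-1}$ from the count is then absorbed by scaling. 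The bound becomes $T^{1-\frac1{B+q}}(|E||F|)^{\frac12(1+\frac1{B+q})}$, which is exactly $T^{2/p_0}$ at $p_0=2/(1-\frac1{B+q})$, with loss only in $Q$. This restricted-type estimate is then upgraded to $L^{p'}$ for $p>p_0$ by the level-set decomposition of Lemma~\ref{lem:atomic decomposition}, using $\|\Lambda_{Q,T}\|_{L^\infty}\lesssim1$ for the pieces with $k+\ell\ge j_0$.

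Separately, your interpolation bounds the Fourier coefficients of an operator built from $\Lambda_{Q,T}$ by those of $\Theta_{Q,T}$. The pointwise inequality $\Lambda_{Q,T}\le\Theta_{Q,T}$ does not justify this. You also cannot simply switch to $\Theta_{Q,T}$, since $\|\Theta_{Q,T}\|_{L^\infty}\approx Q$ rather than $O(1)$.
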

\begin{proof}
	We start from the special case $f=\chi_E,g=\chi_F$ for $E,F\subset[0,1]$ and some $p_0\in (2,p)$. Using \eqref{est:time cut-off} \eqref{est:time cut-off fourier} and taking Fourier transform, the LHS is bounded by
	\begin{equation}
		Q T \sum_{n \in \mathbb Z}  |\hat\chi_E(n)| |\hat\chi_F(n)| |\hat\varphi(T n )| d(n;Q).
	\end{equation}
	From Lemma~\ref{lem:distribution}, the distribution of $d(n;Q)$ \begin{equation}
		\# \{1\leq n \leq 2^kT^{-1} : d(n;Q)> D \} \leq 2^kA(D/Q)^{-B}  \ \text{ with }\ A=T^{-1} Q^{-B} \sum_{\ell=1}^{Q^B} \frac{d(\ell)^B}\ell.\end{equation}
	Separating the summation by $d(n;Q) \leq D$ and $d(n;Q)>D$ to get that
	\begin{multline}
		Q T \bigg[ Q |\hat\chi_E(0)| | \hat\chi_F(0) | + D \sum_{d(n;Q)\leq D } |\hat\chi_E(n)| | \hat\chi_F(n)|  \\ + Q \sum_{k=0}^\infty \sum_{\substack{n \sim 2^k T^{-1} \\ d(n;Q)>D}} |\hat\chi_E(n)| | \hat\chi_F(n)| |\hat\varphi(T n )| \bigg] ,
	\end{multline}
	which is further bounded by
	\begin{equation}
		Q^2 T\Big[ |E| |F| + D/Q |E|^{\frac12}|F|^{\frac12} +  (D/Q)^{-\frac Bq} A^{\frac1q} |E|^{\frac12+\frac1{2q}} |F|^{\frac12+\frac1{2q}} \Big],
	\end{equation}
	for any $q \in [1,\infty)$ since $|\hat\varphi|$ rapidly decays. Take $D = Q ( A^2|E| |F| )^{ \frac1{2(B+q)} }$ to get the bound
	\begin{align}
		    & Q^2T |E| |F| +  Q^2 T A^{\frac 1{B+q}} (|E| |F|)^{ \frac 12  (1 + \frac1{B+q}) }                                                                                   \\
		={} & Q^2T |E| |F| +  T^{1-\frac 1{B+q}} Q^{2-\frac B{B+q}}  ( |E| |F| )^{\frac 12 (1+\frac 1{B+q})} \Big(\sum_{\ell=1}^{Q^B} \frac{d(\ell)^B}{\ell}\Big)^{\frac1{B+q}}.
	\end{align}
	Recall that $d(\ell) = \exp(O(\log \ell/\log\log\ell))$,
	we get the desired bound \eqref{convolution-bound} for
	\begin{equation}
		p_0= \frac2{1-1/(B+q)},\quad \varepsilon > \frac{p_0}2 \Big(2-\frac B{B+q}\Big) -1.
	\end{equation}

	For general $f,g$ and $p>p_0$ we apply Lemma \ref{lem:atomic decomposition} below  to write
	\begin{equation}\label{fg-decomposition}
		f = \sum_kf_k \chi_{E_k},\ g = \sum_\ell g_\ell \chi_{F_\ell}
	\end{equation}
	with
	\begin{align}
		 & |E_k| \leq 2^{-k},       &  & |f_k\chi_{E_k}|\leq \lambda_k2^{k/p'},         &  & \sum_k \lambda_k^q 2^{k(\frac q{p'}-1)} \lesssim \| f\|_{L^q}^q, \label{decomposition-fp}      \\
		 & |F_\ell| \leq 2^{-\ell}, &  & |g_\ell\chi_{F_\ell}|\leq \mu_\ell2^{\ell/p'}, &  & \sum_\ell \mu_\ell^q 2^{\ell(\frac q{p'}-1)} \lesssim \| g\|_{L^q}^q. \label{decomposition-gp}
	\end{align}
	Then
	\begin{equation}
		\langle f,g*\Lambda_{Q,T} \rangle = \sum_{k+\ell<j_0} \langle f_k,g_\ell*\Lambda_{Q,T} \rangle + \sum_{k+\ell\geq j_0 } \langle f_k,g_\ell*\Lambda_{Q,T} \rangle. \label{sum1+2}
	\end{equation}
	For the second term in \eqref{sum1+2}, we use $L^\infty$ bound on $\Lambda_{Q,T}$ and $L^1$ on $f_k, g_l$ to obtain
	\begin{equation}
		\sum_{k+\ell\geq j_0} |\langle f_k,g_\ell*\Lambda_{Q,T} \rangle | \lesssim \sum_{k+\ell\geq j_0} \lambda_k \mu_\ell 2^{ -\frac{k+\ell}{p} } \lesssim 2^{-\frac{j_0}p}\| \lambda_k \|_{\ell^{p'}} \| \mu_\ell \|_{\ell^{p'} }. \label{bound-kl-large}
	\end{equation}
	For the first term in  \eqref{sum1+2}, we apply \eqref{convolution-bound} with exponent $p_0$ for characteristic functions and get
	\begin{align}
		\sum_{k+\ell<j_0} |\langle f_k,g_\ell*\Lambda_{Q,T}| \rangle & \lesssim \sum_{k+\ell<j_0} \lambda_k\mu_\ell2^{\frac{k+\ell}{p'}} \Big[ Q^2T 2^{-(k+\ell)} + Q^{\frac{2(1+\varepsilon)}{p_0}}T^{\frac2{p_0}} 2^{-\frac{k+\ell}{p_0'}}\Big] \\&
		\lesssim Q^2T \sum_{k+\ell<j_0} \lambda_k \mu_\ell 2^{-\frac{k+\ell}{p}} + Q^{\frac{2(1+\varepsilon)}{p_0}}T^{\frac2{p_0}} \sum_{k+\ell<j_0} \lambda_k\mu_\ell 2^{\frac{k+\ell}{p'}-\frac{k+\ell}{p_0'}}                                  \\&
		\lesssim Q^2T \| f \|_{L^1} \| g \|_{L^1} + Q^{\frac{2(1+\varepsilon)}{p_0}}T^{\frac2{p_0}} 2^{j_0(\frac1{p_0}-\frac 1p)} \| \lambda_k\|_{p'} \| \mu_\ell \|_{p'}. \label{bound-kl-small}
	\end{align}
	Choosing $j_0$ to be the smallest positive integer satisfying
	\begin{equation}
		Q^{\frac{2(1+\varepsilon)}{p_0}}T^{\frac2{p_0}} 2^{j(\frac1{p_0}-\frac 1p)} \geq 2^{-\frac jp},
	\end{equation}
	and combining \eqref{bound-kl-large} and \eqref{bound-kl-small} with the chosen $j_0$,
	we get the desired bound \eqref{convolution-bound}.
\end{proof}
\begin{lem}[Atomic decomposition]\label{lem:atomic decomposition}
	For any $f\in L^p([0,1])$, there exists a sequence of sets $\{E_k\}$ and positive numbers $\{\lambda_k\}$ and functions $\{f_k\}$ such that $f=\sum f_k\chi_{E_k}$ and
	\begin{equation}
		|E_k| \leq 2^{-k},\quad |f_k\chi_{E_k}|\leq \lambda_k 2^{k/p}, \quad \sum_k \lambda_k^q 2^{k(\frac qp-1)} \lesssim \| f\|_{L^q}^q,\quad \forall q\geq1.
	\end{equation}
\end{lem}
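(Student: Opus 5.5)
The plan is to build the decomposition from the decreasing rearrangement of $|f|$, using dyadic layers of the distribution function. Let $f^*$ be the decreasing rearrangement of $|f|$ on $[0,1]$, so that $f^*$ is nonincreasing, right-continuous, and equimeasurable with $|f|$; in particular $\|f^*\|_{L^q}=\|f\|_{L^q}$ for every $q$. Since $f\in L^p$, $f^*(s)<\infty$ for every $s>0$.

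First I construct a nested family of sets $A_s\subset[0,1]$, $s\in[0,1]$, with the following properties:
\begin{itemize}
\item $|A_s|=s$ and $A_s\subset A_{s'}$ for $s\le s'$;
\item $|f|\ge f^*(s)$ a.e. on $A_s$ and $|f|\le f^*(s)$ a.e. off $A_s$.
\end{itemize}
Away from ties this is just $A_s=\{|f|>f^*(s)\}$. When a level set $\{|f|=c\}$ has positive measure, I take as much of it as needed by intersecting with $[0,r]$ and choosing $r$ by continuity of $r\mapsto|\{|f|=c\}\cap[0,r]|$, which uses that Lebesgue measure is nonatomic. Choosing $r$ monotonically in $s$ keeps the family nested. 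Alternatively, one may invoke Ryff's theorem to write $|f|=f^*\circ\sigma$ for a measure-preserving $\sigma$, and set $A_s=\sigma^{-1}([0,s))$. I expect this tie-breaking step to be the only real technical point; the rest is bookkeeping.

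Next, for $k\ge0$, I set
\[
E_k=A_{2^{-k}}\setminus A_{2^{-k-1}},\qquad f_k=f,\qquad \lambda_k=2^{-k/p}f^*(2^{-k-1}).
\]
Because $|A_{2^{-k}}|\to0$, the sets $E_k$ partition $[0,1]$ up to a null set. Hence $f=\sum_k f\chi_{E_k}$ a.e., with at most one nonzero term at each point. Also $|E_k|\le|A_{2^{-k}}|=2^{-k}$. On $E_k$ we are off $A_{2^{-k-1}}$, so $|f|\le f^*(2^{-k-1})=\lambda_k2^{k/p}$, which is the required pointwise bound.

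Finally, the summability follows from monotonicity of $f^*$. For $s\in[2^{-k-2},2^{-k-1}]$ we have $f^*(s)\ge f^*(2^{-k-1})$, so
\[
\lambda_k^q2^{k(\frac qp-1)}=2^{-k}f^*(2^{-k-1})^q\le 4\int_{2^{-k-2}}^{2^{-k-1}}f^*(s)^q\dif s.
\]
The intervals $[2^{-k-2},2^{-k-1}]$ are disjoint for different $k$. Summing over $k\ge0$ therefore gives
\[
\sum_k\lambda_k^q2^{k(\frac qp-1)}\le4\|f^*\|_{L^q}^q=4\|f\|_{L^q}^q
\]
for every $q\ge1$, with constant $4$ independent of $q$. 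The right-hand side may be infinite when $f\notin L^q$, in which case the inequality is trivial.
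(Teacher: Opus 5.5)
Your argument is correct and is essentially the paper's proof: the paper's $\eta_k=\inf\{\eta>0:|\{|f|>\eta\}|<2^{-k}\}$ is (up to a left limit) your $f^*(2^{-k})$, and its choices $E_k=\{\eta_k<|f|\le\eta_{k+1}\}$ and $\lambda_k=\eta_{k+1}2^{-k/p}$ match yours; the only difference is that it bounds $\sum_k\eta_{k+1}^q2^{-k}$ with the layer-cake formula, whereas you integrate $f^*$ over dyadic intervals. The tie-breaking step you single out as the main technical point is unnecessary, since the lemma only requires $|E_k|\le 2^{-k}$, and the half-open annuli $\{f^*(2^{-k})<|f|\le f^*(2^{-k-1})\}$, $k\ge0$, already satisfy this and cover $\{f\neq0\}$ up to a null set.
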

\begin{proof}
	We set
	\begin{equation}
		\eta_k = \inf\{ \eta>0: |\{ |f|>\eta \}|<2^{-k} \},\text{ and } E_k = \{ \eta_k<|f|\leq\eta_{k+1} \}
	\end{equation}
	Then
	\begin{equation}
		\sum_k \eta_k \chi_{E_k} \leq |f| \leq \sum_{k} \eta_{k+1} \chi_{E_{k}},
	\end{equation}
	hence
	\begin{align}
		\| f \|_{L^q}^q \leq \sum_k \eta^q_{k+1}  2^{-k}.
	\end{align}
	On the other hand, by definition we know $\eta<\eta_{k}$ implies $|\{|f|>\eta\}| \geq 2^{-k}$,
	\begin{align}
		\sum_k \eta^q_{k+1}2^{-k} = q\int_0^{\infty}  \eta^{q-1}\Big (\sum_k 2^{-k} \mathbf1_{\eta<\eta_{k+1}}\Big)\dif \eta \lesssim q\int_0^\infty \eta^{q-1} |\{|f|>\eta\}| \dif \eta.
	\end{align}
	It suffices to take $f_k = f\chi_{E_k}$ and $\lambda_k = \eta_{k+1}2^{-k/p}$.
\end{proof}

As a consequence we obtain the convolution estimate with kernel $\Lambda_N(t)$, while \cite[Lemma 2.8]{Killip2016} ensures the same convolution estimate for $\Lambda_N(\theta_j t)$.
\begin{lem}\label{lem:full time convolution estimate}
	Let $r,p>2$ such that
	\begin{equation}
		\frac 2p< d\Big(\frac 12-\frac 1r\Big)<2.\end{equation}
	Then for any $f,g\in L^{p'}([0,1])$, we have
	\begin{equation}
		|\langle f,g*\Lambda_N \rangle| \lesssim \Big(\frac{N^2}{Q_{\text{max}}}\Big)^{d(\frac 12-\frac1r)} \|f\|_{L^1}\|g\|_{L^1} + N^{2(d(\frac 12-\frac1r)-\frac2p)} \| f\|_{L^{p'} } \| g \|_{L^{p'}}.
	\end{equation}
\end{lem}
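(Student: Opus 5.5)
The estimate follows by splitting $\Lambda_N$ into its two parts in \eqref{est-Lr-lambda} and summing Lemma~\ref{lem:interpolated time convolution estimate} over dyadic $(Q,T)$. Write $\alpha = d(\frac12-\frac1r)$, so that $\frac2p<\alpha<2$, and decompose
\[
\Lambda_N(t)=\rho(t)\Big(\frac{N^2}{Q_{\max}}\Big)^{\alpha}+\sum_{Q,T}(QT)^{-\alpha}\Lambda_{Q,T}(t).
\]

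\emph{The $\rho$ term.} Since $\rho$ is bounded, Young's inequality gives $|\langle f,g*\rho\rangle|\lesssim \|f\|_{L^1}\|g\|_{L^1}$. This produces exactly the first term on the right-hand side.

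\emph{The major arc pieces.} The constraint $T\le T_{\max}=Q_{\max}/(QN^2)$ together with $Q\le Q_{\max}$ gives $Q^2T\le Q_{\max}/N^2\cdot Q\le \sigma$. Hence $Q\le T^{-1/2}$ up to a constant, and Lemma~\ref{lem:interpolated time convolution estimate} applies to each $\Lambda_{Q,T}$. It gives
\[
(QT)^{-\alpha}|\langle f,g*\Lambda_{Q,T}\rangle|\lesssim (QT)^{-\alpha}Q^2T\,\|f\|_1\|g\|_1+(QT)^{-\alpha}Q^{\frac{2(1+\varepsilon)}p}T^{\frac2p}\|f\|_{p'}\|g\|_{p'}.
\]

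\emph{Summing the $L^1$ part.} The sum is $\sum_{Q,T} Q^{2-\alpha}T^{1-\alpha}$. Because $\alpha<2$, the $Q$-exponent is positive; the $T$-exponent $1-\alpha$ may have either sign.
\begin{itemize}
\item If $\alpha>1$, the $T$-sum is dominated by its smallest value $T=N^{-2}$, giving $N^{2(\alpha-1)}Q^{2-\alpha}$. Summing over $Q\le Q_{\max}$ gives $N^{2\alpha-2}Q_{\max}^{2-\alpha}\lesssim N^{\alpha}$ when $Q_{\max}=\sigma N$. This is bounded by $(N^2/Q_{\max})^\alpha\approx N^\alpha$.
\item If $\alpha\le 1$, the $T$-sum is dominated by $T_{\max}$ (with at most a $\log N$ factor when $\alpha=1$). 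One gets $Q^{2-\alpha}(Q_{\max}/(QN^2))^{1-\alpha}=Q\,(Q_{\max}/N^2)^{1-\alpha}$. Summing over $Q\le Q_{\max}$ gives $Q_{\max}^{2-\alpha}N^{-2(1-\alpha)}\approx N^{\alpha}$, again bounded by the first term.
\end{itemize}
The log at $\alpha=1$ needs care: I would absorb it by slightly perturbing the exponents, or by noting that the $L^1$ bound is only used for the small-measure part. Either way, every major-arc $L^1$ contribution is at most $(N^2/Q_{\max})^{\alpha}\|f\|_1\|g\|_1$.

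\emph{Summing the $L^{p'}$ part (main step).} The sum is $\sum_{Q,T}Q^{\frac{2(1+\varepsilon)}p-\alpha}T^{\frac2p-\alpha}$.
\begin{itemize}
\item Since $\alpha>\frac2p$, the $T$-exponent is negative. The $T$-sum is therefore dominated by $T=N^{-2}$ and equals $\approx N^{2(\alpha-\frac2p)}$.
\item The $Q$-exponent $\frac{2(1+\varepsilon)}{p}-\alpha$ is negative once $\varepsilon$ is chosen small enough that $\frac{2(1+\varepsilon)}p<\alpha$, which is possible by the strict inequality $\frac2p<\alpha$. The $Q$-sum is then a convergent geometric series dominated by $Q=1$.
\end{itemize}
This yields $N^{2(\alpha-\frac2p)}\|f\|_{p'}\|g\|_{p'}$, the second term. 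The point I expect to be delicate is that Lemma~\ref{lem:interpolated time convolution estimate} loses $Q^{2\varepsilon/p}$. One must check that $\varepsilon$ can indeed be taken arbitrarily small for the fixed $p$, since the lemma's $\varepsilon$ depends on the auxiliary $B,q$ and on $p_0<p$. Here the strictness of both inequalities in the hypothesis is used. Adding the two parts gives the claim.
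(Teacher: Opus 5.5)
You follow the paper's proof: bound the $\rho$ term by $\|\rho\|_\infty\|f\|_{L^1}\|g\|_{L^1}$, apply Lemma~\ref{lem:interpolated time convolution estimate} to each $\Lambda_{Q,T}$, and sum dyadically in $(Q,T)$. Your check $Q^2T\le Q\,Q_{\max}/N^2\le\sigma^2$ verifies a hypothesis of that lemma which the paper uses without comment. The $L^{p'}$ sum is handled exactly as in the paper, by taking $\varepsilon$ small enough that $\frac{2(1+\varepsilon)}{p}<\alpha$. Your worry there resolves favourably: in the proof of Lemma~\ref{lem:interpolated time convolution estimate} the admissible $\varepsilon$ is any number larger than $\frac{q+1}{B+q-1}$, and $p_0\to 2$ as $B\to\infty$, so for fixed $p>2$ both $p_0<p$ and small $\varepsilon$ are available.

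The one loose end is $\alpha=1$, and it is not a side case. It is exactly $d=4$, $r=p=\frac{2(d+4)}{d}=4$ in Theorem~\ref{thm:main sharp Strichartz estimate on d-dimensional tori}. You anticipate a $\log N$ loss there and offer to perturb exponents or to appeal to how the bound is used later. Neither would prove the lemma as stated, and neither is needed.

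At $\alpha=1$ the summand $Q^{2-\alpha}T^{1-\alpha}$ equals $Q$. For fixed $Q$ the dyadic range $T\in[N^{-2},\,Q_{\max}/(QN^2)]$ has only $O\big(1+\log(Q_{\max}/Q)\big)$ elements, not $O(\log N)$. Hence
\[
\sum_{Q,T} Q^{2-\alpha}T^{1-\alpha}\lesssim \sum_{Q\le Q_{\max}} Q\Big(1+\log\frac{Q_{\max}}{Q}\Big)\lesssim Q_{\max}\le \frac{N^2}{Q_{\max}},
\]
so the logarithm is absorbed by the geometric decay in $Q$. This is what the paper's rewriting of the summand in terms of $(Q/Q_{\max})^{2-\alpha}(N^2T)^{1-\alpha}$, with $1\le N^2T\le Q_{\max}/Q$, encodes. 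It treats $\alpha<1$, $\alpha=1$ and $\alpha>1$ uniformly.
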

\begin{proof}
	Form the definition of $\Lambda_N$ and Lemma \ref{lem:interpolated time convolution estimate} we see that $  |\langle f,g*\Lambda_N \rangle|$ is controlled by
	\begin{align}
		\sum_{Q=1}^{Q_{\text{max}}} \sum_{T=1/N^{2}}^{T_{\text{max}}} & (QT)^{-d(\frac12-\frac1r)} | \langle f,g*\Lambda_{Q,T} \rangle| + \Big( \frac{N^2}{Q_{\text{max}}} \Big)^{d(\frac 12-\frac 1r)} | \langle f,g*\rho \rangle| \\
		\lesssim {}
		\sum_{Q=1}^{Q_{\text{max}}} \sum_{T=1/N^{2}}^{T_{\text{max}}} & (QT)^{-d(\frac12-\frac1r)}  \Big[Q^2T \|f\|_{L^1}\|g\|_{L^1} +  Q^{\frac{2 (1+\varepsilon)}p} T^{\frac 2p} \| f\|_{L^{p'} } \| g \|_{L^{p'}}\Big]           \\
		                                                              & + \Big(\frac{N^2}{Q_{\text{max}}}\Big)^{d(\frac 12-\frac1r)} \|f\|_{L^1}\|g\|_{L^1}.
	\end{align}
	Clearly, we have the summations
	\begin{align}
		\sum_{Q,T} (QT)^{-d(\frac12-\frac1r)} Q^2T & = \Big(\frac {N^2}{Q_{\text{max}}}\Big)^{d(\frac 12-\frac1r)} \sum_{Q,T} \Big(\frac Q{Q_{\text{max}}}\Big)^{2-d(\frac 12-\frac1r)} (N^2T)^{1-d(\frac 12-\frac1r)} \\&\lesssim \Big(\frac{N^2}{Q_{\text{max}}}\Big)^{d(\frac 12-\frac1r)},
	\end{align} and
	\begin{align}
		\sum_{Q,T} (QT)^{-d(\frac12-\frac1r)} Q^{\frac{2 (1+\varepsilon)}p} T^{\frac 2p} & = \sum_Q Q^{-d(\frac 12-\frac1r) + \frac{2(1+\varepsilon)}{p}} \sum_{T} T^{-d(\frac 12-\frac1r) + \frac 2p} \\&\lesssim N^{2(d(\frac 12-\frac 1r)-\frac 2p)} ,
	\end{align}
	as long as
	$\varepsilon$ is sufficiently small.
	This completes the proof of the lemma.
\end{proof}

Our final step of proving sharp Strichartz estimate is the standard $TT^*$ argument.
\begin{proof}[Proof of Theorem~\ref{thm:main sharp Strichartz estimate on d-dimensional tori}]
	For any $u_0\in L^2_x(\mathbb{T}^2)$,
	\begin{align}
		\| P_{\leq N} \me^{\mi t\Box } u_0 \|_{{L_{t,x}^{p}([0,1]\times \mathbb T^{d})}} & = \sup_{\substack{ \| F \|_{ {L_{t,x}^{{p'}}([0,1]\times \mathbb T^{d})} } \leq 1 \\F\neq0}} |\langle P_{\leq N} \me^{\mi t\Box} u_0, F\rangle | \\& \leq \sup_F \| u_0\|_{L^2}| \langle F,F*K_N\rangle|^{1/2} \label{Strichartz-p1}\\&
		\leq \| u_0 \|_{L^2} \sup_F \| F*K_N \|_{{L_{t,x}^{p}([0,1]\times \mathbb T^{d})}}^{1/2},
	\end{align}
	by taking $Q_{\text{max}} = \sigma N$ for $\sigma$ small enough and $r=p=\frac{2(d+4)}{d}$ in Lemma \ref{lem:full time convolution estimate}\begin{align}
		\| F*K_N \|_{{L_{t,x}^{p}([0,1]\times \mathbb T^{d})}} & =\bigg\| \iint K_N (t',x') F (t-t',x-x')  \dif x' \dif t' \bigg\|_{{L_{t,x}^{p}([0,1]\times \mathbb T^{d})}} \\&
		\leq \bigg\| \int \bigg\| \int K_N (t',x') F(t-t',x-x') \dif x'\bigg\|_{L^p_{x} (\mathbb{T}^d)} \dif t' \bigg\|_{L^p_t([0,1])}                                        \\&
		\lesssim \sum_{j=1}^d \bigg\| \int \Lambda_N(\theta_jt') \| F(t-t',\cdot) \|_{L^{p'}_{x} (\mathbb{T}^d)} \dif t' \bigg\|_{L^p_t([0,1])} \label{Strichartz-p2}         \\&
		\lesssim \Big( N^{d(\frac 12 - \frac 1p) }+ N^{2(d(\frac12-\frac 1p)-\frac2p)} \Big) \| F \|_{{L_{t,x}^{p'}([0,1]\times \mathbb T^{d})}}.\label{Strichartz-proof}
	\end{align}
	We applied \eqref{est:kernel-Lr} and \eqref{est-Lr-lambda} to pass from the second to the third line. The step from the third to the last line follows from Lemma \ref{lem:full time convolution estimate} and a duality argument, together with the trivial inequality $\| F \|_{L_t^1([0,1])} \leq \| F \|_{L_t^{p'}([0,1])}$.
\end{proof}
\begin{remark}
	It is clear that only in the last step, the power $p=\frac{2(d+4)}{d}$ becomes special. It is exactly the power that balances the contribution from two terms in \eqref{Strichartz-proof}.
\end{remark}

\section{Bilinear Estimates}\label{Sec:bilinear}
In the remainder of this paper, we focus on the two-dimensional square torus $\mathbb{T}^2$.
\subsection{Line concentrated functions}As shown in \cite{wang2013},   the main obstruction to improving the Strichartz estimate for the hyperbolic Schr\"{o}dinger equation is the contributions coming from those frequencies localized on the diagonal of $\mathbb Z^2$, resulting worse $L^p$ estimate  than elliptic Schr\"{o}dinger for small exponent $p$. Whereas, the diagonal part admits much smaller Fourier support, which reduces the loss in $L^p$ estimate for large exponent $p$.
\begin{lem}
	Let $\ell \subset \mathbb{Z}^2$ be a line. Suppose $\phi_1,\phi_2\in L^2_x(\mathbb{T}^2)$ such that the Fourier support of $\phi_1\cdot\phi_2$ is contained in $\ell\cap C$, with $C$ being a square of sidelength $2N$. \begin{enumerate}
		\item We have the estimate
		      \begin{equation}\label{estv2:Bernstein estimate for diagonal part}
			      \| \phi_1\cdot \phi_2 \|_{L^\infty_x(\mathbb{T}^2)} \lesssim N \| \phi_1 \|_{L^2_x(\mathbb{T}^2)} \| \phi_2 \|_{L^2_x(\mathbb{T}^2)}.
		      \end{equation}
		\item For any $\phi\in L^2_x(\mathbb{T}^2)$ and $M\geq 1$,
		      \begin{equation}\label{estv2:trilinear diagonal orthogonality}
			      \| \phi_1\phi_2 P_{\leq M}\phi \|_{L^2_x(\mathbb{T}^2)} \lesssim N^{1/2}M^{1/2} \| \phi_1 \|_{L^2_x(\mathbb{T}^2)} \| \phi_2 \|_{L^2_x(\mathbb{T}^2)} \| \phi \|_{L^2_x(\mathbb{T}^2)}.
		      \end{equation}
	\end{enumerate}
	\begin{proof}
		Since $\widehat{\phi_1\phi_2}=\hat\phi_1*\hat\phi_2$, by Young's inequality for convolution it holds that $\| \widehat{\phi_1\phi_2}\|_{\ell^\infty} \leq \| \hat\phi_1\|_{\ell^2}\| \hat\phi_2\|_{\ell^2}$, hence
		\begin{align}
			\| \phi_1\cdot\phi_2\|_{L^\infty_x(\mathbb{T}^2)} &
			\leq \| \widehat{\phi_1\phi_2}\|_{\ell^1} \leq
			\#(\ell\cap C)
			\| \hat\phi_1\|_{\ell^2}\| \hat\phi_2\|_{\ell^2}    \\& \lesssim N \| \phi_1 \|_{L^2_x(\mathbb{T}^2)} \| \phi_2 \|_{L^2_x(\mathbb{T}^2)}.
		\end{align}
		As for the second part of the Lemma, notice that the set $\{\ell'\}$ of all lines parallel with $\ell$ is a disjoint partition of $\mathbb{Z}^2$. Decomposing
		\begin{equation}
			\phi_1\phi_2P_{\leq M}\phi = \sum_{\ell'} \phi_1\phi_2P_{\ell'}P_{\leq M}\phi,
		\end{equation}
		we see that their Fourier support are disjoint with each other, by orthogonality
		\begin{align}
			\| \phi_1 \phi_2 P_{\leq M} \phi\|_{{L^2_{x}(\mathbb T^{2})}}^2 & = \sum_{\ell'} \| \phi_1\phi_2P_{\ell'}P_{\leq M}\phi\|_{{L^2_{x}(\mathbb T^{2})}}^2 \\&\leq \sum_{\ell'} \| \phi_1\phi_2\|_{{L^2_{x}(\mathbb T^{2})}}^2 \| P_{\ell'}P_{\leq M} \phi \|_{L^\infty_x(\mathbb{T}^2)}^2.
		\end{align}
		Bernstein's inequality indicates that $\| P_{\ell'}P_{\leq M} \phi \|_{L^\infty_x(\mathbb{T}^2)} \lesssim M^{1/2} \| P_{\ell'}P_{\leq M}\phi\|_{{L^2_{x}(\mathbb T^{2})}}$, while
		\begin{align}
			\| \phi_1\cdot\phi_2\|_{L^2_x(\mathbb{T}^2)}^2 & = \| \widehat{\phi_1\phi_2}\|_{\ell^2}^2 \leq
			\#(\ell\cap C)
			\| \widehat{\phi_1\phi_2}\|_{\ell^\infty}^2                                                    \\& \lesssim N \|\phi_1\|_{{L^2_{x}(\mathbb T^{2})}}^2\|\phi_2\|_{{L^2_{x}(\mathbb T^{2})}}^2.
		\end{align}
		Thus
		\begin{align}
			\| \phi_1\phi_1 P_{\leq M}\phi \|_{L^2_x(\mathbb{T}^2)} & = \bigg( \sum_{\ell'} \| \phi_1\phi_2P_{\ell'}P_{\leq M}\phi\|_{{L^2_{x}(\mathbb T^{2})}}^2 \bigg)^{1/2} \\& \lesssim N^{1/2}M^{1/2} \| \phi_1 \|_{L^2_x(\mathbb{T}^2)} \| \phi_2 \|_{L^2_x(\mathbb{T}^2)} \bigg( \sum_{\ell'} \| P_{\ell'}P_{\leq M} \phi \|_{{L^2_{x}(\mathbb T^{2})}}^2 \bigg)^{1/2}\\&  = N^{1/2}M^{1/2} \| \phi_1 \|_{L^2_x(\mathbb{T}^2)} \| \phi_2 \|_{L^2_x(\mathbb{T}^2)} \| \phi \|_{L^2_x(\mathbb{T}^2)}.
		\end{align}
	\end{proof}
\end{lem}
We will use \eqref{estv2:Bernstein estimate for diagonal part} to prove tail estimate of super-level sets in the next subsection, and \eqref{estv2:trilinear diagonal orthogonality} will be crucial in the proof of Proposition~\ref{est: nonlinear Duhamel terms}.
\subsection{Off-diagonal estimates}
\begin{defi}\label{def:off-diagonal}Suppose $\mathscr{J}$ is a bilinear or sesquilinear operator, we define the ``off-diagonal'' part of $\mathscr{J}$ by
	\begin{equation}\label{def:off-J}
		\widetilde{\mathscr{J}}(u_1,u_2) = \mathscr{J}(u_1,u_2) - \sum_{s\in\mathbb{Z}} \big(\mathscr{J}(P_{\ell_+^s}u_1,P_{\ell_+^s}u_2)+\mathscr{J}(P_{\ell_-^{s}}u_1,P_{\ell_-^s}u_2)\big),
	\end{equation}
	where
	\(\ell_{\pm}^{s} = \{ \xi \in \mathbb{Z}^2 \mid \xi \cdot (1,\pm1) = s \}. \)
\end{defi}

This definition is inspired by the operator $\mathscr{J}_{\mathrm{DS}}$ \eqref{def:JDS}, which admits an off-diagonal nature. Indeed, if we denote $\sqrt{\mathscr{J}_{\mathrm{DS}}}$ the  Fourier multiplier operator with symbol $\sqrt{|m_{\mathrm{DS}}|}$, see~\eqref{def:JDS}.  Then $\widetilde{\mathscr{J}}_{\mathrm{DS}}=\mathscr{J}_{\mathrm{DS}}$ and  $\widetilde{\sqrt{\mathscr{J}_{\mathrm{DS}}}}=\sqrt{\mathscr{J}_{\mathrm{DS}}}$.
It turns out that the off-diagonal part shares similarities with elliptic NLS.

For simplicity, we make the following convention throughout this subsection.
\begin{trivlist}
	\item[\hskip \labelsep {\bf Convention.}]
	Let $\phi_1,\phi_2 \in L^2_x(\mathbb{T}^2)$ satisfying $\| \phi_1 \|_{L^2_x(\mathbb{T}^2)} = \| \phi_2 \|_{L^2_x(\mathbb{T}^2)}=1$, and $u_1 = P_{C_1} \me^{\mi t\Box}\phi_1, u_2=P_{C_2}\me^{\mi t\Box}\phi_2$.  Here $C_1, C_2\subset \mathbb Z^2$ are squares of sidelength $2N$.
\end{trivlist}

We have the tail estimate of super-level sets for linear waves.
\begin{lem}\label{lemv2:level set estimate for linear wave}
	Suppose $\phi\in L^2_x(\mathbb{T}^2)$, $q>4$ and $\lambda\gg N^{1/2}$, $C\subset \mathbb Z^2$ is a square of sidelength $2N$,  then
	\begin{equation}
		\label{estv2:level set estimate for linear wave}
		|\{ (t,x)\in[0,1]\times\mathbb{T}^2 : |P_{C}\me^{\mi t\Box}\phi|>\lambda \}| \lesssim \lambda^{-q} N^{q-4} .\end{equation}
\end{lem}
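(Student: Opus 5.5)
The plan is Bourgain's $TT^*$ argument for super-level sets, with the arc threshold $Q_{\text{max}}$ tuned to $\lambda$; the hypothesis $\lambda\gg N^{1/2}$ is what makes this tuning admissible. Let $E$ be the super-level set and $u=P_C\me^{\mi t\Box}\phi$ with $\|\phi\|_{L^2}=1$. Put $f=\frac{u}{|u|}\chi_E$. Then
\begin{equation}
	\lambda|E|\le\langle u,f\rangle\le \|P_C\me^{-\mi t\Box}f\|_{L^2_x},
	\qquad\text{so}\qquad
	\lambda^2|E|^2\le |\langle f, f* K_C\rangle|,
\end{equation}
where $K_C$ is the space-time kernel of $P_C\me^{\mi t\Box}$. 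Since $C$ is a translate of $[-N,N]^2$, $K_C$ differs from $K_N$ only by a modulation in $x$ and a harmless phase/Galilean shift. Hence $|K_C(t,x)|$ satisfies the same bounds as $|K_N|$, namely the $d=2$ case of Lemma~\ref{lem:Weyl's bound} combined with the arithmetic--geometric mean bound.

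Next I split $K_C=K_1+K_2$ via the partition of unity preceding \eqref{est-Lr-lambda}, with $Q_{\text{max}}:=c^{-1}N^2/\lambda^2$ for a small absolute constant $c$. The assumption $\lambda\gg N^{1/2}$ gives $Q_{\text{max}}\le\sigma N$, so this is a legitimate choice and the major arcs stay separated. Here $K_1$ is the part living on the major arcs $\Lambda_{Q,T}$ with $Q\le Q_{\text{max}}$, and $K_2$ is the $\rho$-part, which satisfies $\|K_2\|_{L^\infty_{t,x}}\lesssim N^2/Q_{\text{max}}=c\lambda^2$. Young's inequality then gives
\begin{equation}
	|\langle f,f*K_2\rangle|\lesssim c\lambda^2\|f\|_{L^1}^2=c\lambda^2|E|^2,
\end{equation}
which is absorbed into the left side. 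The same absorption handles the ``$Q^2T$'' contributions from $K_1$. On the arc $\Lambda_{Q,T}$ one has $|K_1|\lesssim (QT)^{-1}$, and $\|\Lambda_{Q,T}*g\|_\infty\lesssim\|g\|_1$. So this piece costs at most
\begin{equation}
	\sum_{Q\le Q_{\text{max}},\,T}(QT)^{-1}Q^2T\,|E|^2\lesssim Q_{\text{max}}\log N\,|E|^2\ll\lambda^2|E|^2,
\end{equation}
because $Q_{\text{max}}\le N\ll\lambda^2$.

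It remains to bound the genuinely dispersive part of $K_1$. For this I would use \eqref{est:kernel-Lr} with $r=p:=q$, so that $\|K_1(t)\|_{L^\infty_x}^{1-2/q}\lesssim\sum_{Q,T}(QT)^{-(1-2/q)}\Lambda_{Q,T}(t)$. I then apply the $L^{p'}$ half of Lemma~\ref{lem:interpolated time convolution estimate} to $F(t)=\|f(t)\|_{L^{q'}_x}=|E_t|^{1/q'}$, which satisfies $\|F\|_{L^{q'}_t}=|E|^{1/q'}$. The sums over $T\ge N^{-2}$ and $Q$ behave exactly as in Lemma~\ref{lem:full time convolution estimate} with $d=2$. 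The condition $\frac2q<1-\frac2q$ is precisely $q>4$, and it gives the bound $N^{2(1-\frac4q)}|E|^{2/q'}$. Combining with the absorbed pieces,
\begin{equation}
	\lambda^2|E|^2\lesssim N^{2-\frac8q}|E|^{2-\frac2q},
\end{equation}
which rearranges to $|E|\lesssim\lambda^{-q}N^{q-4}$, i.e.\ \eqref{estv2:level set estimate for linear wave}.

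The main obstacle is that Lemma~\ref{lem:interpolated time convolution estimate} produces both terms at once, for the same pair of time functions. I want the $Q^2T$ term measured with $L^1_x$ norms, so that it carries the factor $|E|^2$ and can be absorbed. I want the other term measured with $L^{q'}_x$ norms, so that it carries $|E|^{2/q'}$ together with the sharp power of $N$. A naive single choice of $r$ fails: $r=\infty$ loses a factor $N^2$, and $r=q$ leaves a non-absorbable $(N^2/Q_{\text{max}})^{1-2/q}|E|^{2/q'}$. I would handle this by re-running the proof of Lemma~\ref{lem:interpolated time convolution estimate} directly on the space-time form. The zero-frequency and large-divisor pieces, i.e.\ the $|\hat\chi_E(0)|$ term and the $L^\infty$ bound on $\Lambda_{Q,T}$, would be estimated with $L^1_{t,x}$ norms. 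The remaining Fourier-side piece would be estimated with the $L^{q'}_x$ norms through the atomic decomposition of Lemma~\ref{lem:atomic decomposition}. Since $f$ is essentially a normalized characteristic function of $E$, the atoms are essentially level sets of $|E_t|$, and both norms are controlled by powers of $|E|$. I expect this to require only bookkeeping, but it is the step I would check most carefully.
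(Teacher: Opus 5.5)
Your architecture is the paper's: a $TT^*$ bound, absorption of an $L^1\times L^1$ term using $\lambda^2\gg N$, extraction of $N^{2(1-4/q)}|E|^{2/q'}$ from the major arcs at $r=q$, and a Galilean reduction for general squares. You have also spotted something the paper's write-up passes over. It asserts \eqref{omega-est2} with $\|F\|_{L^1_{t,x}}$ in the first term, but \eqref{Strichartz-p2} together with Lemma~\ref{lem:full time convolution estimate} at $r=q$ only puts $\int_0^1\|F(t)\|_{L^{q'}_x}\dif t\le |E|^{1/q'}$ there. That yields $|E|\lesssim\lambda^{-q}(N^{q-4}+N^{\frac{q-2}{2}})$, which is the claim only for $q\ge 6$, so a mixed-norm bound really is needed. (Tuning $Q_{\text{max}}$ to $\lambda$ is unnecessary. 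The choice $Q_{\text{max}}=\sigma N$ leaves a term $\sigma^{-1}N|E|^2$, which $\lambda^2\gg N$ absorbs, and it avoids $Q_{\text{max}}<1$ when $\lambda\approx N$.)

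The gap is in your repair. The factor $Q^2T$ is the mean $\int_0^1\Lambda_{Q,T}\approx\hat\Theta_{Q,T}(0)$, so it comes only from the $n=0$ mode. It is not produced by $\|\Lambda_{Q,T}*g\|_{L^\infty}\lesssim\|g\|_{L^1}$. Suppose you estimate the $\|\Lambda_{Q,T}\|_{L^\infty}$ piece (the atoms with $k+\ell\ge j_0$) with $L^1_{t,x}$ norms and weight $(QT)^{-1}$. This costs up to $(QT)^{-1}|E|^2$ per arc; for example, if $t\mapsto |E_t|$ is supported on an interval of length $\le N^{-2}$, every pair of atoms has $k+\ell\ge j_0$. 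Since $\sum_{Q,T}(QT)^{-1}\gtrsim N^2$, the total is of order $N^2|E|^2$. The super-level set is empty unless $\lambda\lesssim N$, so such a term can never be absorbed into $\lambda^2|E|^2$. The large-divisor piece is not even an $L^1$ quantity: it carries sub-linear powers of the measures.

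Only the $n=0$ term may go on the $L^1$ side. The small-divisor, large-divisor and $j_0$ pieces must stay with $r=q$ and $L^{q'}$ norms, because $D$ and $j_0$ are chosen to balance exactly these pieces against one another. To run two spatial exponents inside one time form, note that $r$ enters only through the pointwise bound
\[
	|\langle F(t),K_N(t-s)*F(s)\rangle|\le \|K_N(t-s)\|_{L^\infty_x}^{1-2/r}\,a(t)^{1/r'}a(s)^{1/r'},\qquad a(t)=|E_t|.
\]
So $r\in\{q,\infty\}$ can be chosen separately on each product $E_k\times E_\ell$ of atoms of $a$ from Lemma~\ref{lem:atomic decomposition}. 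On such a product the weight $\omega_r$ multiplies both terms of the characteristic-function case of Lemma~\ref{lem:interpolated time convolution estimate}. Write $X$ for the zero-mode term, $Y$ for the remaining term, and $Z=|E_k||E_\ell|$ for the $L^\infty$ bound. Then
\[
	\min(\omega_\infty,\omega_q)\,\min\{X+Y,Z\}\le \omega_\infty X+\omega_q\min\{Y,Z\}.
\]
The first part gives $Q|E|^2$ per $(Q,T)$, hence $\lesssim Q_{\text{max}}|E|^2$ in total. The $T$-sum has only $\log(Q_{\text{max}}/Q)$ terms, so no $\log N$ appears, contrary to your display. The second part is the paper's $j_0$ argument with coefficients $(2^{-k}\sup_{E_k}a)^{1/q'}$. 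Their $\ell^{q'}$ norm is $\lesssim |E|^{1/q'}$, which gives $N^{2(1-4/q)}|E|^{2/q'}$ and closes the argument.
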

Estimates of this type first appeared in \cite{Bourgain1993} for the elliptic Schr\"{o}dinger equation (for $d=2$, it is exactly the same bound as \eqref{estv2:level set estimate for linear wave}), see also~\cite{Killip2016}.  \cite{Başakoğlu-Wang2025-3} proved it for the hyperbolic Schr\"{o}dinger equation in the range $p>6$.   We reproduce the proof for completeness.
\begin{proof} Denote
	\[\Omega=\{ (t,x)\in[0,1]\times\mathbb{T}^2 : |P_{C}\me^{\mi t\Box}\phi|>\lambda\},\]
	\[\Omega_w=\{ (t,x)\in[0,1]\times\mathbb{T}^2 : \operatorname{Re} (\me^{\mi w}P_{C}\me^{\mi t\Box}\phi)>\lambda/2\},\]
	with $w\in \{\frac{k}{2}\pi, k=0,1,2,3\}$. Then $|\Omega|\leq 4|\Omega_w|$.   We denote $F=\chi_{\Omega_w}$.

	Let us start with the case $C=[-N,N]^2$. We have
	\begin{equation}\label{omega-est}
		\lambda^2 |\Omega_w|^2\lesssim  |\langle P_{\leq N} \me^{\mi t\Box} \phi, F\rangle |^2
		\leq \|\phi\|_{L^2}^2 \|K_N*F\|_{L^2_{t,x}}^2
		\lesssim  \|F\|_{L^{q'}_{t,x}}\|K_N*F\|_{L^q_{t,x}}.
	\end{equation}
	Then using \eqref{Strichartz-p2} for any $q>4$, and Lemma~\ref{lem:full time convolution estimate} with duality argument,  we get
	\begin{equation}\label{omega-est2}
		\|K_N*F\|_{L^q_{t,x}} \lesssim   N^{1 - \frac 2q }\| F \|_{{L_{t,x}^{1}([0,1]\times \mathbb T^{d})}}+ N^{2(1-\frac4q)}  \| F \|_{{L_{t,x}^{q'}([0,1]\times \mathbb T^{d})}}.
	\end{equation}
	We thus arrive at the estimate
	\begin{equation}\label{omega-est3}
		\lambda^2 |\Omega_w|^2\lesssim N^{1 - \frac 2q}|\Omega_w|^{1+\frac{1}{q'}}+ N^{2(1-\frac4q)}   |\Omega_w|^{\frac{2}{q'}},
	\end{equation}
	which immediately implies  $|\Omega_w|\lesssim \lambda^{-q}N^{q-4}$  by noticing $\lambda \gg N^{\frac12}$.

	Now consider the  general square $C$ with center $r_0\in \mathbb{R}^2$. Then $C_0= C-r_0=[-N,N]^2$. Setting  $\phi_0=\me^{-2\pi \mi r_0\cdot x}\phi(x)$, we get $P_C\phi=  \me^{2\pi \mi r_0\cdot x}P_{C_0}\phi_0$.
	By direct computation, which corresponds to the Galilean transformation for hyperbolic Schr\"{o}dinger equation~\cite{Başakoğlu-Wang2025-3},  we get
	\begin{equation}
		(\me^{\mi t\Box}P_C\phi)(t,x)=  \me^{2\pi \mi(x\cdot r_0+th(r_0))} (\me^{\mi t\Box}P_{C_0}\phi_0)(t, x+t \tilde{r}_0)
	\end{equation}
	with $h$ the symbol of $\Box$ and $\tilde{r}_0=\nabla h(r_0)$.

	Repeating \eqref{omega-est},   we get
	\begin{align}\label{omega-est4}
		\lambda^2 |\Omega_w|^2\lesssim & |\langle P_{C} \me^{\mi t\Box} \phi, F\rangle |^2 = |\langle P_{C_0} \me^{\mi t\Box} \phi_0, \tilde{F}\rangle |^2
	\end{align}
	with $\tilde{F}=  \me^{-2\pi \mi(x\cdot r_0+th(r_0))} F(t, x-t\tilde{r}_0)$.  Since $\tilde{F}$ and $F$ have same $L^{q'}$ norm, we get \eqref{omega-est2}\eqref{omega-est3}, leading  to the same conclusion.

\end{proof}

\begin{coro}
	Let $u_1,u_2$ be as in the Convention, $\mathscr{J}(u_1,u_2)= u_1u_2$ or $u_1\bar u_2$,  $q>2$ and $\lambda\gg N$, then
	\begin{equation}
		\label{estv2:level set estimate for large lambda}
		|\{ (t,x)\in[0,1]\times\mathbb{T}^2 : |\widetilde{\mathscr{J}}(u_1,u_2)|>\lambda \}| \lesssim \lambda^{-q} N^{2(q-2)} .
	\end{equation}
\end{coro}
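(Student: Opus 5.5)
We split $\widetilde{\mathscr{J}}(u_1,u_2)=\mathscr{J}(u_1,u_2)-D_+-D_-$, where
\[
D_\pm=\sum_s\mathscr{J}\big(P_{\ell_\pm^s}u_1,P_{\ell_\pm^s}u_2\big),
\]
and treat the full product and the two diagonal sums separately. For the full product, $|\mathscr{J}(u_1,u_2)|\le|u_1||u_2|$. The event $|u_1u_2|>\lambda/3$ forces $|u_1|>\sqrt{\lambda/3}$ or $|u_2|>\sqrt{\lambda/3}$. Since $\lambda\gg N$, we have $\sqrt\lambda\gg N^{1/2}$, so Lemma~\ref{lemv2:level set estimate for linear wave} applies with exponent $2q>4$ and gives
\[
|\{|u_j|>\sqrt{\lambda/3}\}|\lesssim \lambda^{-q}N^{2q-4}=\lambda^{-q}N^{2(q-2)},
\]
which is exactly the claimed bound. (Here $q>2$ is needed precisely so that $2q>4$.)

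For the diagonal pieces we use that for each $s$ the functions $P_{\ell_\pm^s}u_j$ have Fourier support on a single line. In the case $\mathscr{J}=u_1\bar u_2$, the product $P_{\ell^s_+}u_1\overline{P_{\ell^s_+}u_2}$ has frequencies $\xi-\eta$ with $\xi,\eta\in\ell^s_+$, so it lies on the line $\ell^0_+$ through the origin, inside a square of sidelength $\lesssim N$. In the case $\mathscr{J}=u_1u_2$, it lies on $\ell_+^{2s}$. The key point is that the time dependence is harmless. On $\ell^s_\pm$ the hyperbolic symbol $\xi_1^2-\xi_2^2=(\xi_1-\xi_2)(\xi_1+\xi_2)$ equals $s$ times a linear function of the frequency. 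Hence $\me^{\mi t\Box}$ restricted to one line acts as a translation composed with a phase, so in $x$ each piece is a product of two $L^2$ functions whose product has Fourier support on a line within a square of size $\sim N$.

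To control the whole sum $D_\pm$, we estimate directly on the Fourier side:
\[
\|D_\pm(t)\|_{L^\infty_x}\le\sum_s\big\|\widehat{P_{\ell^s}u_1}\big\|_{\ell^1}\big\|\widehat{P_{\ell^s}u_2}\big\|_{\ell^1}.
\]
By Cauchy--Schwarz on each line, which meets the square in $\lesssim N$ points, each factor $\|\widehat{P_{\ell^s}u_j}\|_{\ell^1}$ is at most $(2N)^{1/2}\|P_{\ell^s}\phi_j\|_{L^2}$. A further Cauchy--Schwarz in $s$ then gives
\[
\|D_\pm\|_{L^\infty_{t,x}}\lesssim N\,\|\phi_1\|_{L^2}\|\phi_2\|_{L^2}=N,
\]
uniformly in $t$. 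This is the same bound as \eqref{estv2:Bernstein estimate for diagonal part}, now summed over the parallel lines.

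Consequently, for $\lambda\ge CN$ with $C$ large, the set $\{|D_\pm|>\lambda/3\}$ is empty. It follows that
\[
\{|\widetilde{\mathscr{J}}|>\lambda\}\subset\{|\mathscr{J}(u_1,u_2)|>\lambda/3\},
\]
and the first paragraph finishes the proof. The only delicate step is the uniform $L^\infty$ bound on $D_\pm$, and the double Cauchy--Schwarz handles it. I expect no real obstacle beyond checking the Fourier-support geometry and the constant in ``$\lambda\gg N$''.
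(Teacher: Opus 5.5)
Your proof is correct and follows essentially the same route as the paper. You bound the diagonal sums by $\lesssim N$ in $L^\infty$ as in \eqref{estv2:Bernstein estimate for diagonal part}, with your Cauchy--Schwarz in $s$ making explicit the summation over parallel lines that the paper leaves implicit; for $\lambda\gg N$ this reduces the super-level set to $\{|u_1||u_2|\gtrsim\lambda\}$, and splitting at $\sqrt{\lambda}$ and applying Lemma~\ref{lemv2:level set estimate for linear wave} with exponent $2q>4$ finishes the argument.
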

\begin{proof}
	Lemma \eqref{estv2:Bernstein estimate for diagonal part} indicates that
	\begin{align}
		 & |\{ (t,x)\in[0,1]\times\mathbb{T}^2 : |\widetilde{\mathscr{J}}(u_1, u_2)|>\lambda \}| \\  \leq{}& |\{ (t,x)\in[0,1]\times\mathbb{T}^2 : |u_1|| u_2|>\lambda/2 \}| \\ \le{}& |\{  (t,x)\in[0,1]\times\mathbb{T}^2 : |u_1|>\alpha/2 \}| + |\{  (t,x)\in[0,1]\times\mathbb{T}^2 : |u_2|>\lambda/\alpha \}|,
	\end{align}
	the result follows by taking $\alpha=\lambda^{\frac12}$ and applying Lemma \ref{lemv2:level set estimate for linear wave}  for $2q$.
\end{proof}
Our next ingredient is the bilinear $L^2$ estimate for the off-diagonal part.
\begin{lem}
	\label{lemv2: Bilinear L2 estimate for projection on diagonal}
	Let $u_1,u_2$ be as in the Convention, and $\mathscr{J}(u_1,u_2) = u_1u_2$,  $u_1\bar u_2$, $\mathscr{J}_{\mathrm{DS}}(u_1, u_2)$, or $\sqrt{\mathscr{J}_{\mathrm{DS}}}(u_1,u_2)$, then we have
	\begin{align}
		\label{estv2:L2 bilinear estimate for projection on off-diagnoal}
		\|\widetilde{\mathscr{J}}(u_1, u_2)\|_{L^{2}_{t,x}([0,1]\times\mathbb{T}^2)} \lesssim & N^\varepsilon.
	\end{align}
	for any $\varepsilon>0.$
\end{lem}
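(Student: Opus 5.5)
We prove \eqref{estv2:L2 bilinear estimate for projection on off-diagnoal} by expanding $\|\widetilde{\mathscr{J}}(u_1,u_2)\|_{L^2_{t,x}}^2$ on the Fourier side and counting lattice points. Write $\hat\phi_i(k)=a_k$, $b_k$. Consider first $\mathscr{J}(u_1,u_2)=u_1\bar u_2$. Then
\[
\widetilde{\mathscr{J}}(u_1,u_2)=\sum_{\substack{k\in C_1,\,m\in C_2\\ k-m\notin \ell^0_+\cup\ell^0_-}} a_k\bar b_m\, \me^{2\pi\mi((k-m)\cdot x+(h(k)-h(m))t)},\qquad h(\xi)=\xi_1^2-\xi_2^2 .
\]
The diagonal terms removed in \eqref{def:off-J} are exactly the pairs with $k,m$ on a common line $\ell^s_\pm$, i.e.\ $k-m\in\ell^0_+\cup\ell^0_-$. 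We will, however, only use that those pairs are removed. Pairs on both lines at once would have $k=m$; this overlap is a harmless $O(1)$ bookkeeping issue, and the zero frequency is bounded trivially.

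Setting $n=k-m$, we have $h(k)-h(m)=(n_1+n_2)(k_1+m_1-k_2-m_2)/\ldots$. More precisely, $h(k)-h(m)=n_1(k_1+m_1)-n_2(k_2+m_2)$. Bounding the $L^2_t([0,1])$ norm by a smoothed time cutoff and using Plancherel in $x$ (integer frequencies) and almost-orthogonality in $t$, we obtain
\[
\|\widetilde{\mathscr J}\|_{L^2_{t,x}}^2\lesssim \sum_{n\notin \ell^0_\pm}\ \sum_{\tau\in\mathbb Z}\Big|\sum_{\substack{m:\ m+n\in C_1,\ m\in C_2\\ h(m+n)-h(m)=\tau}} a_{m+n}\bar b_m\Big|^2 .
\]
By Cauchy--Schwarz, this is at most
\[
\sup_{n\notin\ell^0_\pm,\ \tau}\#\{m\in C_2:\ h(m+n)-h(m)=\tau\}
\]
times $\|a\|^2\|b\|^2$.

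Everything therefore reduces to the key counting claim:
\[
\#\{m\in[-N,N]^2+c:\ 2n_1m_1-2n_2m_2=\tau-h(n)\}\lesssim N^{\varepsilon}\quad\text{for } n\notin\ell^0_\pm ?
\]
This is false as stated: the set is a line segment with direction $(n_2,n_1)/\gcd$, which can contain up to $N/\max(|n_1|,|n_2|)$ points. So the plan must change. Instead of taking the supremum, we use the bilinear structure (transference) together with a further decomposition.

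We decompose $C_1,C_2$ into squares of sidelength $M$ and $n$ dyadically by $|n|\sim L$. When $|n|\sim L$ and the segment direction is not $(1,\pm1)$, the number of points on a line $\{2n_1m_1-2n_2m_2=\text{const}\}$ inside a box of side $N$ is $\lesssim 1+N\gcd(n_1,n_2)/L$. The bad case is small $L$ with a large gcd, which is exactly a multiple of a short primitive vector.

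For those terms we switch to a different count. We fix $m$ and count pairs $(k,m')$ with $k-m'=k'-m$ and $h(k)-h(m)=h(k')-h(m')$, i.e.\ the quadruple count. In the variables $k\pm\,$ rotated coordinates $\xi_1\pm\xi_2$, we have $h(\xi)=(\xi_1+\xi_2)(\xi_1-\xi_2)$. The resonance condition $k+m'=k'+m$, $h(k)+h(m')=h(k')+h(m)$ becomes the product equation $(p_1-p_2)(q_1-q_4)=0$-type identities, i.e.\ $(u-u')(v-v')=0$ in the rotated coordinates $u=\xi_1+\xi_2$, $v=\xi_1-\xi_2$. That is precisely a diagonal (degenerate) configuration.

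Once the exact degeneracies are removed by the off-diagonal projection, the resonant quadruples reduce to solutions of $AB=c$ with $c\neq0$, $|A|,|B|\lesssim N$, fixing the other free parameters. The divisor bound $d(c)\lesssim N^{\varepsilon}$ then gives $\|\widetilde{\mathscr J}\|_{L^4}$-type counting, and hence the bilinear $L^2$ bound $N^\varepsilon$ via Cauchy--Schwarz on the quadrilinear form
\[
\sum a_k\bar b_m\bar a_{k'}b_{m'} .
\]
Concretely, we write $\|\widetilde{\mathscr J}\|_{L^2}^2$ as the sum over $(k,m,k',m')$ with $k-m=k'-m'\notin\ell^0_\pm$ and $h(k)-h(m)=h(k')-h(m')$. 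Setting $k=m+n$, $k'=m'+n$ in $(u,v)$ coordinates, with $n\leftrightarrow(\alpha,\beta)$, $\alpha\beta\neq0$, the equation becomes $\alpha(v_m-v_{m'})+\beta(u_m-u_{m'})=0$. This is the collinearity obstruction again, so we apply Schur's test with weights, splitting by $\gcd$.

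The case $u_m=u_{m'}$, $v_m=v_{m'}$ is the main term, and it gives $\sum|a|^2|b|^2\le1$. The remaining contributions are controlled by $\sum_{d}\#\{\ldots\}$ with the divisor bound, yielding $N^\varepsilon$.

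The main obstacle is this last step: organizing Schur's test so that the count of $(n,m')$ for fixed $(k,m)$ (or a symmetrized version) is $\lesssim N^\varepsilon$. For fixed $m,m'$ with $\delta=m-m'$, the admissible $n$ satisfy the single linear relation $\alpha\delta_v+\beta\delta_u=0$; for fixed $(m,n)$ the admissible $m'$ lie on a line. Neither is uniformly bounded. I would first try the weighted Schur test with weights $\frac{|a_k|}{|b_m|}$. Failing that, I would invoke Lemma~\ref{lemv2:level set estimate for linear wave} and Corollary~\eqref{estv2:level set estimate for large lambda} to handle the large-$\lambda$ part of the level-set distribution of $\widetilde{\mathscr J}$, reducing to the region $|\widetilde{\mathscr J}|\lesssim N$. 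There an $L^2$ bound follows by interpolating a weak-type estimate obtained from the counting above at a fixed level.

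For $u_1u_2$ the same argument applies with $m\mapsto -m$. For $\mathscr J_{\mathrm{DS}}$ and $\sqrt{\mathscr J_{\mathrm{DS}}}$ the multipliers are bounded, so Plancherel gives the result after reduction to $u_1\bar u_2$.
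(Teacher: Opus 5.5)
Your diagnosis that the difference parametrization leads to line counts of size $N/|n|$ is correct. The proposal never gets past it, though, so the lemma is not proved: the decisive counting estimate is left open, and the fallbacks you list do not supply it.

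\begin{itemize}
\item A Schur test in the variables $(n,m,m')$ has to beat exactly those line counts, and you give no mechanism for doing so.
\item The level-set route is empty. The Corollary only applies for $\lambda\gg N$, whereas $\|\widetilde{\mathscr J}\|_{L^2}^2=2\int_0^\infty\lambda\,|\{|\widetilde{\mathscr J}|>\lambda\}|\,\mathrm{d}\lambda$ is decided on $\lambda\lesssim N$, where the trivial bound only gives $N^2$. In the paper the logic in fact runs the other way: the $L^p$ bound for $\widetilde{\mathscr J}$ is deduced from this $L^2$ lemma plus the tail estimate.
\end{itemize}

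Two structural claims in your sketch are also wrong. Write the quartic form as a sum over $\Gamma$ with $(\xi_1,\xi_2,\xi_3,\xi_4)=(k,m,m',k')$.
\begin{itemize}
\item The resonance condition is not of product type $(u-u')(v-v')=0$. The quadruple $\xi_1=(4,1)$, $\xi_2=(-8,-7)$, $\xi_3=(-4,-1)$, $\xi_4=(8,7)$ lies in $\Gamma$ with $\xi_1-\xi_2\notin\ell^0_\pm$, yet the four points have pairwise distinct $u$- and $v$-coordinates. The true relation is the bilinear one $(\xi_1-\xi_4)^{t}A(\xi_1-\xi_2)=0$ with $A=\mathrm{diag}(1,-1)$.
\item The off-diagonal projection does not remove the degenerate configurations; it only removes pairs $\xi_1,\xi_2$ on a common line $\ell^s_\pm$. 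For example, $\xi_1=(1,0)$, $\xi_2=(1,1)$, $\xi_3=(0,1)$, $\xi_4=(0,0)$ is a surviving off-diagonal quadruple of $\Gamma$, with $\xi_1,\xi_3\in\ell_+^1$ and $\xi_2,\xi_4\in\ell_-^0$.
\end{itemize}
Finally, you never address uniformity in the position of $C_1\neq C_2$. Also, $m\mapsto-m$ is not a reduction of $u_1u_2$ to $u_1\bar u_2$, since $\bar u_2$ solves the time-reversed flow.

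The missing idea is to group the quartic form by \emph{sums} rather than differences. After reducing to $\hat\phi_i\ge0$, write
\[
\|\widetilde{\mathscr J}_1(u_1,u_2)\|_{L^2}^2=\sum_\Gamma\chi(\xi_1-\xi_2)\hat\phi_1(\xi_1)\hat\phi_2(\xi_2)\hat\phi_2(\xi_3)\hat\phi_1(\xi_4),
\]
where $\widetilde{\mathscr J}_1(u_1,u_2)$ is the off-diagonal part of $u_1\bar u_2$ and $\chi$ is the indicator of $\mathbb Z^2\setminus(\ell_+^0\cup\ell_-^0)$. Fix $\tilde\xi=\xi_1+\xi_3=\xi_2+\xi_4$ and $\tilde\tau=h(\xi_1)+h(\xi_3)=h(\xi_2)+h(\xi_4)$. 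Then $\xi_1$ and $\xi_2$ both lie on the conic $h(\xi-\tilde\xi/2)=\tilde\tau/2-h(\tilde\xi/2)$.
\begin{itemize}
\item If the right side is nonzero, the conic is a hyperbola. Since $x^2-y^2=(x+y)(x-y)$ and the right side is $O(N^2)$ when $C_1=C_2$, it carries $\lesssim N^\varepsilon$ relevant lattice points, and Cauchy--Schwarz gives $N^\varepsilon$.
\item If the right side is zero, the conic is $\ell_+^{s_+}\cup\ell_-^{s_-}$, and this is where $\chi$ is used. Since $\xi_1,\xi_2$ cannot share a line, $\xi_1,\xi_3\in\ell_+^{s_+}$ and $\xi_2,\xi_4\in\ell_-^{s_-}$ (or vice versa). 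The sum then factorizes over $(s_+,s_-)$, and Cauchy--Schwarz along lines bounds it by $\sum_s\|P_{\ell_+^s}\phi_1\|\|P_{\ell_+^s}\phi_2\|\cdot\sum_s\|P_{\ell_-^s}\phi_1\|\|P_{\ell_-^s}\phi_2\|\le1$.
\end{itemize}

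For $C_1\neq C_2$, the divisor bound is no longer uniform in the position of the squares. For this case, and for $u_1u_2$, the paper uses the identity $\sum_i(-1)^{i-1}h(\xi_i)=2(\xi_1-\xi_4)^tA(\xi_1-\xi_2)$ on $\Gamma$ to transfer the cutoff onto $\xi_1-\xi_4$. For $u_1\bar u_2$ this gives $\chi(\xi_1-\xi_2)\le\chi(\xi_1-\xi_4)$, apart from the quadruples with $\xi_1=\xi_4$, which contribute at most $1$; the case $u_1u_2$ is analogous. Cauchy--Schwarz in $(\xi_1-\xi_4,\,h(\xi_1)-h(\xi_4))$ then bounds everything by $\|\widetilde{\mathscr J}_1(u_1,u_1)\|_{L^2}\|\widetilde{\mathscr J}_1(u_2,u_2)\|_{L^2}$, i.e.\ by the equal-square case.

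Your reduction of $\mathscr J_{\mathrm{DS}}$ and $\sqrt{\mathscr J_{\mathrm{DS}}}$ to $u_1\bar u_2$ via Plancherel and $|m_{\mathrm{DS}}|\lesssim\chi$ is fine.
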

\begin{proof}
	By splitting $\hat{\phi}$ into  real and imaginary part, and further to positive and negative part,  we can assume $\hat{\phi}\geq 0$.
	We will use  the notation $\chi = \chi_{\mathbb{Z}^2 \setminus \ell_{\pm}^{0}}$,
	$\mathscr{J}_1(u_1, u_2)=u_1\bar{u}_2$, $\mathscr{J}_2(u_1, u_2)=u_1 {u}_2$.
	Denote  $h$ to be the symbol of $\Box$ $\Gamma$ to be the resonant set corresponding to the group $\{\me^{\mi t\Box}\}_{t\in\mathbb R}$
	\[
		\Gamma := \left\{(\xi_1,\xi_2,\xi_3,\xi_4)\in\mathbb{Z}^{2\times4} \Biggm | \sum_{i=1}^4 (-1)^i \xi_i = 0,\ \sum_{i=1}^4 (-1)^i h(\xi_i) = 0 \right\}.
	\]

	\noindent \textbf{Case 1:}   $\mathscr{J}(u_1,u_2)=\mathscr{J}_1(u_1,u_2)$ and $C_1=C_2$.

	By applying Galilean transformation, we further reduce to the case $C_1=C_2=[-N,N]^2$.
	The Fourier transform of $\widetilde{\mathscr{J}}(u_1, u_2)$ is written as
	\begin{equation} \label{identity:Fourier-ubarv}
		\mathcal{F}(\widetilde{\mathscr{J}}(u_1, u_2) )(\xi,\tau) = \sum_{\substack{\xi_1-\xi_2=\xi\notin\ell_{\pm}^{0}\\ h(\xi_1)-h(\xi_2)=\tau}} \hat\phi_1(\xi_1) \overline{\hat\phi_2(\xi_2)} .
	\end{equation}
	Hence by Plancherel theorem we have
	\begin{align}
		    & \| \widetilde{\mathscr{J}}(u_1, u_2) \|_{L^2_{t,x}([0,1]\times\mathbb{T}^2)}^2 =\sum_\Gamma \chi(\xi_1-\xi_2) \hat\phi_1(\xi_1)\overline{\hat\phi_2(\xi_2)} \hat\phi_2(\xi_3)\overline{\hat\phi_1(\xi_4)} \label{symbol-uu} \\
		={} & \sum_{({\tilde\xi},{\tilde\tau})\in \mathbb{Z}^2\times\mathbb{Z}} \chi(\xi_1-\xi_2) \Big( \sum_{\substack{\xi_1,\xi_3\in {\mathcal{A}_{{\tilde\xi},{\tilde\tau}}}                                                           \\\xi_1+\xi_3={\tilde\xi}}} \hat{\phi}_1(\xi_1)\hat{\phi}_2(\xi_3) \Big)\Big( \sum_{\substack{\xi_2,\xi_4\in {\mathcal{A}_{{\tilde\xi},{\tilde\tau}}}\\\xi_2+\xi_4={\tilde\xi}}} \overline{\hat{\phi}_2(\xi_2)\hat{\phi}_1(\xi_4) }\Big)\label{identity:L2ubarv}
	\end{align}
	here  ${\mathcal{A}_{{\tilde\xi},{\tilde\tau}}} = \{ \xi \in \mathbb{Z}^2 \mid h(\xi) + h({\tilde\xi} - \xi) = {\tilde\tau} \} = \{ \xi \in \mathbb{Z}^2 \mid h(\xi - {\tilde\xi}/2) = {\tilde\tau}/2 - h({\tilde\xi}/2) \}$.

	If ${\tilde\tau}/2 - h({\tilde\xi}/2)\neq0$, we see ${\mathcal{A}_{{\tilde\xi},{\tilde\tau}}}$ denotes a hyperbola in $\mathbb{Z}^2$. Notice that for given $z\neq0$, the number of integer solutions $(x,y)$ to the equation $x^2-y^2=(x+y)(x-y)=z$ is the same as the number of divisors of $z$, which is bounded by $\exp(O(\log|z|/\log\log|z|))$.
	We estimate that $\#({\mathcal{A}_{{\tilde\xi},{\tilde\tau}}} \cap [-N,N]^2) \lesssim N^\varepsilon$ and hence by Cauchy-Schwarz inequality
	\begin{align}
		            & \sum_{ {\tilde\tau}/2 - h({\tilde\xi}/2) \neq 0} \Big( \sum_{\substack{\xi_1,\xi_3\in {\mathcal{A}_{{\tilde\xi},{\tilde\tau}}}                                                                                                                              \\\xi_1+\xi_3={\tilde\xi}}} \hat{\phi}_1(\xi_1)\hat{\phi}_2(\xi_3) \Big)\Big( \sum_{\substack{\xi_2,\xi_4\in {\mathcal{A}_{{\tilde\xi},{\tilde\tau}}}\\\xi_2+\xi_4={\tilde\xi}}} \overline{\hat{\phi}_2(\xi_2)\hat{\phi}_1(\xi_4) }\Big) \\
		\lesssim {} & \sum_{{\tilde\tau}/2 - h({\tilde\xi}/2) \neq0} \#({\mathcal{A}_{{\tilde\xi},{\tilde\tau}}}\cap[-N,N]^2) \Big( \sum_{\xi\in {\mathcal{A}_{{\tilde\xi},{\tilde\tau}}}} |\hat{\phi}_1(\xi)|^2 |\hat{\phi}_2({\tilde\xi}-\xi) |^2 \Big) \lesssim N^\varepsilon.
	\end{align}

	If ${\tilde\tau}/2 - h({\tilde\xi}/2)=0$, we see that
	$ {\mathcal{A}_{{\tilde\xi},{\tilde\tau}}} = \ell_+^{s_+} \cup \ell_-^{s_-}$ with $s_\pm = (1,\pm1)\cdot {\tilde\xi}/2$. By the definition of $\widetilde{\mathscr{J}}$, one knows $\xi_1,\xi_2$ cannot lie on the same line.
	In the case $\xi_1 \in \ell_+^{s_+}$, $\xi_2\in \ell_-^{s_-}$, we have ${\tilde\xi} - \xi_1 \in \ell_+^{s_+}$, ${\tilde\xi}-\xi_2 \in \ell_-^{s_-}$, hence
	\begin{align*}
		\sum_{\xi_1 \in \ell_+^{s_+}} |\hat\phi_1(\xi_1)|  | \hat\phi_2({\tilde\xi}-\xi_1) | \leq{} & \Bigg(\sum_{\xi_1 \in \ell_+^{s_+}} |\hat\phi_1(\xi_1)|^2 \Bigg)^{1/2}\Bigg(  \sum_{\xi_1 \in \ell_+^{s_+}} | \hat\phi_2({\tilde\xi}-\xi_1) |^2 \Bigg)^{1/2} \\
		={}                                                                                         & \| P_{\ell^{s_+}_+}\phi_1 \|_{{L^2_{x}(\mathbb T^{2})}}\| P_{\ell^{s_+}_+}\phi_2 \|_{{L^2_{x}(\mathbb T^{2})}}.
	\end{align*}
	The same is true for $\xi_2$ and ${\tilde\xi}-\xi_2$. Hence
	\begin{align}
		 & \sum_{ {\tilde\xi}} \Big( \sum_{\substack{\xi_1,\xi_3\in  \ell_+^{s_+} \\\xi_1+\xi_3={\tilde\xi}}} \hat\phi_1(\xi_1) \hat\phi_2(\xi_3) \Big)\Big( \sum_{\substack{\xi_2,\xi_4\in \ell_-^{s_-}\\\xi_2+\xi_4={\tilde\xi}}} \overline{\hat\phi_2(\xi_2) \hat\phi_1(\xi_4) }\Big) \\ \leq{} &\sum_{s_+\in\mathbb Z} \| P_{\ell^{s_+}_+}\phi_1 \|_{{L^2_{x}(\mathbb T^{2})}} \| P_{\ell^{s_+}_+}\phi_2 \|_{{L^2_{x}(\mathbb T^{2})}} \sum_{\ s_-\in\mathbb Z} \| P_{\ell^{s_-}_-}\phi_1 \|_{{L^2_{x}(\mathbb T^{2})}}\| P_{\ell^{s_-}_-}\phi_2  \|_{{L^2_{x}(\mathbb T^{2})}} \\ \leq{}&1.
	\end{align}
	It's similar for  the case  $\xi_1 \in \ell_-^{s_-}$, $\xi_2 \in \ell_+^{s_+}$.

	\noindent \textbf{Case 2:}  $\mathscr{J}(u_1,u_2)=\mathscr{J}_1(u_1,u_2)$ and $C_1\not=C_2$.

	Notice that
	$(\xi_1,\xi_2,\xi_3,\xi_4)\in \Gamma$ implies \begin{equation}
		0= \sum_{i}(-1)^{i-1}h(\xi_i)= 2(\xi_1-\xi_4)^t A(\xi_1-\xi_2) , \quad A=\begin{bmatrix}
			1 & 0 \\0& -1
		\end{bmatrix}. \label{G-identity}
	\end{equation}
	Hence we get
	\begin{equation} \label{symbol-control}
		\chi_{\mathbb{Z}^2 \setminus \ell_{\pm}^{0}}(\xi_1-\xi_2) \lesssim   \chi_{\mathbb{Z}^2 \setminus \ell_{\pm}^{0}}(\xi_1-\xi_4),\quad (\xi_1,\xi_2,\xi_3,\xi_4)\in \Gamma.
	\end{equation}
	Applying \eqref{symbol-control} to identity \eqref{symbol-uu}, we get
	\begin{align}
		       & \| \widetilde{\mathscr{J}}(u_1, u_2) \|_{L^2_{t,x}([0,1]\times\mathbb{T}^2)}^2
		\lesssim {}  \sum_\Gamma \chi(\xi_1-\xi_4) |\hat\phi_1(\xi_1)\overline{\hat\phi_2(\xi_2)} \hat\phi_2(\xi_3)\overline{\hat\phi_1(\xi_4)}|                                                                                    \\
		={}    &
		\sum_{(a,j)\in\mathbb{Z}^2\times\mathbb Z } |\chi(a)|^2 \Bigg(\sum_{\substack{ \xi_1-\xi_4=a                                                                                                                                \\ h(\xi_1)-h(\xi_4)=j }}  |\hat{\phi}_1(\xi_1)\overline{\hat{\phi}_1(\xi_4)}|\Bigg)\Bigg(\sum_{\substack{ \xi_2-\xi_3=a \\ h(\xi_2)-h(\xi_3)=j }}  |\overline{\hat{\phi}_2(\xi_2)}\hat{\phi}_2(\xi_3)|\Bigg) \\
		\leq{} & \begin{multlined}[t][.95\textwidth]
			         \Bigg( \sum_{(a,j)\in\mathbb{Z}^2\times\mathbb Z } |\chi(a)|^2 \Bigg|\sum_{\substack{ \xi_1-\xi_4=a \\ h(\xi_1)-h(\xi_4)=j }}  |\hat{\phi}_1(\xi_1)\overline{\hat{\phi}_1(\xi_4)}|\Bigg|^2 \Bigg)^{1/2} \times \\ {
			         \Bigg(\sum_{(a,j)\in\mathbb{Z}^2\times\mathbb Z } |\chi(a)|^2 \Bigg|\sum_{\substack{ \xi_2-\xi_3=a \\ h(\xi_2)-h(\xi_3)=j }}  |\overline{\hat{\phi}_2(\xi_2)}\hat{\phi}_2(\xi_3)|\Bigg|^2\Bigg)^{1/2} }
		         \end{multlined} \\
		={}    & \| \widetilde{\mathscr{J}}_1(u_1, u_1) \|_{L^{2}_{t,x}([0,1]\times\mathbb{T}^2)} \| \widetilde{\mathscr{J}}_1(u_2, u_2) \|_{L^{2}_{t,x}([0,1]\times\mathbb{T}^2)} \lesssim N^{2\varepsilon}.
	\end{align}
	Here we used \eqref{identity:Fourier-ubarv} and Case 1 to conclude the estimate.

	\noindent \textbf{Case 3:}   $\mathscr{J}(u_1,u_2)=\mathscr{J}_{\mathrm{DS}}(u_1, u_2) $ or $\sqrt{\mathscr{J}_{\mathrm{DS}}}(u_1, u_2) $.

	We notice
	\begin{equation}
		|m_{\mathrm{DS}}(\xi_1-\xi_2)| \lesssim \chi_{\mathbb{Z}^2 \setminus \ell_{\pm}^{0}}(\xi_1-\xi_2), \quad \sqrt{|m_{\mathrm{DS}}(\xi_1-\xi_2)|} \lesssim \chi_{\mathbb{Z}^2 \setminus \ell_{\pm}^{0}}(\xi_1-\xi_2).
	\end{equation}
	The conclusion follows from Case 2.

	\noindent \textbf{Case 4:}   $\mathscr{J}(u_1,u_2)=\mathscr{J}_2(u_1,u_2)$.

	We start with the Fourier transform
	\begin{equation} \label{identity:Fourier-uv}
		\mathcal{F}(\widetilde{\mathscr{J}}(u_1, u_2) )(\xi,\tau) = \sum_{\substack{\xi_1+\xi_3=\xi \\ h(\xi_1)+h(\xi_3)=\tau}}\chi(\xi_1-\xi_3) \hat\phi_1(\xi_1)  \hat\phi_2(\xi_3) .
	\end{equation}
	Now we get
	\begin{align}
		\| \widetilde{\mathscr{J}}(u_1, u_2) \|_{L^2_{t,x}([0,1]\times\mathbb{T}^2)}^2
		=  \sum_{\Gamma} \chi(\xi_1-\xi_3)\chi(\xi_2-\xi_4)\hat\phi_1(\xi_1) \hat\phi_2(\xi_3) \overline{\hat\phi_1(\xi_4)\hat\phi_2(\xi_2)} \label{symbol-uv}
	\end{align}
	From \eqref{G-identity}, we have
	\begin{equation}
		\chi(\xi_1-\xi_3)\chi(\xi_2-\xi_4)\lesssim \chi(\xi_1-\xi_4)\chi(\xi_2-\xi_3), \quad (\xi_1,\xi_2,\xi_3,\xi_4)\in \Gamma.
	\end{equation}
	Thus we get
	\begin{align}
		\| \widetilde{\mathscr{J}}(u_1, u_2) \|_{L^2_{t,x}([0,1]\times\mathbb{T}^2)}^2
		\lesssim  \sum_{\Gamma} \chi(\xi_1-\xi_4)\chi(\xi_2-\xi_3)|\hat\phi_1(\xi_1) \hat\phi_2(\xi_3) \overline{\hat\phi_1(\xi_4)\hat\phi_2(\xi_2)}|
	\end{align}
	The rest follows identically as in the proof of Case 2, and we obtain the bound
	\begin{align}
		\| \widetilde{\mathscr{J}}(u_1, u_2) \|_{L^2_{t,x}([0,1]\times\mathbb{T}^2)}^2 & \lesssim    \| \widetilde{\mathscr{J}}_1(u_1, u_1) \|_{L^2_{t,x}([0,1]\times\mathbb{T}^2)}  \| \widetilde{\mathscr{J}}_1(u_2, u_2) \|_{L^2_{t,x}([0,1]\times\mathbb{T}^2)} \\&\lesssim N^{2\varepsilon}.
	\end{align}
\end{proof}

Interpolation between \eqref{estv2:level set estimate for large lambda} and \eqref{estv2:L2 bilinear estimate for projection on off-diagnoal} leads to an improved $L^p$ estimate for $\widetilde{\mathscr{J}}(u_1, u_2)$, which is better than \eqref{est:main sharp Strichartz estimate on d-dimensional tori}.  This procedure represents a bilinear analog of the $\varepsilon$
removal argument in ~\cite{Bourgain1993,Killip2016}.
\begin{lem}\label{lem:refinedLp}
	Let $u_1,u_2$ be as in the Convention,  $\widetilde{\mathscr{J}}(u_1, u_2)=u_1u_2$ or $u_1\bar{u}_2$,  then for any $p>2$ we have
	\begin{equation}
		\label{estv2:epsilon improved bilinear estimate for projection on off-diagnoal}
		\|\widetilde{\mathscr{J}}(u_1, u_2)\|_{L^{p}_{t,x}([0,1]\times\mathbb{T}^2)} \lesssim N^{2-4/p}.\end{equation}
\end{lem}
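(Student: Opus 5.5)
We prove the bound through the distribution function of $F:=\widetilde{\mathscr{J}}(u_1,u_2)$, combining the tail estimate \eqref{estv2:level set estimate for large lambda} at high levels with the bilinear $L^2$ bound \eqref{estv2:L2 bilinear estimate for projection on off-diagnoal} at low levels. This is the bilinear analogue of the $\varepsilon$-removal argument of \cite{Bourgain1993,Killip2016}.

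Write
\[
\|F\|_{L^p}^p = p\int_0^\infty \lambda^{p-1}\,|\{|F|>\lambda\}|\dif\lambda,
\]
and split the integral at a threshold $\lambda_0 = N^{2-\delta}$, where $\delta>0$ is small and depends on $p$.

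\textbf{Region $\lambda\geq \lambda_0$.} Here we use the level set bound.

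\begin{itemize}
\item For $\lambda\gg N$, apply \eqref{estv2:level set estimate for large lambda} with an exponent $q>p$. This gives
\[
\int_{\lambda_0}^\infty \lambda^{p-1}\lambda^{-q}N^{2(q-2)}\dif\lambda \lesssim \lambda_0^{p-q}N^{2q-4} = N^{2p-4}\,N^{\delta(q-p)}.
\]
\item This is slightly too big, so refine it. The tail bound was derived from single-function level sets via Lemma~\ref{lemv2:level set estimate for linear wave}. There $|u_j|\le \|\hat\phi_j\|_{\ell^1}\lesssim N$, so $|F|\lesssim N^2$ pointwise (the subtracted diagonal pieces are also bounded by $N^2$ via \eqref{estv2:Bernstein estimate for diagonal part} together with Cauchy--Schwarz in $s$). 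Hence the integral stops at $\lambda\lesssim N^2$.
\item On the range $\lambda\in[N^{2-\delta},CN^2]$ we have a logarithmic-length range of dyadic $\lambda$. Taking $q$ close to $p$, each dyadic shell contributes about $N^{2p-4}(N^2/\lambda)^{q-p}$.
\item To kill the $N^{\delta(q-p)}$ factor, pick $q=p+\epsilon_1$ and choose $\delta\epsilon_1$ tiny. But the constant in \eqref{estv2:level set estimate for large lambda} is uniform only for fixed $q$. So instead use two exponents: for $\lambda\ge N^2/K$ with $K$ a large constant, use $q$ large, which gives $N^{2p-4}K^{q-p}$; for the intermediate range, use $q_1>p$.
\end{itemize}

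\textbf{Region $\lambda<\lambda_0$.} Here we use Chebyshev together with the $L^2$ bound. This gives
\[
\int_0^{\lambda_0}\lambda^{p-3}N^{2\varepsilon}\dif\lambda \lesssim N^{(2-\delta)(p-2)+2\varepsilon}.
\]
Since $p>2$, we can choose $\varepsilon<\delta(p-2)/2$, and this is then $\le N^{2p-4}$.

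The genuine difficulty lies in the intermediate band $N\ll\lambda\le N^{2-\delta}$. There the tail bound with $q>p$ gives $N^{2p-4}(N^{2}/\lambda)^{q-p}$, which loses a factor $N^{\delta'(q-p)}$. Meanwhile the $L^2$ bound there gives $\lambda^{p-2}N^{2\varepsilon}$, which is acceptable only once $\lambda\le N^{2-c\varepsilon}$. The two bounds overlap: choose $\delta=4\varepsilon/(p-2)$ for the $L^2$ part, so everything below $N^{2-\delta}$ is handled. On $[N^{2-\delta},N^2]$ take $q=p+1$ fixed, which gives a loss of $N^{\delta}$.

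The main obstacle is removing this residual $N^{\delta}$. I would first try the duality/atomic argument as in Lemma~\ref{lem:interpolated time convolution estimate}. The idea is to redo the proof of \eqref{estv2:level set estimate for large lambda} via \eqref{omega-est3} directly for $F$, exploiting that the second term there carries a strictly better power when $\lambda\ge N^{2-\delta}$. This yields $|\{|F|>\lambda\}|\lesssim\lambda^{-q}N^{2q-4}$ for $q$ arbitrarily close to $2$, valid at those heights. Using such a $q<p$ makes the high-level integral converge to $\lambda_0^{p-q}N^{2q-4}\le N^{2p-4}$ at the top end $\lambda\sim N^2$, with no loss, which closes the estimate.
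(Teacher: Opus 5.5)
Your closing argument is the same as the paper's. You apply Chebyshev with the $L^2$ bound \eqref{estv2:L2 bilinear estimate for projection on off-diagnoal} below a threshold and the tail bound \eqref{estv2:level set estimate for large lambda} with an exponent $2<q<p$ above it. The pointwise cap $|\widetilde{\mathscr{J}}(u_1,u_2)|\lesssim N^2$, which you justify correctly, truncates the integral. The paper splits at $N^{1+\sigma}$ rather than $N^{2-\delta}$, which makes no difference.

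The ``main obstacle'' you spend most of the proposal on, however, comes only from your choice $q>p$. The corollary \eqref{estv2:level set estimate for large lambda} is already stated and proved for every $q>2$: after reducing to $|u_1||u_2|>\lambda/2$, it follows from Lemma~\ref{lemv2:level set estimate for linear wave} at exponent $2q>4$. So taking $2<q<p$ from the start immediately gives
\[
\int_{\lambda_0}^{CN^2}\lambda^{p-1-q}N^{2q-4}\dif\lambda\lesssim (N^2)^{p-q}N^{2q-4}=N^{2p-4} .
\]
Note that the top end $(N^2)^{p-q}$ dominates this integral, not $\lambda_0^{p-q}$ as you wrote.

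Drop the two-exponent patch and the proposed re-derivation ``via \eqref{omega-est3} directly for $F$''. That re-derivation is not needed, and as described it is not a workable argument: \eqref{omega-est3} is a $TT^*$ bound for a single linear wave and has no direct analogue for the bilinear $F$.
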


\begin{proof}
	It suffices to show
	\begin{equation}
		\int \lambda^{p-1} |\{ (t,x)\in[0,1]\times\mathbb{T}^2 : |\widetilde{\mathscr{J}}(u_1, u_2)|>\lambda \}| \dif \lambda
		\lesssim N^{2(p-2)}.
	\end{equation}
	For the integral on $N^{1+\sigma} < \lambda \lesssim N^2$ we choose some $2<q<p$ and use \eqref{estv2:level set estimate for large lambda} to estimate that
	\begin{equation}
		\int_{N^{1+\sigma}}^{N^2} \lambda^{p-1} \lambda^{-q} N^{2(q-2)}\dif \lambda \lesssim N^{2(p-2)}.\end{equation}
	For the integral on $\lambda < N^{1+\sigma}$ we use \eqref{estv2:L2 bilinear estimate for projection on off-diagnoal} to estimate that
	\begin{equation}
		\int_0^{N^{1+\sigma}} \lambda^{p-1} \lambda^{-2} N^{2\varepsilon}\dif \lambda \lesssim N^{(1+\sigma)(p-2)}N^{2\varepsilon} .\end{equation}
	We reach the conclusion by taking $\varepsilon$ and $\sigma$ small enough.
\end{proof}

\section{Local Well-posedness}\label{Sec:LWP}

\subsection{Function spaces}\label{subsec:function-space}
We use the adapted function spaces $X^s,Y^s$, whose definitions are based on the $U^p,V^p$ spaces.  They originated from the work of Koch-Tataru~\cite{KT}, and have now become a fairly standard framework for  the study of Cauchy problems at critical regularity.
We will quickly summarize their definitions and basic properties. We refer the readers to \cite{hadac,herr2011} for detailed proofs.

Let $\mathcal H$ be a separable Hilbert space over $\mathbb C$; in this paper, this will be $\mathbb C$ or $H^s(\mathbb T^d)$.
Let $\mathcal Z$ be the set of finite partitions $-\infty <t_0 <t_1 < \cdots < t_K \leq \infty$ of the real line.
\begin{defi}
	Let $1 \leq p < \infty$. For $\{ t_k \}_{k=0}^K \in\mathcal Z$ and $ \{ \phi_k \}_{k=0}^{K-1} \subset \mathcal H$ with $\sum_{k=1}^{K-1} \| \phi_k \|_{\mathcal H}^p =1$, we call a piecewise defined function $a:\mathbb R \to \mathcal H$,
	\[ a(t) = \sum_{k=1}^{K-1} \chi_{[t_k,t_{k+1})} \phi_k \]
	a $U^p$-atom, and we define the atomic space $U^p(\mathbb R, \mathcal H)$ of all functions $u \colon \mathbb R\to\mathcal H$ such that
	\[ u = \sum_j \lambda_j a_j ,\quad \mbox{ with } a_j \mbox{ are } U^p\mbox{-atoms, and  } \{\lambda_j\} \in \ell^1,\]
	with norm
	\[
		\| u\|_{U^p(\mathbb R,\mathcal H)} := \inf \left\{ \sum_j |\lambda_j|  \Biggm| u = \sum_j \lambda_j a_j, \  a_j \text{ are } U^p\text{-atoms}\right\}.
	\]
\end{defi}

\begin{defi}
	Let $1\leq p<\infty$, we define the space $V^p(\mathbb R,\mathcal H)$ of functions $v \colon \mathbb R \to \mathcal H$ such that $\lim_{t\to-\infty} v(t)=0$ and the norm
	\[
		\| v \|_{V^p(\mathbb R, \mathcal H)} := \sup_{\{t_k\}_{k=0}^K \in\mathcal Z} \left( \sum_{k=0}^{K-1} \| v(t_{k+1}) - v(t_k) \|_{\mathcal{H}}^p \right)^{1/p}
	\]
	is finite.
\end{defi}

Corresponding to the linear  flow generated by the group $\{\me^{\mi t\Box}\}_{t\in\mathbb R}$, we define the following.

\begin{defi}
	For $s\in \mathbb R$, we define the space $U^p_\Box H^s$ $(\mbox{resp., } V^p_\Box H^s)$ of functions $u \colon \mathbb R \to H^s(\mathbb T^d)$ such that $ t \mapsto \me^{-\mi t\Box} u(t)$ is in $U^p(\mathbb R, H^s(\mathbb T^d))$ $(\mbox{resp., } V^p(\mathbb R, H^s(\mathbb T^d)))$ with the norms
	\[ \| u \|_{U^p_\Box H^s} := \|  \me^{-\mi t\Box} u  \|_{U^p(\mathbb R, H^s(\mathbb T^d))} ,\quad  \| u\|_{V^p_\Box H^s} := \| \me^{-\mi t\Box} u \|_{V^p(\mathbb R, H^s(\mathbb T^d))}  . \]
\end{defi}
Due to the atomic structure of $U^p$, we can extend bounded operators on $L^2(\mathbb T^d)$ to $U^p_\Box L^2$.
\begin{prop} [{\cite[Proposition 2.19]{hadac}}] \label{extension}
	Let $1\leq p< \infty$ and $T_0 \colon L^2(\mathbb T^d) \times \cdots \times L^2(\mathbb T^d) \to L_{\mathrm{loc}}^1 ( \mathbb R \times \mathbb T^d)$ be a $n$-linear operator. If
	\[ \| T_0( \me^{\mi t\Box} \phi_1, \cdots , \me^{\mi t\Box} \phi_n) \|_{L^p_{t,x}} \leq C_{T_0} \prod_{i=1}^n \| \phi_i \|_{{L^2_x(\mathbb T^d)}}, \]
	then $T_0$ extends to a $n$-linear operator $T$ on $U^p_\Box L^2 \times \cdots \times U_\Box^pL^2$, satisfying
	\[ \| T ( u_1, \cdots , u_n) \|_{L^p_{t,x}} \lesssim C_{T_0} \prod_{i=1}^n \| u_i \|_{U^p_\Box L^2}. \]
\end{prop}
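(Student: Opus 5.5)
The plan follows the standard atomic argument of Hadac--Herr--Koch. By definition of $U^p_\Box L^2$ and the triangle inequality in $L^p_{t,x}$, it suffices to prove the bound when each $u_i$ is a single $U^p_\Box$-atom. Indeed, if $u_i=\sum_{j}\lambda_{i,j}a_{i,j}$, then multilinearity gives
\[
T(u_1,\dots,u_n)=\sum_{j_1,\dots,j_n}\lambda_{1,j_1}\cdots\lambda_{n,j_n}T(a_{1,j_1},\dots,a_{n,j_n}).
\]
A uniform bound $\|T(a_1,\dots,a_n)\|_{L^p_{t,x}}\lesssim C_{T_0}$ over atoms then yields $\|T(u)\|_{L^p}\lesssim C_{T_0}\prod_i\sum_j|\lambda_{i,j}|$. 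Taking the infimum over decompositions gives the claim, and it also shows that the extension $T$ is well defined and independent of the decomposition, provided we define $T$ on atoms consistently (pointwise in $t$, as below) and check that the series converges in $L^p$.

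For atoms, write $a_i(t)=\sum_{k}\chi_{[t^i_k,t^i_{k+1})}(t)\me^{\mi t\Box}\phi_{i,k}$ with $\sum_k\|\phi_{i,k}\|_{L^2}^p=1$. Take the common refinement $\{s_m\}$ of the $n$ partitions. On each interval $I_m=[s_m,s_{m+1})$, every $a_i$ equals $\me^{\mi t\Box}\phi_{i,k_i(m)}$ for a single index $k_i(m)$. We therefore define
\[
T(a_1,\dots,a_n):=\sum_m\chi_{I_m}\,T_0\big(\me^{\mi t\Box}\phi_{1,k_1(m)},\dots,\me^{\mi t\Box}\phi_{n,k_n(m)}\big).
\]
This requires that $T_0$ commutes with time cutoffs, i.e. that it acts pointwise in $t$; this holds for all the product-type and Fourier-multiplier-in-$x$ operators used in the paper, and we assume it. The intervals are disjoint, so
\[
\|T(a)\|_{L^p}^p=\sum_m\|\chi_{I_m}T_0(\cdots)\|_{L^p}^p\le C_{T_0}^p\sum_m\prod_i\|\phi_{i,k_i(m)}\|_{L^2}^p.
\]

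The main obstacle is bounding $\sum_m\prod_i\|\phi_{i,k_i(m)}\|^p$ by $1$. One cannot simply sum over all tuples $(k_1,\dots,k_n)$, since that sum could be as large as $1$ per factor only after a counting argument. The key observation is that each refined interval $m$ corresponds to a distinct tuple $(k_1(m),\dots,k_n(m))$, and the map $m\mapsto(k_i(m))_i$ is monotone in each coordinate. Hence
\[
\sum_m\prod_i x_{i,k_i(m)}\le\sum_m\max_i x_{i,k_i(m)},\qquad x_{i,k}=\|\phi_{i,k}\|^p\le 1,
\]
because a product of numbers in $[0,1]$ is at most their maximum. This can still exceed $1$, so instead we use the geometric mean $\prod_i x_{i,k_i(m)}\le\frac1n\sum_i x_{i,k_i(m)}^{n}$ (AM--GM with weights $1/n$ on the $n$-th powers). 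For fixed $i$, the index $k_i(m)$ stays constant over several consecutive $m$, so $x_{i,k}$ appears repeatedly, and a crude count loses a factor of the number of intervals. The clean way, which is how Hadac--Herr--Koch proceed, is to bound the sum by iterated Hölder in $\ell^p$ indexed by the monotone staircase path $m\mapsto(k_i(m))$: the path visits each value of $k_i$ along a consecutive block, and the product is dominated by a sum over such monotone paths with total at most $\prod_i\big(\sum_k x_{i,k}\big)=1$ up to a constant depending on $n$. I expect this combinatorial step, yielding a constant depending only on $n$ and $p$, to be the delicate part. If it resists, I would instead fall back on the cited \cite[Proposition 2.19]{hadac}, where the $U^p$ atoms are handled exactly this way.
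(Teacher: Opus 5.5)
Your skeleton matches the standard Hadac--Herr--Koch proof that the paper cites without reproducing: reduce to atoms by multilinearity, pass to the common refinement $\{I_m\}$, and use disjointness in time. This gives $\|T(a)\|_{L^p}^p=\sum_m\|\chi_{I_m}T_0(\cdots)\|_{L^p}^p\le C_{T_0}^p\sum_m\prod_i x_{i,k_i(m)}$ with $x_{i,k}=\|\phi_{i,k}\|_{L^2}^p$. However, you leave the final inequality $\sum_m\prod_i x_{i,k_i(m)}\le 1$ unproved. You call it the delicate step and say you would fall back on the citation if it resists. Your detours do not close it: the maximum bound fails, as you note; AM--GM on $n$-th powers loses a factor, as you also note; and the iterated H\"older along a staircase with an $n$-dependent constant is never carried out. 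As written, the proof has a hole exactly at its last step.

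The hole is closed by an observation you already made. The map $m\mapsto(k_1(m),\dots,k_n(m))$ is injective, because each nonempty intersection $I^1_{k_1}\cap\cdots\cap I^n_{k_n}$ is a single refined interval. Since all $x_{i,k}\ge0$, the sum over realized tuples is bounded by the sum over \emph{all} tuples, and that sum factorizes:
\[
\sum_m\prod_{i=1}^n x_{i,k_i(m)}\le\sum_{k_1,\dots,k_n}\prod_{i=1}^n x_{i,k_i}=\prod_{i=1}^n\Big(\sum_k x_{i,k}\Big)=1 .
\]
So your remark that one ``cannot simply sum over all tuples'' is mistaken; that is precisely the argument. It gives the atomic bound with constant exactly $C_{T_0}$, and needs no monotonicity, no H\"older, and no $n$-dependent constant. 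With this line inserted, the rest of your proposal goes through. That includes the pointwise-in-time reading of $T_0$, which you rightly flag and which is implicit in the statement (explicit in Hadac--Herr--Koch).
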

We also have the interpolation property.
\begin{prop}[{\cite{hadac, herr2011}}]\label{prop:interpolation} Suppose $q_1, q_2>2$ and $E$ is a Banach space.
	$T: U^{q_1}\times U^{q_2}\rightarrow E$ is a bounded bilinear estimate, with the bound
	\[\|T(u_1,u_2)\|_E\leq C \|u_1\|_{U^{q_1}}\|u_2\|_{U^{q_2}}\]
	Assume there is constant $C_1\in (0,C]$ and we  have estimate
	\[\|T(u_1,u_2)\|_E\leq C_1 \|u_1\|_{V_{2}}\|u_2\|_{V^{2}}\]
	then $T$ satisfies
	\begin{equation}
		\|T(u_1,u_2)\|_E\lesssim C_1 (1+\ln (C/C_1))^2\|u_1\|_{U_{2}}\|u_2\|_{U^{2}}
	\end{equation}

\end{prop}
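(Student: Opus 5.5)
The plan is to prove the stronger statement in which $\|u_i\|_{U^2}$ on the right is replaced by $\|u_i\|_{V^2}$, for right-continuous $u_i$. The stated inequality then follows from the embedding $U^2\hookrightarrow V^2$, i.e. $\|u\|_{V^2}\lesssim\|u\|_{U^2}$. Strictly speaking, the version with $U^2$ norms already follows directly from the second hypothesis and this same embedding; the logarithmic loss only becomes necessary for $V^2$ inputs. The argument is the standard one from \cite{hadac, herr2011}.

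\textbf{Key tool.} I would invoke the $V^2$ decomposition lemma \cite[Lemma 6.4]{hadac}. Let $q>2$ and let $v\in V^2$ be right-continuous. Then for every $M\geq 1$ there is a splitting $v=a+b$ with
\[
\|a\|_{U^2}\lesssim M\|v\|_{V^2},\qquad \|b\|_{U^q}\lesssim \me^{-M}\|v\|_{V^2}.
\]
This holds with $q$ replaced by either $q_1$ or $q_2$. It is obtained by choosing partitions adapted to the levels $2^{-k}\|v\|_{V^2}$ of the $V^2$-variation. One then takes step-function approximations $v_k$ and sets $a=\sum_{k\le M'}(v_k-v_{k-1})$ and $b=v-a$. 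Each increment has $U^2$ norm $O(1)\|v\|_{V^2}$, which produces the factor $M$. The tail decays geometrically in $U^q$ because $q>2$: the number of jumps at level $2^{-k}$ is at most about $2^{2k}$, while their $\ell^q$-sum gains $2^{-k(1-2/q)}$. I would take this lemma as known, since the paper already cites these references for the function-space facts.

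\textbf{Main step.} Normalize $\|u_1\|_{V^2}=\|u_2\|_{V^2}=1$ and decompose $u_i=a_i+b_i$ with a common parameter $M$. By bilinearity, $T(u_1,u_2)$ splits into four terms.
\begin{itemize}
\item For $T(a_1,a_2)$, use the $V^2\times V^2$ bound together with $\|a_i\|_{V^2}\lesssim\|a_i\|_{U^2}\lesssim M$. This gives $C_1M^2$.
\item For the mixed term $T(a_1,b_2)$, use the $U^{q_1}\times U^{q_2}$ bound. Here $\|a_1\|_{U^{q_1}}\lesssim\|a_1\|_{U^2}\lesssim M$ by the embedding $U^2\subset U^{q_1}$, and $\|b_2\|_{U^{q_2}}\lesssim \me^{-M}$. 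This gives $CM\me^{-M}$.
\item The other mixed term $T(b_1,a_2)$ is handled the same way and also gives $CM\me^{-M}$.
\item For $T(b_1,b_2)$, the same bound gives $C\me^{-2M}$.
\end{itemize}

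\textbf{Choice of $M$ and the main difficulty.} Choose $M=1+\ln(C/C_1)$, which is at least $1$ since $C_1\le C$. Then $C\me^{-M}\le C_1$, so every term is bounded by $C_1M^2=C_1(1+\ln(C/C_1))^2$. Summing the four terms gives the claim.

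The only genuinely nontrivial input is the decomposition lemma, specifically the exponential decay of the $U^q$ tail. If I had to reprove it, I would do so by the dyadic-level construction sketched above. The right-continuity and the convention $\lim_{t\to-\infty}v=0$ are needed there to ensure that the approximating step functions are genuine $U^2$ atoms.
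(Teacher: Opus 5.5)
Your argument is correct and is exactly the Hadac--Herr--Koch proof that the paper cites: split each $V^2_{rc}$ input into a $U^2$ piece of size $\lesssim M$ and a $U^{q_j}$ piece of size $\lesssim \me^{-M}$, expand bilinearly, and take $M=1+\ln(C/C_1)$. One caveat: with the hypothesis as literally stated (a $V^2\times V^2$ bound), your ``stronger'' $V^2$ conclusion is also immediate, so the decomposition only has content for the intended version in \cite{hadac}, where the hypothesis is a $U^2\times U^2$ bound and the conclusion is for $V^2_{rc}$ inputs. Your first bullet already works under that hypothesis, since it only uses $\|a_i\|_{U^2}\lesssim M$.
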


Now we define the function spaces we will use for Cauchy problem.
\begin{defi}
	For $s\in\mathbb R$, we define the space $X^s$ of functions $u\colon \mathbb R \to H^s(\mathbb T^d)$ such that for every $\xi \in \mathbb Z^d$ the mapping $ t \mapsto \me^{ -\mi t h(\xi)} \widehat{u(t)}(\xi)$ is in $U^2(\mathbb R, \mathbb C ) $, with the norm
	\[ \|u\|_{X^s} := \left( \sum_{\xi \in \mathbb Z^d} \left<\xi\right>^{2s} \| \me^{ -\mi t h(\xi)} \widehat{u(t)}(\xi) \|_{U^2(\mathbb R, \mathbb C)}^2 \right)^{1/2} .\]
\end{defi}
\begin{defi}
	For $s\in\mathbb R$, we define the space $Y^s$ of functions $u\colon \mathbb R \to H^s(\mathbb T^d)$ such that for every $\xi\in \mathbb Z^d$ the mapping $ t \mapsto \me^{ -\mi t h(\xi)} \widehat{u(t)}(\xi)$ is in $V^2(\mathbb R,  \mathbb C ) $, with the norm
	\[ \|u\|_{Y^s} := \left( \sum_{\xi\in \mathbb Z^d} \left<\xi\right>^{2s} \| \me^{ -\mi t h(\xi)} \widehat{u(t)}(\xi) \|_{V^2(\mathbb R, \mathbb C)}^2 \right)^{1/2} .\]
\end{defi}

\begin{remark}\label{rem:embedding}
	We have the embeddings\begin{equation}
		U^2_\Box H^s \hookrightarrow X^s \hookrightarrow Y^s \hookrightarrow V^2_\Box H^s \hookrightarrow U^q_\Box H^s\hookrightarrow L^\infty H^s,\quad \forall q\in(2,\infty). \label{embedding}\end{equation}
\end{remark}
\begin{remark}\label{remark}
	For $s\in \mathbb R$, and $S_1,S_2$ are disjoint subsets of $\mathbb Z^d$, we have
	\[ \| P_{S_1\cup S_2} u\|_{Y^s} ^2 = \| P_{S_1} u\|_{Y^s}^2 + \|P_{S_2} u\|_{Y^s}^2. \]
\end{remark}
For time interval $I\subset \mathbb R$, we also consider the restriction spaces $X^s(I),Y^s(I)$ with norms
\[
	\| u \|_{X^s(I)} = \inf \{ \| \tilde{u} \|_{X^s} \mid \tilde{u}|_{I}=u \},\quad
	\| u \|_{Y^s(I)} = \inf \{ \| \tilde{u} \|_{Y^s} \mid \tilde{u}|_{I}=u \}.
\]
\begin{prop}[{\cite[Proposition 2.10]{herr2011}}]
	Let $s\in \mathbb{R}$ and $T>0$. For $\phi \in H^s(\mathbb T^d)$, we have $\me^{\mi t\Box} \phi \in X^s([0,T))$ and
	\[ \| \me^{\mi t\Box} \phi \|_{X^s([0,T))} \leq \| \phi \|_{H^s(\mathbb T^d)} .\]
	For $f\in L^1 ([0,T); H^s(\mathbb T^d))$, we have  the estimate for the Duhamel term.
	\begin{multline*}
		\left\| \int_0^t \me^{\mi (t-t')\Box} f(t') \dif t' \right\|_{X^s([0,T))} \leq
		\sup_{\substack{v\in Y^{-s}([0,T)) \\ \|v\|_{Y^{-s}([0,T))}\leq 1}  } \left| \iint_{[0,T)\times \mathbb T^d} f(t,x) \overline{v(t,x)} \dif x\dif t \right|.
	\end{multline*}
\end{prop}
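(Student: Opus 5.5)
The statement has two parts. The first is the bound $\|\me^{\mi t\Box}\phi\|_{X^s([0,T))}\le\|\phi\|_{H^s}$ for the free evolution. The second is the duality bound for the Duhamel term. I would prove both directly from the definitions of $X^s$ and $Y^s$ and from the $U^2$–$V^2$ duality, working one frequency at a time.

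\textbf{Free evolution.} For each $\xi\in\mathbb Z^d$ we have $\me^{-\mi th(\xi)}\widehat{\me^{\mi t\Box}\phi}(\xi)=\hat\phi(\xi)$, which is constant in $t$. Take the extension $\tilde u(t)=\chi_{[0,\infty)}(t)\me^{\mi t\Box}\phi$, which agrees with $\me^{\mi t\Box}\phi$ on $[0,T)$. Its $\xi$-th profile is $\chi_{[0,\infty)}\hat\phi(\xi)$. This is $|\hat\phi(\xi)|$ times a single $U^2$-atom with one jump, so its $U^2(\mathbb R,\mathbb C)$ norm is at most $|\hat\phi(\xi)|$. Summing over $\xi$ with weights $\langle\xi\rangle^{2s}$ gives $\|\tilde u\|_{X^s}\le\|\phi\|_{H^s}$. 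Taking the infimum over extensions gives the first claim.

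\textbf{Duhamel term, reduction to scalar profiles.} Set $I(t)=\int_0^t\me^{\mi(t-t')\Box}f(t')\dif t'$ and extend it by $0$ for $t<0$ and by $I(T)$-flowed (i.e.\ cut $f$ off at $T$) for $t\ge T$. The $\xi$-th profile is
\[g_\xi(t)=\int_0^t\chi_{[0,T)}(t')\me^{-\mi t'h(\xi)}\hat f(t',\xi)\dif t'.\]
This is absolutely continuous with $g_\xi(-\infty)=0$. I will use the duality $(U^2)^*=V^2$ in the form of Hadac–Herr–Koch. For absolutely continuous $g$,
\[\|g\|_{U^2}=\sup_{\|w\|_{V^2}\le1}\Big|\int g'(t)\overline{w(t)}\dif t\Big|,\]
where the supremum may be taken over right-continuous step functions $w$ in $V^2$. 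Applying this with weights, I will view $X^s$ as an $\ell^2$-sum over $\xi$ with weights $\langle\xi\rangle^{s}$ and $Y^{-s}$ as the corresponding $\ell^2$-sum of $V^2$ norms with weights $\langle\xi\rangle^{-s}$. The $\ell^2$–$\ell^2$ duality of the weighted sum then gives
\[\|I\|_{X^s}\le\sup_{\{w_\xi\}}\sum_\xi\int g_\xi'(t)\overline{w_\xi(t)}\dif t,\]
with the supremum over families satisfying $\sum_\xi\langle\xi\rangle^{-2s}\|w_\xi\|_{V^2}^2\le1$.

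\textbf{Identifying the pairing.} Given such a family, define $v$ by $\hat v(t,\xi)=\me^{\mi th(\xi)}w_\xi(t)$. Then $\|v\|_{Y^{-s}}\le1$. By Parseval,
\[\sum_\xi\int_0^T\me^{-\mi th(\xi)}\hat f(t,\xi)\overline{w_\xi(t)}\dif t=\iint_{[0,T)\times\mathbb T^d}f\bar v\dif x\dif t,\]
using that the phases cancel. Only the values of $v$ on $[0,T)$ enter the pairing, so the supremum is really over $v\in Y^{-s}([0,T))$ of norm at most $1$. Restricting $I$ to $[0,T)$ then yields the stated bound.

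\textbf{Main obstacle.} The delicate point is the duality step. Hadac–Herr–Koch prove $U^2$–$V^2$ duality through a bilinear form $B(g,w)$ defined via partitions. One has to check that for absolutely continuous $g$ this form reduces to $\int g'\bar w$, and that its normalization (left limits, the condition $w(-\infty)=0$ versus right-continuity) is compatible with the $Y$ spaces as defined here. I would cite their Theorem 2.8 and Proposition 2.10 for this step rather than reprove it, and only verify the frequency-wise summation and the Parseval identity.
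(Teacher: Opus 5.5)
The paper does not prove this statement itself. It quotes \cite[Proposition 2.10]{herr2011}, whose proof is exactly your frequency-by-frequency $U^2$--$V^2$ duality. Your treatment of the free evolution, the extension of the Duhamel term, the weighted $\ell^2$ duality and the Parseval identity is fine.

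The step you deferred is a real gap, though, and citing \cite{hadac} does not close it. Their isometric duality $(U^2)^*=V^2$ pairs $U^2$ with functions $w$ whose limit at $-\infty$ may be nonzero, normed with the convention $w(\infty)=0$. The paper's $V^2$, and hence $Y^{-s}$, requires $\lim_{t\to-\infty}w(t)=0$. Norming elements genuinely need the nonzero limit, as the example below shows. So your identity for $\|g\|_{U^2}$ is false if $V^2$ means the paper's space. If it means the space of \cite{hadac}, then $\|v\|_{Y^{-s}}\le 1$ does not follow.

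The repair is to replace each $w_\xi$ by $\chi_{[0,\infty)}w_\xi$.
\begin{itemize}
\item The pairing is unchanged, because $g_\xi'$ vanishes on $(-\infty,0)$.
\item The squared $V^2$ norm at most doubles. The only new term in a partition sum is one jump of size $|w_\xi(t)|\le\|w_\xi\|_{V^2}$, which you see by pairing $t$ with $\infty$, where $w_\xi(\infty)=0$.
\end{itemize}
This gives the Duhamel bound with constant $\sqrt2$.

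The factor cannot be removed, so with the paper's definition of $V^2$ no argument gives the constant $1$. Take $f=\beta(t)\,\me^{\mi t\Box}\me^{2\pi\mi\xi_0\cdot x}$ with $\beta=\epsilon^{-1}(2\chi_{[0,\epsilon)}-\chi_{[T/2,T/2+\epsilon)})$ and $\epsilon<T/4$.
\begin{itemize}
\item Left side: the $\xi_0$-profile of the Duhamel term equals $2$ on $[\epsilon,T/2]$. Since $\sup_t|u(t)|\le\|u\|_{U^2}$ for every extension, the left-hand side is at least $2\langle\xi_0\rangle^s$.
\item Right side: the pairing is an average of $2\overline{w(s)}-\overline{w(t)}$ over $s\in[0,\epsilon)$, $t\in[T/2,T/2+\epsilon)$. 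Here $w$ is the $\xi_0$-profile of an extension of $v$, so $\|w\|_{V^2}\le(1+\delta)\langle\xi_0\rangle^s$.
\item Since $w$ vanishes at $-\infty$, we have $|w(s)|^2+|w(t)-w(s)|^2\le\|w\|_{V^2}^2$. Hence $|2w(s)-w(t)|\le|w(s)|+|w(s)-w(t)|\le\sqrt2\,\|w\|_{V^2}$, and the supremum is at most $\sqrt2\langle\xi_0\rangle^s$.
\end{itemize}
If $v(\infty)$ is read as $0$, a single jump $\beta=\epsilon^{-1}\chi_{[0,\epsilon)}$ already gives the ratio $\sqrt2$.

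So your argument, with the truncation step added, proves the proposition with an absolute constant. That is all the contraction argument in the paper uses. The constant $1$ as stated does not hold for the paper's $Y^{-s}$.
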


\begin{remark}
	The $X^s([0,T))$ norm of the Duhamel term is also controlled by $ \| f \|_{L^1([0,T);H^s(\mathbb T^d))}$.
\end{remark}

\subsection{Bilinear and trilinear estimate}
Now we
transfer the estimates of free waves in previous sections to functions in $Y^0$ spaces.
\begin{lem}\label{linear-Y} For any square $C$ of sidelength $N$ in $\mathbb{Z}^2$, we have
	\begin{equation}\label{est:linear-Y}
		\| P_{C} u \|_{{L_{t,x}^{p}([0,1]\times \mathbb T^{2})}} \lesssim \big( N^{1-\frac{4}p} + N^{\frac 12-\frac {1}{p}} \big) \|u \|_{Y^0},\quad \forall p>2.
	\end{equation}
\end{lem}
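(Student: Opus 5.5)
We first prove the estimate for free solutions and then transfer it to $Y^0$ using the $U^p$/$V^2$ machinery.

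\textbf{Step 1: free solutions.} We claim that for every $p>2$
\begin{equation}
\| P_C \me^{\mi t\Box}\phi\|_{L^p_{t,x}([0,1]\times\mathbb T^2)}\lesssim \big(N^{1-\frac4p}+N^{\frac12-\frac1p}\big)\|\phi\|_{L^2_x(\mathbb T^2)}.
\end{equation}
For $C=[-N,N]^2$ this is Theorem~\ref{thm:main sharp Strichartz estimate on d-dimensional tori} with $d=2$, $\theta_j=1$. The proof of that theorem uses a smooth cutoff $\Psi(\cdot/N)$ in the kernel. Replacing the sharp projection $P_C$ by it is harmless: write $P_C\phi=P_{\le 2N}P_C\phi$ and apply the $TT^*$ argument directly to the smoothed operator, whose symbol equals $1$ on $C$.

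For a general square $C$ with center $r_0$, use the Galilean identity from the proof of Lemma~\ref{lemv2:level set estimate for linear wave}. It gives
\[
|\me^{\mi t\Box}P_C\phi(t,x)|=|\me^{\mi t\Box}P_{C_0}\phi_0(t,x+t\tilde r_0)|, \qquad \|\phi_0\|_{L^2}=\|\phi\|_{L^2}.
\]
For each fixed $t$, the $L^p_x$ norm is invariant under the translation $x\mapsto x+t\tilde r_0$. So the centered bound carries over to $C$. A non-integer center $r_0$ can be avoided by enlarging $C$ to a square with integer center and comparable sidelength.

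\textbf{Step 2: passage to $U^p_\Box$.} Apply Proposition~\ref{extension} with $n=1$ to the operator $T_0\phi=\chi_{[0,1]}P_C\me^{\mi t\Box}\phi$. This gives
\[
\|P_Cu\|_{L^p_{t,x}}\lesssim A_N(p)\,\|u\|_{U^p_\Box L^2}, \qquad A_N(p):=N^{1-\frac4p}+N^{\frac12-\frac1p}.
\]

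\textbf{Step 3: passage to $Y^0$.} Remark~\ref{rem:embedding} gives $Y^0\hookrightarrow V^2_\Box L^2\hookrightarrow U^q_\Box L^2$ for every $q>2$. Fix $p>2$ and pick $q\in(2,p)$. We want to use the $U^q$ bound at exponent $p$.

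This is where the obstacle lies. The embedding $V^2\hookrightarrow U^q$ needs $q>2$, so a naive use of Step 2 with exponent $p$ requires a $U^p$ atom bound, and $V^2\hookrightarrow U^p$ does hold because $p>2$. So take $q=p$ directly: $V^2_\Box\hookrightarrow U^p_\Box$ for $p>2$ with constant depending only on $p$. Then
\[
\|P_Cu\|_{L^p}\lesssim A_N(p)\|u\|_{U^p_\Box L^2}\lesssim_p A_N(p)\|u\|_{V^2_\Box L^2}\lesssim A_N(p)\|u\|_{Y^0}.
\]
There is no logarithmic loss, since $p>2$ is strict. This is exactly why the lemma is stated only for $p>2$.

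Only one point needs care: the implicit constant in $V^2\hookrightarrow U^p$ must be uniform in $N$ and $C$, which it is, being a purely abstract embedding. Strictly, $P_C$ commutes with $\me^{\mi t\Box}$, so the same argument with $P_C u$ in place of $u$ also works. No interpolation via Proposition~\ref{prop:interpolation} is needed.
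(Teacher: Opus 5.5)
Your proposal is correct and follows the paper's route: the free-wave bound from Theorem~\ref{thm:main sharp Strichartz estimate on d-dimensional tori} with $d=2$, the Galilean transformation to reduce an arbitrary square to a centered one, Proposition~\ref{extension} to pass to $U^p_\Box L^2$, and the embedding $Y^0\hookrightarrow V^2_\Box L^2\hookrightarrow U^p_\Box L^2$ for $p>2$. Your care about the smooth cutoff versus the sharp projection is a detail the paper glosses over, and your detour through $q\in(2,p)$ in Step~3 is unnecessary. Also, the restriction $p>2$ is not essential to the lemma itself: at $p=2$ the bound follows trivially from $Y^0\hookrightarrow L^\infty_t L^2_x$.
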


\begin{lem}\label{bilinear-Y} Let $\mathscr{J}(u_1,u_2)=u_1u_2$ or $u_1\bar{u}_2$. For any square $C_1, C_2$ of {sidelength $N$ in $\mathbb{Z}^2$}, we have
	\begin{equation}\label{est:bilinear-Y}
		\|\widetilde{\mathscr{J}}( P_{C_1} u_1, P_{C_2}u_2) \|_{{L_{t,x}^{p}([0,1]\times \mathbb T^{2})}} \lesssim   N^{2-\frac {4}{p}}  \|P_{C_1}u_1 \|_{Y^0} \|P_{C_2}u_2\|_{Y^0},\quad \forall p>2.
	\end{equation}
\end{lem}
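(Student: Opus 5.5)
The plan is to transfer the free-wave bound of Lemma~\ref{lem:refinedLp} to $U^p_\Box$ via Proposition~\ref{extension}, and then to $V^2_\Box \supset Y^0$ via Proposition~\ref{prop:interpolation}. The obstruction is the logarithmic loss in that interpolation, and it is absorbed by running Lemma~\ref{lem:refinedLp} at a slightly different exponent.

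\textbf{Free solutions.} Fix $p>2$, squares $C_1,C_2$ of sidelength $N$, and the bilinear operator
\[
T(u_1,u_2)=\widetilde{\mathscr{J}}(P_{C_1}u_1,P_{C_2}u_2).
\]
For free solutions $u_j=\me^{\mi t\Box}\phi_j$, Lemma~\ref{lem:refinedLp} (after normalizing the $L^2$ norms, which the Convention allows) gives
\[
\|T(u_1,u_2)\|_{L^{p}_{t,x}}\lesssim N^{2-4/p}\|\phi_1\|_{L^2}\|\phi_2\|_{L^2}.
\]
The time interval is $[0,1]$, so truncating to it is harmless.

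\textbf{Extension to $U^{p}_\Box$.} Proposition~\ref{extension} extends this to
\[
\|T(u_1,u_2)\|_{L^p}\lesssim N^{2-4/p}\|u_1\|_{U^p_\Box L^2}\|u_2\|_{U^p_\Box L^2}.
\]
This is the large constant $C=N^{2-4/p}$ with $q_1=q_2=p>2$ needed in Proposition~\ref{prop:interpolation}.

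\textbf{A smaller $V^2$ bound.} Proposition~\ref{prop:interpolation} also needs a bound with a smaller constant $C_1$ for $V^2$ inputs. I would get it from a $U^2$ estimate at a nearby exponent. Pick $2<p_1<p$ and apply Lemma~\ref{lem:refinedLp} with $p_1$ in place of $p$. Proposition~\ref{extension} with $U^{p_1}_\Box$, together with the embedding $V^2_\Box\hookrightarrow U^{p_1}_\Box$ from Remark~\ref{rem:embedding}, yields an $L^{p_1}$ bound $N^{2-4/p_1}$ for $V^2_\Box$ inputs.

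To return to $L^p$, use that $T(u_1,u_2)$ has space Fourier support in a square of side $\lesssim N$. Interpolate the $L^{p_1}$ bound with the trivial $L^\infty$ bound: by Bernstein, $\|P_Cu\|_{L^\infty}\lesssim N\|u\|_{L^\infty L^2}$, so $\|T\|_{L^\infty}\lesssim N^2\|u_1\|_{V^2}\|u_2\|_{V^2}$. Since $2-4/p$ is exactly the affine interpolation of $2-4/p_1$ at $L^{p_1}$ and $2$ at $L^\infty$, this gives
\[
\|T\|_{L^p}\lesssim N^{2-4/p}\|u_1\|_{V^2}\|u_2\|_{V^2}.
\]
Then $C_1\approx C$, and Proposition~\ref{prop:interpolation} produces no logarithmic loss.

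There is a subtlety: the diagonal subtraction in $\widetilde{\mathscr{J}}$ commutes with the projections $P_{\ell^s_\pm}$, which are bounded on $U^p$ and $V^2$, so these steps are legitimate. Also, the $V^2$ input bound above already gives the result directly: $\|P_{C_j}u_j\|_{V^2_\Box L^2}\le\|P_{C_j}u_j\|_{Y^0}$ by Remark~\ref{rem:embedding}. So Proposition~\ref{prop:interpolation} is in fact optional.

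\textbf{Main obstacle.} The delicate point is whether the $U^{p_1}$ extension controls $\widetilde{\mathscr{J}}$ rather than $\mathscr{J}$. Proposition~\ref{extension} applies to any bilinear operator defined on free waves, and $\widetilde{\mathscr{J}}$ is bilinear, being a convergent sum of the $\mathscr{J}(P_{\ell^s}\cdot,P_{\ell^s}\cdot)$ on finitely many lines meeting the squares. So I expect this step to go through, with the main work being the bookkeeping of the interpolation exponents.
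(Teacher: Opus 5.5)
Your proposal is correct and is essentially the paper's argument: the free-wave bound of Lemma~\ref{lem:refinedLp}, Proposition~\ref{extension} to pass to $U^p_\Box L^2$, and the embedding $Y^0\hookrightarrow V^2_\Box L^2\hookrightarrow U^p_\Box L^2$ for $p>2$ from Remark~\ref{rem:embedding}. You can drop the detour through an auxiliary $p_1<p$, the $L^\infty$ interpolation and Proposition~\ref{prop:interpolation} by simply taking $p_1=p$; the $L^\infty$ step would in any case also need a bound on the subtracted diagonal sums, which is an easy $O(N)$ by Bernstein on lines, and the paper reserves Proposition~\ref{prop:interpolation} for the $L^2$ estimate of Lemma~\ref{DSbilinear-Y}, where no embedding of $V^2_\Box$ into $U^2_\Box$ is available.
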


\begin{lem}\label{DSbilinear-Y} Let $\mathscr{J}=\mathscr{J}_{\mathrm{DS}}$ or $\sqrt{\mathscr{J}_{\mathrm{DS}}}$. For any square $C_1, C_2$ of sidelength $N$ in $\mathbb{Z}^2$, we have
	\begin{equation}\label{est:DSbilinear-Y}
		\|\mathscr{J}( P_{C_1} u_1, P_{C_2}u_2) \|_{{L_{t,x}^{2}([0,1]\times \mathbb T^{2})}} \lesssim   N^\varepsilon  \|P_{C_1}u_1 \|_{Y^0} \|P_{C_2}u_2\|_{Y^0}.
	\end{equation}
	for any $\varepsilon>0.$
\end{lem}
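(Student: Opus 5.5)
The bound follows from Lemma~\ref{lemv2: Bilinear L2 estimate for projection on diagonal}, Case 3, by the standard transfer from free waves to $U^2$ and then to $V^2$ via Proposition~\ref{prop:interpolation}. The cost is only an extra logarithmic factor, which the $N^\varepsilon$ absorbs.

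\textbf{Free waves.} For free waves $u_i=P_{C_i}\me^{\mi t\Box}\phi_i$ with $C_1,C_2$ squares of sidelength $N$, Case 3 of Lemma~\ref{lemv2: Bilinear L2 estimate for projection on diagonal} gives
\[
\|\mathscr{J}(u_1,u_2)\|_{L^2_{t,x}([0,1]\times\mathbb{T}^2)}\lesssim N^{\varepsilon}\|\phi_1\|_{L^2}\|\phi_2\|_{L^2}.
\]
We use here that $\widetilde{\mathscr{J}}_{\mathrm{DS}}=\mathscr{J}_{\mathrm{DS}}$, and similarly for the square root, so no diagonal part has to be subtracted. The lemma is stated for sidelength $2N$ and normalized data; both are harmless by rescaling and homogeneity.

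\textbf{Transfer to $U^2$.} Since the operator $(\phi_1,\phi_2)\mapsto \mathscr{J}(P_{C_1}\me^{\mi t\Box}\phi_1,P_{C_2}\me^{\mi t\Box}\phi_2)$ is sesquilinear, Proposition~\ref{extension} (applied with conjugation in the second slot, which is harmless) gives
\[
\|\mathscr{J}(P_{C_1}u_1,P_{C_2}u_2)\|_{L^2_{t,x}}\lesssim N^\varepsilon\|P_{C_1}u_1\|_{U^2_\Box L^2}\|P_{C_2}u_2\|_{U^2_\Box L^2}.
\]

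\textbf{Crude $U^q$ bound.} To pass to $V^2$ we need a cruder estimate in $U^q$ with $q>2$, with a constant $C$ that is only polynomial in $N$. The symbol of $\mathscr{J}$ is bounded, so for free waves
\[
\|\mathscr{J}(u_1,u_2)\|_{L^2_{t,x}}\le\|u_1\bar u_2\|_{L^2_{t,x}}\le \|u_1\|_{L^4_{t,x}}\|u_2\|_{L^4_{t,x}}.
\]
By Bernstein, or by Lemma~\ref{linear-Y} at $p=4$, this is $\lesssim N\,\|\phi_1\|_{L^2}\|\phi_2\|_{L^2}$. The trivial bound $\|P_Cu\|_{L^\infty_{t,x}}\lesssim N\|u\|_{L^\infty L^2}$ also works.

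To get a $U^q$ statement, use the atomic structure. For $U^q$ atoms with step functions on intervals $I_k$, the $L^2_{t,x}$ norm on $[0,1]$ splits over the intervals. Bounding by $L^\infty_tL^2_x$ via $\|P_{C}\me^{\mi t\Box}\phi\|_{L^\infty_x}\le N\|\phi\|_{L^2}$ gives
\[
\|\mathscr{J}(P_{C_1}u_1,P_{C_2}u_2)\|_{L^2_{t,x}}\lesssim N\,\|u_1\|_{L^\infty L^2}\|u_2\|_{L^\infty L^2}\lesssim N\|u_1\|_{U^q_\Box}\|u_2\|_{U^q_\Box},
\]
using $U^q_\Box L^2\hookrightarrow L^\infty L^2$. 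So we may take $C=N$, $C_1=N^\varepsilon$.

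\textbf{Interpolation.} Proposition~\ref{prop:interpolation} then gives, for $V^2_\Box$ inputs, the bound $N^\varepsilon(1+\log N)^2\lesssim N^{2\varepsilon}$. Here I read the roles of $U^2$ and $V^2$ in that proposition as the standard Hadac--Herr--Koch statement: $U^2$ bound with constant $C_1$, $U^q$ bound with constant $C$, concluding a $V^2$ bound with loss $\log^2(C/C_1)$. Finally $Y^0\hookrightarrow V^2_\Box L^2$ by Remark~\ref{rem:embedding}, and renaming $\varepsilon$ finishes the proof.

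\textbf{Main obstacle.} The only delicate point is the $U^q$ step. One must ensure the crude bound survives multilinear extension for general $q>2$ rather than only $q=2$. The $L^\infty L^2$ argument above avoids Proposition~\ref{extension} entirely, so it should be safe, since the sesquilinear structure only requires conjugating one atom.
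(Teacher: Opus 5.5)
Your proposal is correct and follows the paper's argument. You take the free-wave bound from Case~3 of Lemma~\ref{lemv2: Bilinear L2 estimate for projection on diagonal}, extend it to $U^2_\Box L^2$, and apply Proposition~\ref{prop:interpolation} against a crude $U^q$ bound; you rightly read that proposition in the standard Hadac--Herr--Koch direction, from a $U^2$ hypothesis to a $V^2$ conclusion. The resulting $(\log N)^2$ loss is absorbed into $N^\varepsilon$, and you finish with $Y^0\hookrightarrow V^2_\Box L^2$.

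Your $L^\infty_tL^2_x$ plus Bernstein argument for the crude constant $C\approx N$ makes explicit a step the paper leaves unstated. One blemish is shared with the paper: $\widetilde{\mathscr{J}}_{\mathrm{DS}}=\mathscr{J}_{\mathrm{DS}}$ holds only up to the zero mode $2\langle u_1(t),u_2(t)\rangle$, because $m_{\mathrm{DS}}(0,0)=1$. That term is trivially bounded by $\|\phi_1\|_{L^2}\|\phi_2\|_{L^2}$, so it does not affect the result.
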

These three lemmas follow from the free wave estimates in Theorem~\ref{thm:main sharp Strichartz estimate on d-dimensional tori}, Lemma~\ref{lemv2: Bilinear L2 estimate for projection on diagonal} and~\ref{lem:refinedLp}, together with the Galilean transformation, extension and embedding properties in subsection~\ref{subsec:function-space}.   We caution that for \eqref{est:DSbilinear-Y}, we will use interpolation in Proposition~\ref{prop:interpolation} which gains extra $(\ln N)^2$, but that will be absorbed  into the  $N^\varepsilon$  bound.

The following trilinear estimate will play a key role in the proof of well-posedness.
\begin{lem}\label{trilinear-Y}  Consider $u_i$ with frequency $N_i$,  $N_1\geq N_2\geq N_3$. Then we have
	\begin{equation}\label{est:trilinear-Y} \|u_1u_2u_3\|_{L^2_{t,x}([0,1]\times \mathbb{T}^2)}\lesssim N_2^{\frac12}N_3^{\frac12} \prod_{i=1}^3 \|u_i\|_{Y^0}.
	\end{equation}
\end{lem}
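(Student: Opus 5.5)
The plan is to reduce to the case where all three functions live on frequency squares of side $\sim N_2$, and then split the product $u_1u_2$ into its diagonal and off-diagonal parts in the sense of Definition~\ref{def:off-diagonal}. The off-diagonal part will be handled by the refined bilinear $L^p$ bound of Lemma~\ref{bilinear-Y} combined with the linear bound of Lemma~\ref{linear-Y}. The diagonal part will be handled at fixed time by the orthogonality estimate \eqref{estv2:trilinear diagonal orthogonality}.

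\emph{Step 1 (localization).} Cover $\mathbb{Z}^2$ by disjoint squares $C$ of side $N_2$ and write $u_1=\sum_C P_C u_1$. Since $u_2u_3$ has Fourier support in a ball of radius $\lesssim N_2$, the products $P_Cu_1\,u_2u_3$ have boundedly overlapping spatial Fourier supports. By almost orthogonality in $L^2_{t,x}$ and Remark~\ref{remark}, it therefore suffices to prove \eqref{est:trilinear-Y} with $u_1$ replaced by $P_Cu_1$. In that case $u_1$ and $u_2$ are both supported on squares of side $\sim N_2$, and $u_3$ on a square of side $\sim N_3$.

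\emph{Step 2 (off-diagonal part).} Write $u_1u_2=\widetilde{\mathscr J}(u_1,u_2)+\sum_{\pm}\sum_s P_{\ell_\pm^s}u_1\,P_{\ell_\pm^s}u_2$. For the off-diagonal term I will use H\"older with $\frac12=\frac38+\frac18$. Lemma~\ref{bilinear-Y} with $p=8/3>2$ gives
\[\|\widetilde{\mathscr J}(u_1,u_2)\|_{L^{8/3}}\lesssim N_2^{2-3/2}\|u_1\|_{Y^0}\|u_2\|_{Y^0}=N_2^{1/2}\|u_1\|_{Y^0}\|u_2\|_{Y^0}.\]
Lemma~\ref{linear-Y} with $p=8$ gives
\[\|u_3\|_{L^8}\lesssim (N_3^{1/2}+N_3^{3/8})\|u_3\|_{Y^0}\lesssim N_3^{1/2}\|u_3\|_{Y^0}.\]
The product of these two bounds is exactly $N_2^{1/2}N_3^{1/2}$. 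The choice of exponents is dictated by balancing $N_2^{4/q}N_3^{1-4/q}$, which is what one gets from Lemma~\ref{bilinear-Y} in $L^p$ and Lemma~\ref{linear-Y} in $L^q$ with $\frac1p+\frac1q=\frac12$; the value $q=8$ lies in the range where $1-4/q$ is the dominant exponent.

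\emph{Step 3 (diagonal part).} Fix $t$ and a line $\ell=\ell_\pm^s$. The product $P_\ell u_1(t)\,P_\ell u_2(t)$ has Fourier support in the line $\ell_\pm^{2s}$ intersected with a square of side $\sim N_2$. Applying \eqref{estv2:trilinear diagonal orthogonality} with $N=N_2$ and $M=N_3$ (after a harmless Galilean shift for $u_3$'s square, or simply $P_{\le 2N_1}$ bounded below by using Bernstein on a square of side $N_3$) gives
\[\|P_\ell u_1\,P_\ell u_2\,u_3(t)\|_{L^2_x}\lesssim (N_2N_3)^{1/2}\|P_\ell u_1(t)\|_{L^2}\|P_\ell u_2(t)\|_{L^2}\|u_3(t)\|_{L^2}.\]
I then sum over $s$ using only the triangle inequality. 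No orthogonality between different $s$ is needed, because Cauchy--Schwarz gives $\sum_s\|P_{\ell^s}u_1(t)\|\,\|P_{\ell^s}u_2(t)\|\le\|u_1(t)\|_{L^2}\|u_2(t)\|_{L^2}$. Taking $L^2_t([0,1])$ and using the embedding $Y^0\hookrightarrow L^\infty L^2$ from Remark~\ref{rem:embedding} gives the bound $(N_2N_3)^{1/2}\prod\|u_i\|_{Y^0}$.

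I expect the main obstacle to be the diagonal part. The natural approach via orthogonality of outputs across different lines loses a factor $N_3^{1/2}$, because multiplication by $u_3$ smears each output line over $\sim N_3$ neighbouring lines. The point is to avoid this entirely by the $\ell^1$ summation in $s$ described in Step 3. A secondary technical point is ensuring that Step 1's almost orthogonality and the frequency-square hypotheses of Lemmas~\ref{linear-Y} and~\ref{bilinear-Y} match up, with $u_3$ placed on a square of side $N_3\le N_2$, for which the $L^8$ bound uses $N_3$.
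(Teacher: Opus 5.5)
Your proposal is correct and follows the paper's proof essentially step for step. It uses the same localization of $u_1$ to squares of side $N_2$ and the same split of $u_1u_2$ into $\widetilde{\mathscr{J}}(u_1,u_2)$ plus line-diagonal pieces. You then apply H\"older with Lemmas~\ref{bilinear-Y} and~\ref{linear-Y} to the off-diagonal part, and \eqref{estv2:trilinear diagonal orthogonality} with an $\ell^1$ (triangle-inequality) sum in $s$ to the diagonal part, exactly as the paper does.

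The differences are cosmetic. You take the endpoint $p=8/3$, where $N_2^{2-4/p}N_3^{4/p-1}$ equals $N_2^{1/2}N_3^{1/2}$ exactly, instead of ``$p$ close to $2$''. You apply Cauchy--Schwarz in $s$ at fixed time before using $Y^0\hookrightarrow L^\infty L^2$, rather than summing $Y^0$ norms of the $P_{\ell^s}$-pieces. Finally, no Galilean shift is needed for $u_3$, since it is already frequency-localized to $|\xi|\lesssim N_3$.
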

\begin{proof}
	Decompose $\mathbb Z^2 = \bigcup_j  C_j$ into almost disjoint squares with side length $N_2$ and write
	\begin{equation}
		u_1u_2u_3 = \sum_{C_j}  ( P_{C_j}  u_1)u_2u_3.
	\end{equation}
	Their Fourier supports are finitely overlapped, hence we have the almost orthogonality
	\begin{equation}\label{trilinear-orth-decom}
		\| u_1u_2u_3 \|_{L^2_{t,x}} ^2\approx \sum_{C_j} \| ( P_{C_j}  u_1)u_2u_3\|_{L^2_{t,x}}^2.
	\end{equation}
	We use $\mathscr{J}(u_1,u_2)=u_1u_2$ in this proof.
	From Lemma~\ref{linear-Y} and \ref{bilinear-Y}, we get for any $p>2$
	\begin{align}
		\| \widetilde{\mathscr{J}}(P_{C_j}  u_1, u_2)u_3\|_{L^2_{t,x}} & \leq \|\widetilde{\mathscr{J}}(P_{C_j}  u_1, u_2) \|_{L^p_{t,x}} \| u_3 \|_{L^{2p/(p-2)}_{t,x}} \\& \lesssim N_2^{2-4/p} N_3^{4/p-1} \| P_{C_j}u_1\|_{Y^0} \| u_2 \|_{Y^0} \| u_3\|_{Y^0}. \label{est:trilinear off-diagonal}
	\end{align}
	From \eqref{estv2:trilinear diagonal orthogonality} we estimate that
	\begin{align}
		 & \phantom{{}\lesssim{}}\sum_{s\in\mathbb{Z}} \| \mathscr{J}(P_{\ell_{\pm}^{s}} P_{C_j} u_1,P_{\ell_{\pm}^{s}}u_2)u_3\|_{L^2_{t,x}}
		\\& \lesssim  \sum_{s\in\mathbb{Z}} \| \mathscr{J}(P_{\ell_{\pm}^{s}} P_{C_j} u_1,P_{\ell_{\pm}^{s}}u_2)u_3\|_{L^\infty_t L^2_x}
		\\& \lesssim N_2^{1/2}N_3^{1/2}
		\sum_{s\in\mathbb{Z}} \|P_{\ell_{\pm}^{s}}P_{C_j} u_1 \|_{L^\infty_t L^2_x} \| P_{\ell_{\pm}^{s}} u_2 \|_{L^\infty_tL^2_x} \| u_3 \|_{L^\infty_tL^2_x} \\& \lesssim N_2^{1/2} N_3^{1/2} \| P_{C_j} u_1\|_{Y^0} \| u_2\|_{Y^0} \| u_3 \|_{Y^0}.\label{est:trilinear diagonal}
	\end{align}
	The conclusion follows by taking $p$ close to $2$, and combining  estimates \eqref{trilinear-orth-decom}\eqref{est:trilinear off-diagonal} and \eqref{est:trilinear diagonal} .

\end{proof}
\begin{remark}
	If we only use the sharp Strichartz estimate \eqref{est:main sharp Strichartz estimate on d-dimensional tori}
	combining with the finer frequency decomposition to  get that (still assume $N_1\geq N_2\geq N_3$)
	\begin{align}
		\|  u_1u_2u_3\|_{L^2_{t,x}([0,1]\times\mathbb{T}^2)}  \leq & {} \|u_1u_2\|_{L^p_{t,x}([0,1]\times\mathbb{T}^2)} \| u_3 \|_{L^{2p/(p-2)}_{t,x}([0,1]\times\mathbb{T}^2)} \\\lesssim{}&  N_2^{\max\{ 2-\frac 4p, 1-\frac 1p \}}  N_3^{ \max\{ \frac 4p-1,\frac1p\}} \| u_1 \|_{Y^0} \| u_2\|_{Y^0} \| u_3\|_{Y^0},
	\end{align}
	with
	\begin{align}
		\max\Big\{ 2-\frac 4p, 1-\frac 1p \Big\} + \max\Big\{ \frac 4p-1,\frac1p \Big\} = 1 + \Big|\frac 3p-1\Big|.
	\end{align}
	Choosing $p=3$ (corresponds to using $L^6$ Strichartz estimate) we get $N_2^{\frac23}N_3^{\frac13}$ bound.
	This avoids additional derivative loss (match with scaling), yet we still face  the summation problem for $s=2/3$ when working with septic HNLS. Thus it is necessary to get a trilinear improvement. In ~\cite{herr2011}, it was done based on orthogonality in time frequency. Here we exploited the structure of hyperbolic Schr\"{o}dinger equation and splitted the
	off-diagonal part out, which allows us to utilize the dashed lines in Figure~\ref{figure: relation between power index and p} without extra derivative loss, leading to a trilinear improvement.
\end{remark}

\subsection{Estimate for the Duhamel term}
\begin{prop}\label{est: nonlinear Duhamel terms}
	Let $0<T<1$. We have  the following estimates on $\mathbb{T}^2$.
	\begin{enumerate}
		\item For $u_1,\dots,u_7\in X^{2/3}([0,T))$,
		      \begin{equation}
			      \label{estv2:multilinear estimate for 7 order HNLS Duhamel term}
			      \left\| \int_0^t \me^{\mi (t-t')\Box} (u_1\bar u_2u_3\bar u_4u_5\bar u_6u_7) \dif t' \right\|_{X^{2/3}([0,T))} \lesssim \prod_{i=1}^{7} \| u_i\|_{X^{2/3}{([0,T))}}.
		      \end{equation}
		\item For any $s>1/2$ and $u_1,u_2,u_3\in {X^s([0,T))}$,
		      \begin{equation}
			      \label{estv2:multilinear estimate for cubic HNLS Duhamel term}
			      \left\| \int_0^t \me^{\mi (t-t')\Box} (u_1\bar u_2u_3) \dif t' \right\|_{{X^s([0,T))}} \lesssim_{s} \prod_{i=1}^{3} \| u_i\|_{{X^s([0,T))}}.
		      \end{equation}
		\item For any $s>0$ and $u_1,u_2,u_3\in {X^s([0,T))}$,
		      \begin{equation}
			      \label{estv2:multilinear estimate for DS Duhamel term}
			      \left\| \int_0^t \me^{\mi (t-t')\Box} (\mathscr{J}_{\mathrm{DS}}(u_1\bar u_2)u_3) \dif t' \right\|_{{X^s([0,T))}} \lesssim_{s} \prod_{i=1}^{3} \| u_i\|_{{X^s([0,T))}}.
		      \end{equation}
	\end{enumerate}
	Here the  implicit constant does not depend on $T$.
\end{prop}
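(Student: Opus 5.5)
By the duality statement for the Duhamel term (the Proposition from \cite[Proposition 2.10]{herr2011}), each of the three estimates reduces to bounding a multilinear space-time integral. For (1) we must show
\[
\Big|\iint_{[0,T)\times\mathbb T^2} u_1\bar u_2u_3\bar u_4u_5\bar u_6u_7\,\bar v\,\dif x\dif t\Big| \lesssim \|v\|_{Y^{-2/3}}\prod_{i=1}^7\|u_i\|_{X^{2/3}},
\]
and analogously with three factors and $Y^{-s}$ for (2) and (3). We may replace $[0,T)$ by $[0,1)$ using the restriction spaces. We then decompose every function dyadically, $u_i=\sum_{N_i}P_{N_i}u_i$ and $v=\sum_{N_0}P_{N_0}v$. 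After relabelling we order the $u$-frequencies as $N_1\ge N_2\ge\dots$. Frequency support forces $N_0\lesssim N_1$, and if $N_0\ll N_1$ then $N_2\sim N_1$. Complex conjugates play no role, because the estimates in Lemmas~\ref{linear-Y}, \ref{bilinear-Y} and \ref{trilinear-Y} are insensitive to them.

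\textbf{Septic case.} Group the eight factors into two triples, placed in $L^2_{t,x}$ by Cauchy--Schwarz, plus two further factors, placed in $L^\infty_{t,x}$ via Bernstein. Concretely, we pair $(P_{N_0}v)\,u_2\,u_7$ with $u_1u_3u_4$ and bound $u_5,u_6$ by $N_5\|u_5\|_{Y^0}$ and $N_6\|u_6\|_{Y^0}$. Lemma~\ref{trilinear-Y} then yields a bound of the form $(N_2N_7)^{1/2}(N_3N_4)^{1/2}N_5N_6$, possibly with the roles of $N_0$ and $N_1$ swapped. Before applying the lemma to the highest frequency pair we localize $u_1$ and $v$ to squares $C_j$ of side $N_2$ when $N_0\sim N_1$. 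This is the standard Cauchy--Schwarz over cubes argument, after which the almost orthogonality and $\ell^2$-summability of $Y^0$ (Remark~\ref{remark}) handle the sum over $j$. The resulting gain is $\prod_{i\ge2}N_i^{2/3}$ against the losses computed above, i.e.\ the exponents are $1/2+1/2+\dots$ versus $2/3$ each.

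The losses are dictated by scaling: the total number of derivatives equals $\sum_{i\ge 2}$ of $2/3$, which is exactly critical. The summation over the low frequencies $N_3,\dots,N_7$ must therefore be won by an unbalanced distribution. Each factor with exponent $1/2<2/3$ produces a geometric gain, and the factors carrying the full loss $1$ (from Bernstein) are compensated by pairing them with the smallest frequencies, so that factor-wise exponents remain $\le 2/3$ except on the lowest ones. This bookkeeping is the main obstacle. I expect it to require choosing which two factors go into $L^\infty$ (namely the two lowest, $N_6,N_7$), so that the two trilinear blocks carry $(N_2N_3)^{1/2}\cdot(N_4N_5)^{1/2}$. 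One must then check that the remaining weight is summable in all dyadic variables, with the summation in $N_1\sim N_0$ handled by Cauchy--Schwarz. If the exponents balance only marginally, I would instead interpolate Lemma~\ref{trilinear-Y} with the $L^6$-type bound $N_2^{2/3}N_3^{1/3}$ from the remark following it, to obtain a small power gain $(N_{\min}/N_{\max})^\delta$ in some pair.

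\textbf{Cubic and DS cases.} For (2), with $s>1/2$, Lemma~\ref{trilinear-Y} applied to $(P_{N_1}u_1)(P_{N_2}u_2)(P_{N_3}u_3)$ against $P_{N_0}v$ in $L^2\cdot L^2$ gives $(N_2N_3)^{1/2}$, which is beaten by $(N_2N_3)^{s}$. The high-high pair $N_0\sim N_1$ is again handled by square localization and Cauchy--Schwarz. For (3) we instead use Lemma~\ref{DSbilinear-Y}, writing $\mathscr J_{\mathrm{DS}}=\sqrt{\mathscr J_{\mathrm{DS}}}\circ\sqrt{\mathscr J_{\mathrm{DS}}}$ up to a bounded sign multiplier. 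Moving one square root across the pairing turns the integral into
\[
\langle \sqrt{\mathscr J_{\mathrm{DS}}}(u_1,u_2),\ \sqrt{\mathscr J_{\mathrm{DS}}}(v,u_3)\rangle,
\]
and both factors lie in $L^2$ with loss $N^\varepsilon$, after localizing to squares of side comparable to the second-largest frequency. The $N^\varepsilon$ losses are absorbed by $N^{s}$ for any $s>0$, and the summation then closes.
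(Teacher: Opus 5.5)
Your architecture is the paper's. In (1) and (2) you use duality, Littlewood--Paley pieces, Lemma~\ref{trilinear-Y} for the $L^2_{t,x}$ blocks and Bernstein for the remaining factors. In (3) you split $\langle \mathscr J_{\mathrm{DS}}(u_1\bar u_2),v\bar u_3\rangle=\langle S\sqrt{\mathscr J_{\mathrm{DS}}}(u_1\bar u_2),\sqrt{\mathscr J_{\mathrm{DS}}}(v\bar u_3)\rangle$, with $S$ the multiplier with symbol $\operatorname{sgn} m_{\mathrm{DS}}$, and then apply Lemma~\ref{DSbilinear-Y} at scale $N_{\mathrm{med}}$. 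The paper leaves this square-root step implicit; it is why Lemma~\ref{DSbilinear-Y} is stated for $\sqrt{\mathscr J_{\mathrm{DS}}}$. You also correctly pair sesquilinearly, $v\bar u_3$ rather than $vu_3$, which is what makes the second factor off-diagonal. Parts (2) and (3) are fine.

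The gap is the exponent bookkeeping in (1), which is the whole content of the critical septic estimate. The grouping you commit to does not close: $(P_{N_0}v,u_2,u_7)$ and $(u_1,u_3,u_4)$ in $L^2_{t,x}$, with $u_5,u_6$ in $L^\infty_{t,x}$. In $Y^{-2/3}\times(Y^{2/3})^7$ norms its weight is $N_2^{-1/6}N_3^{-1/6}N_4^{-1/6}N_5^{1/3}N_6^{1/3}N_7^{-1/6}$. For $N_2=\dots=N_6=N$ and $N_7=1$ this equals $N^{1/6}$, because the $+1/3$ exponents from Bernstein sit above the $-1/6$ carried by $u_7$.

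The choice you then guess does work, and you should verify it rather than fall back on interpolation. Put $u_6,u_7$ in $L^\infty$ and use blocks such as $(v,u_2,u_4)$ and $(u_1,u_3,u_5)$, as the paper does. The weight is then
\[
\Big(\frac{N_0}{N_1}\Big)^{\frac23}\Big(\frac{N_6}{N_2}\Big)^{\frac16}\Big(\frac{N_6}{N_3}\Big)^{\frac16}\Big(\frac{N_7}{N_4}\Big)^{\frac16}\Big(\frac{N_7}{N_5}\Big)^{\frac16}=\Big(\frac{N_0}{N_1}\Big)^{\frac23}2^{-\frac16(b_1+2b_2+3b_3+4b_4+2b_5)},
\]
where $b_i=\log_2(N_{i+1}/N_{i+2})\ge 0$. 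So it decays geometrically in every consecutive gap. Summing over the intermediate frequencies leaves $\lesssim (N_7/N_2)^{1/6}$, which Schur's test handles. The top pair is handled by Cauchy--Schwarz, whether $N_0\sim N_1$ or $N_0\ll N_1\sim N_2$ with the factor $(N_0/N_1)^{2/3}$. Finally, your extra localization of $u_1,v$ to squares of side $N_2$ is harmless but redundant. Lemma~\ref{trilinear-Y} is already uniform in the top frequency, since its proof performs exactly this localization.
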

\begin{proof}
	For \eqref{estv2:multilinear estimate for 7 order HNLS Duhamel term}, it suffices to show that for any $u_0 \in Y^{-2/3}([0,T))$, we have
	\begin{equation}
		\left| \int_{[0,T)\times \mathbb T^2} u_0\prod_{i=1}^{7}u_i \dif x\dif t \right| \lesssim \| u_0\|_{Y^{-2/3}([0,T))} \prod_{i=1}^{7} \| u_i \|_{X^{2/3}([0,T))}.
	\end{equation}
	We apply Littlewood-Paley decomposition to each $u_i$ to write
	\begin{equation}
		u_i = \sum_{N_i\text{ dyadic}} P_{N_i}u_i = \sum_{N_i\text{ dyadic}} u_{N_i}^{(i)},
	\end{equation}
	hence it suffices to estimate
	\begin{equation}
		\sum_{N_0,\dots,N_{2k+1}} \left| \int_{[0,T)\times \mathbb T^2} u_{N_0}^{(0)}\prod_{i=1}^{7} u_{N_i}^{(i)} \dif x\dif t \right|.
	\end{equation}
	In order to make the integral non-zero, we must have that the two highest frequencies are comparable. Due to symmetry, it's harmless to assume $N_1\geq N_2\geq \dots\geq N_{7}$.
	We use H\"{o}lder's inequality and put more weights on lower frequencies to help summation.
	\begin{equation}
		\left| \int_{[0,T)\times\mathbb{T}^2} u_{N_0}^{(0)} \prod_{i=1}^7 u_{N_i}^{(i)} \dif x\dif t \right| \leq \| u_{N_0}^{(0)}u_{N_2}^{(2)}u_{N_4}^{(4)}\|_{L^2_{t,x}} \| u_{N_1}^{(1)}u_{N_3}^{(3)}u_{N_5}^{(5)} \|_{L^2_{t,x}} \| u_{N_6}^{(6)}  u_{N_7}^{(7)} \|_{L^\infty_{t,x}} .
	\end{equation}
	Applying Lemma~\ref{trilinear-Y}, we get
	\begin{align}
		 & \phantom{{}\lesssim{}}\sum_{N_0,\dots,N_{2k+1}} \left| \int_{[0,T)\times \mathbb T^2} u_{N_0}^{(0)}\prod_{i=1}^{2k+1} u_{N_i}^{(i)} \dif x\dif t \right|                                                                                         \\&
		\lesssim \sum_{N_0,\dots,N_{2k+1}} \min(N_0,N_2)^{\frac 12}N_3^{\frac12}N_4^{\frac12}N_5^{\frac12} N_6N_7\prod_{i=0}^7 \| u_{N_i}^{(i)} \|_{Y^0}                                                                                                    \\&
		\lesssim \sum_{N_0,\dots,N_{2k+1}} \left( \frac{N_0}{N_1} \right)^{\frac 23}N_2^{-\frac16}N_3^{-\frac16}N_4^{-\frac16}N_5^{-\frac16} N_6^{\frac13}N_7^{\frac13} \| u_{N_0}^{(0)} \|_{Y^{-\frac 23}}\prod_{i=1}^7 \| u_{N_i}^{(i)} \|_{Y^{\frac 23}} \\&
		\lesssim\| u_{0} \|_{Y^{-\frac 23}}\prod_{i=1}^7 \| u_{i} \|_{Y^{\frac 23}}.
	\end{align}
	For \eqref{estv2:multilinear estimate for cubic HNLS Duhamel term} we apply Lemma~\ref{trilinear-Y} and get
	\begin{align}
		 & \left| \int_{[0,T)\times \mathbb{T}^2} \bar u_0 u_1\bar u_2u_3 \dif x\dif t\right|\leq \sum_{N_0,N_1,N_2,N_3} \| u_{N_0}^{(0)} \|_{L^2_{t,x}} \| u_{N_1}^{(1)}u_{N_2}^{(2)}u_{N_3}^{(3)} \|_{L^2_{t,x}} \\ \lesssim{} &\sum_{N_0,N_1,N_2,N_3} \left( \frac{N_0}{N_1} \right)^{s} N_2^{\frac12-s}N_3^{\frac12-s} \| u_{N_0}^{(0)} \|_{Y^{-s}} \| u_{N_1}^{(1)} \|_{Y^s} \| u_{N_2}^{(2)} \|_{Y^s} \| u_{N_3}^{(3)} \|_{Y^s}
		\\\lesssim{} &\| u_0 \|_{Y^{-s}} \| u_1 \|_{Y^s} \| u_2 \|_{Y^s} \|u_{3} \|_{Y^s}.
	\end{align}
	For \eqref{estv2:multilinear estimate for DS Duhamel term}, we use similar method to estimate
	\begin{equation}
		\label{equationv2:DS 4 linear integral}
		\sum_{N_0,N_1,N_2,N_3} \left| \int_{[0,T)\times\mathbb{T}^2}  u_{N_0}^{(0)} \mathscr{J}_{\mathrm{DS}}(u_{N_1}^{(1)}u_{N_2}^{(2)})u_{N_3}^{(3)} \dif x\dif t\right|,
	\end{equation}
	where the two highest frequencies are comparable.
	We relabel the frequencies $ \{N_0,N_1, N_2,N_3\}$ as $N_{\mathrm{max}} \approx N_{\mathrm{max}}' \gtrsim N_{\mathrm{med}}\gtrsim N_{\mathrm{min}}$. Directly using \eqref{estv2:epsilon improved bilinear estimate for projection on off-diagnoal} implies the bound
	\begin{equation}
		\sum_{N_0,N_1,N_2,N_3} N_{\text{max}}^\varepsilon \| u_{N_0}^{(0)} \|_{Y^0} \| u_{N_1}^{(1)} \|_{Y^0} \| u_{N_2}^{(2)} \|_{Y^0} \| u_{N_3}^{(3)} \|_{Y^0}.
	\end{equation}
	This can be improved by using finer frequency decomposition. We decompose $\mathbb Z^2 = \bigcup_j  C_j$ into almost disjoint squares with side length $N_{\text{med}}$ and write $u_{N_i}^{(i)} = \sum_{C_j} P_{C_j} u_{N_i}^{(i)}$. The decomposition is nontrivial only for the two highest frequencies. Each integral in \eqref{equationv2:DS 4 linear integral} is non-zero only for $\operatorname{dist}(C_j,C_{j'}) \lesssim N_{\mathrm{med}}$, and is bounded by $$ N_{\text{med}}^\varepsilon \| u_{N_0}^{(0)} \|_{Y^0} \| u_{N_1}^{(1)} \|_{Y^0} \| u_{N_2}^{(2)} \|_{Y^0} \| u_{N_3}^{(3)} \|_{Y^0}.$$
	where we used the bilinear estimate Lemma~\ref{DSbilinear-Y}.
	Thus
	\begin{align*}
		 & \left| \int_{[0,T)\times\mathbb{T}^2} u_0\mathscr{J}_{\mathrm{DS}}(u_1\bar u_2)u_3\dif x\dif t \right| \\ \lesssim{} &\sum_{N_0,N_1,N_2,N_3}
		N_{\mathrm{med}}^{\varepsilon-s} N_{\mathrm{min}}^{-s}  \| u_{N_0}^{(0)} \|_{Y^{-s}} \| u_{N_1}^{(1)} \|_{Y^s} \| u_{N_2}^{(2)} \|_{Y^s} \| u_{N_3}^{(3)} \|_{Y^s}.
	\end{align*}
	Take $0<\varepsilon<s$ and apply Cauchy-Schwarz inequality to summations about $N_{\mathrm{min}}$, $N_{\mathrm{med}}$ and $N_{\mathrm{max}} \approx N_{\mathrm{max}}'$ respectively (notice that they are all dyadic integers), we get the  desired bound.
\end{proof}

\subsection{Contractions}
It suffices to prove the map
\begin{equation}
	u \mapsto \mathcal{I}(u)(t) = \int_0^t \me^{\mi (t-t')\Box} F(u) \dif t'
\end{equation}
is a contraction under the metric $d(u,v) = \| u-v \|_{{X^s([0,T))}}$ for sufficiently small $T$, where
\[ F(u) = |u|^6u,\ s=2/3 \text{ or }F(u) = |u|^2u,\ s>1/2\text{ or }F(u) = \mathscr{J}_{\mathrm{DS}}(|u|^2)u,\ s>0.\]
For convenience, we set $k=k(s)=3,1,1$ respectively. Given initial data $\phi\in H^s(\mathbb{T}^2)$ with $\| \phi \|_{H^s(\mathbb{T}^2)} \leq A$, we choose $\delta$ sufficiently small depending on $A$, and $N$ sufficiently large such that $\| P_{>N} \phi \|_{H^s(\mathbb{T}^2))} \leq \delta$.
For any $u,v$ in the set
\begin{multline*}
	D_\phi : = \{ u \in C([0,T);H^s(\mathbb T^2)) \cap X^s([0,T)) \mid \\
	u(0)=\phi,\ \|u\|_{X^s([0,T))} \leq 2A,\ \|  P_{>N} u \|_{X^s([0,T))} \leq 2\delta
	\},
\end{multline*}
we can decompose
\begin{equation}
	F(u)-F(v) = F_1(u,v) + F_2(u,v),
\end{equation}
where $F_1(u,v)$ is a combination of $u-v,P_{\leq N}u,P_{\leq N}v$, and all terms involving $P_{>N}u,P_{>N}v$ appear in $F_2(u,v)$.
Employing Sobolev embeddings and  \cite[Theorem A.12]{Kenig1993}, we estimate that
\begin{align}
	       & \left\| \int_0^t \me^{\mi (t-t')\Box} F_1(u,v) \dif t' \right\|_{X^s([0,T))} \leq CT \| F_1(u,v)\|_{L^\infty H^s}                                                                                                                                                \\
	\leq{} & CT \left( \| u-v \|_{L^\infty H^s} \left(\| P_{\leq N} u\|_{L^\infty_{t,x}}^{2k} + \| P_{\leq N} v \|_{L^\infty_{t,x}}^{2k}\right) \right.                                                                                                                       \\
	       & \qquad + N^s \| u-v\|_{L_t^{\infty\vphantom{/}} L_{x\vphantom{t}}^{2/(1-s)}} \left.\left (\| P_{\leq N} u\|_{L_t^{\infty\vphantom{/}} L_{x\vphantom{t}}^{4k/s}}^{2k}+ \| P_{\leq N} v \|_{L_t^{\infty\vphantom{/}} L_{x\vphantom{t}}^{4k/s}}^{2k} \right)\right) \\
	\leq{} & CTN^{2k(1-s)} (2A)^{2k}\| u-v \|_{{X^s([0,T))}} .\label{est:contraction-low}
\end{align}
While by Proposition~\ref{est: nonlinear Duhamel terms} we have that
\begin{align}
	       & \left\| \int_0^t \me^{\mi (t-t')\Box} F_2(u,v) \dif t' \right\|_{X^s([0,T))}                   \\ \leq {}&
	C \| u-v\|_{X^s} ( \| P_{>N} u \|_{X^s} + \| P_{>N}v \|_{X^s} )( \| u \|_{X^s} + \| v \|_{X^s} )^{2k-1} \\
	\leq{} & C(2A)^{2k-1}(2\delta) \| u-v\|_{X^s([0,T))}. \label{est:contraction-high}
\end{align}
Hence we get that
\begin{equation}
	\left\| \mathcal{I}(u) -  \mathcal{I}(v) \right\|_{{X^s([0,T))}}
	\leq  \frac{1}{10} \| u-v\|_{{X^s([0,T))}}, \label{est: Picard iteration is contraction}
\end{equation}
provided $\delta$ is chosen sufficiently small depending on $A, k$, and $T$ is chosen sufficiently small depending on $A,N$ and $k$.

Next we verify that $\me^{\mi t\Box}\phi + \mathcal{I}(u) \in D_\phi$ for $u\in D_\phi$. First we notice that
\begin{align}
	\| \mathcal{I}(P_{\leq N} u) \|_{{X^s([0,T))}} & \leq C \| F(P_{\leq N}u ) \|_{L_t^1H^s_x} \leq CT \| P_{\leq N} u \|_{L^\infty_t H^s_x} \| P_{\leq N} u \|_{L^\infty_{t,x}}^{2k} \\& \leq CTN^{2k(1-s)}(2A)^{2k+1} \label{est:Duhamel at low frequency}.
\end{align}
Besides, applying \eqref{est: Picard iteration is contraction} for $v=P_{\leq N}u$ we get
\begin{equation}
	\| \mathcal{I}(u) - \mathcal{I}(P_{\leq N}u ) \|_{{X^s([0,T))}} \leq \frac1{10} \| P_{>N}u\|_{{X^s([0,T))}} \leq \frac\delta5. \label{est: difference of Duhamel and low frequency}
\end{equation}
It's not hard to get the desired estimates for $\mathcal{I}(u)$ and $P_{>N}\mathcal{I}(u)$ from \eqref{est:Duhamel at low frequency} \eqref{est: difference of Duhamel and low frequency}.
\subsection{Ill-posedness} The ill-posedness results are proved by the same idea as in \cite{LZ2025}, based on the fact that the hyperbolic paraboloid contains a vector subspace. Let $V\subset \mathbb{R}^d$ be a vector subspace such that $h(\xi)=0$ for all $\xi\in V$. We denote $V_{N} = V \cap \mathbb{Z}^d \cap [1,N]^d$ and consider the function
\begin{equation}
	\phi_N(x) =  \sum_{\xi\in V_{N}} \frac 1{|\xi|^{2s}}\me^{2\pi\mi x\cdot \xi}, \quad x\in \mathbb{T}^d,\quad s=\frac {\dim V}2.
\end{equation}
It's easy to calculate that $\| \phi_N\|_{H^s(\mathbb{T}^d)} \approx (\log N)^{1/2}$. On the other hand, for $\xi \in V_{ (2k+1)N}$
\begin{align}
	\widehat{|\phi_N|^{2k}\phi_N} (\xi) & = \sum_{\xi_1-\xi_2+\dots+\xi_{2k+1}=\xi} \prod_{i=1}^{2k+1} \frac1{|\xi_i|^{2s}} \gtrsim \frac1{|\xi|^{2s}} \prod_{i=1}^{2k} \sum_{\xi_i \in V_{\frac{|\xi|}{2k+1}}} \frac 1{|\xi_i|^{2s}} \\& \gtrsim \frac{ (\log |\xi|)^{2k}}{|\xi|^{2s}}.
\end{align}

As a consequence
\begin{align}
	\| |\phi_N|^{2k} \phi_N \|_{H^s(\mathbb{T}^d)} \gtrsim \left( \sum_{\xi } \frac{(\log|\xi|)^{4k}}{|\xi|^{2s}}  \right)^{1/2} \gtrsim (\log N)^k \| \phi_N \|_{H^s(\mathbb{T}^d)}^{2k+1}.
\end{align}
It's easy to verify that
\begin{equation}
	\int_0^t \me^{\mi (t-t')\Box} \Big( |\me^{\mi t'\Box}\phi_N|^{2k}\me^{\mi t'\Box} \phi_N \Big) \dif t' = t |\phi_N|^{2k}\phi_N,
\end{equation}
\begin{equation}
	\int_0^t \me^{\mi (t-t')\Box} \Big( \mathscr{J}_{\mathrm{DS}}(|\me^{\mi t'\Box}\phi_N|^{2})\me^{\mi t'\Box} \phi_N \Big) \dif t' =0.
\end{equation}
Hence we see that  the first Picard iteration is unbounded on $H^{1/2}(\mathbb T^2)$ for quintic HNLS and DS with $\sigma_2\neq \gamma/(1+\alpha)$, as well as unboundness on $H^1(\mathbb{T}^4)$ for cubic symmetric HNLS.  This concludes the  proof of ill-posedness results in Theorem~\ref{thm:cauchyProblemHNLS} and ~\ref{thm:LWP for DS}.

\end{document}